\def\standalonechapter{}
\theoremstyle{plain}
\newtheorem{theorem}{Theorem}[section]
\newtheorem{lemma}[theorem]{Lemma}
\newtheorem{proposition}[theorem]{Proposition}
\newtheorem{corollary}[theorem]{Corollary}
\theoremstyle{definition}
\newtheorem{remark}[theorem]{Remark}
\numberwithin{equation}{section}
\begin{document}

\begin{flushright}
\begin{minipage}{0.55\textwidth} 
\small\itshape
This paper opens a series of ten works devoted to reflections on the Millennium Problem.
It generalizes known results on the 3D Navier–Stokes equations based on previous studies.
We hope that the presented material will be useful, and we would be grateful for attention, verification,
and participation in the further development of the topic.
\end{minipage}
\end{flushright}

\vspace{1.5em}

\begin{center}
    {\LARGE\bfseries {Log-free estimate for the resonant paraproduct in the 3D Navier–Stokes equations}\par}
    \vspace{0.6em}
    {\large\bfseries Pylyp Cherevan\par} 
\end{center}

\vspace{1em}

{\small\noindent\textbf{Abstract.}
We consider the resonant paraproduct (high–high $\to$ low regime) in the nonlinearity $(u\!\cdot\!\nabla)u$ of the three-dimensional Navier–Stokes equations. For a sufficiently smooth divergence-free field $u$ we establish the a priori estimate without logarithmic loss
\[
\|R_N(u)\|_{\dot H^{-1}}
\;\lesssim\;
N^{-1}\,\|u\|_{\dot H^{1/2}}\,\|u\|_{\dot H^{1}},
\]
with a constant independent of the dyadic frequency $N$.

The proof relies on phase–geometric integration by parts along an adapted frame, wave–packet discretization at scale $N^{-1/2}$, and an anisotropic Strichartz estimate on time windows of length $N^{-1/2}$. In the wide angular region we apply bilinear decoupling on a rank-3 phase surface; in the geometry under consideration the minimal curvature yields a gain of order $N^{-1/6+o(1)}$ (where $o(1)\to 0$ as $N\to\infty$), sufficient to eliminate the logarithmic loss. The contribution from the narrow region is evaluated separately by an energy argument in $\dot H^{-1}$ using null-form suppression near the interaction diagonal.

The resulting bound is scale-consistent and does not require smallness assumptions; only the divergence-free property of $u$ is used. The work is restricted to a single resonant component of the paraproduct; possible generalizations remain a direction for further study.
}

\tableofcontents
\fi

\newpage

\section*{Introduction}\label{sec:intro}
\addcontentsline{toc}{section}{Introduction}

This work opens a series of ten papers devoted to logarithmically free (log-free) a priori estimates for resonant interactions in the nonlinearity \((u\cdot\nabla)u\) of the three-dimensional Navier--Stokes equations. In this first part we study the high--high\(\to\)low component and establish a scale-consistent estimate without logarithmic loss in the \(\dot H^{-1}\) norm. The precise formulation and notation are given in Section~\ref{sec:main} (see Theorem~\ref{thm:main}); the discussion of scale invariance is in Section~\ref{sec:scaling}, and the scope of applicability and relation to subsequent parts of the series are in Section~\ref{sec:scope}.

\medskip
\noindent\textbf{Idea and proof scheme.}
The proof is organized in a modular way and based on the stitching of three complementary lines at the level of a single dyadic frequency:
\begin{enumerate}[leftmargin=2.2em]
  \item[\textbf{(W)}] \emph{Oscillatory line (wave).} In the wide angular region we perform phase--geometric integration by parts along the adapted frame \((\rho_1,\rho_2,\sigma)\): two angular IBPs in \(\rho_1,\rho_2\) give a gain of \(\lambda^{-1}\), and an independent temporal factor \(\lambda^{-1/2}\) is extracted through a local \(\mathrm{TT}^\ast\) procedure on windows of length \(\lambda^{-1/2}\). Together this yields \(\lambda^{-3/2}\). See Sections~\ref{sec:phase} and~\ref{sec:wp-anis}.
  \item[\textbf{(D)}] \emph{Bilinear decoupling on a rank 3 surface.} After wave-packet discretization and reduction to the level set \(\{\omega=\mathrm{const}\}\), bilinear decoupling is applied. With the benchmark exponent \(\delta(6)=\tfrac14\), the contribution of the wide region decays as \(\lambda^{-11/4}\); in the small minimal curvature regime one allows \(\tfrac16+o(1)\), leading to \(\lambda^{-8/3+o(1)}\). Both regimes remain faster than \(\lambda^{-1}\). See Section~\ref{sec:decoupling}.
  \item[\textbf{(E)}] \emph{Energy line for the narrow region.} In the almost collinear zone \(|\xi+\eta|\le \lambda^{-\delta}\) with \(\delta\in(\tfrac12,\tfrac34)\) we use null--form suppression and the low output frequency, which gives \emph{exactly} \(\lambda^{-1}\) in \(\dot H^{-1}\). This contribution controls the outcome on a single dyadic frequency. See Section~\ref{sec:narrow}.
\end{enumerate}
The merging of contributions and summation over frequencies with orthogonality is carried out in Section~\ref{sec:glue}; temporal globalization is realized by patching on windows of length \(\lambda^{-1/2}\) without deterioration of the frequency exponent.

\medskip
\noindent\textbf{Two representations and the heat \(\leftrightarrow\) wave bridge.}
The main exposition is given in the wave representation \(e^{it|\xi|}\). The passage from the heat multiplier to the wave phase on windows of length \(\lambda^{-1/2}\) leaves a remainder of order \(O(\lambda^{-3/2})\) in \(L^2_t\dot H^{-1}_x\), which is substantially below the target scale and legitimizes the use of the phase--oscillatory technique. The strict form is provided in Appendix~\ref{app:heat}. The anisotropic Strichartz lemma and its derivation via \(\mathrm{TT}^\ast\) are in Appendix~\ref{app:astrichartz}; the full version of decoupling is collected in Appendix~\ref{app:decoupling}; additional clarifications and conventions are in Appendix~\ref{app:dictionary}.

\medskip
\noindent\textbf{Key conventions (used throughout).}
\begin{itemize}[leftmargin=2.2em]
  \item \emph{Resonant block.} Everywhere \(R_N(u)\) is understood as the \emph{symmetrized} high--high\(\to\)low block (see the definition in Section~\ref{sec:main}, formula~\eqref{eq:def-RN}, and also Appendix~\ref{app:dictionary}).
  \item \emph{Determinant of the phase Hessian.} By \(\det A\) we mean the \emph{effective} \(3\times3\) minor in the basis \((\rho_1,\rho_2,\sigma)\) \emph{after} anisotropic renormalization; in this normalization \(\det A\gtrsim 1\) in the wide region. See Section~\ref{sec:phase} and Appendix~\ref{app:dictionary}.
  \item \emph{Mnemonic "IBP\(\times\)3".} This is two frequency IBPs in \(\rho_1,\rho_2\) plus an independent temporal factor from the local anisotropic Strichartz estimate via \(\mathrm{TT}^\ast\) on intervals \(|I|\sim \lambda^{-1/2}\). Integration by parts in time inside the frequency integral is not used. See Section~\ref{sec:wp-anis} and Appendix~\ref{app:dictionary}.
  \item \emph{Pressure.} Compositions of Riesz operators have order zero on \(\dot H^{-1}\); the contribution of \(\nabla p\) obeys the same frequency exponents as \(R_N(u)\). See Section~\ref{sec:narrow} and Appendix~\ref{app:dictionary}.
\end{itemize}

\paragraph*{Note for reading the narrow region.}
In the narrow region $|\tau|=|\xi+\eta|\le\lambda^{-\delta}$ the passage to $\dot H^{-1}$ is not done via an upper bound of the type $\|f\|_{\dot H^{-1}}\lesssim\lambda^\delta\|f\|_{L^2}$, but directly: we use that $|\nabla|^{-1}\nabla\!\cdot$ is a multiplier of order zero on $\dot H^{-1}$ (see \ref{subapp:E-pressure}), while the null–form gives a factor $|\tau|/\lambda$ (see \eqref{eq:null-symbol1}–\eqref{eq:null-suppress}), which compensates the weight $|\tau|^{-1}$ of the $\dot H^{-1}$ norm; hence on one dyadic frequency we obtain exactly $\lambda^{-1}$ (see \eqref{eq:C-null-L2}–\eqref{eq:C-Hminus}). The phrase “IBP×3” means “two angular IBPs + local $TT^*$ on windows $|I_\lambda|=\lambda^{-1/2}$” (and not IBP in time inside the frequency integral). In the globalization of the Strichartz block the exponent $\lambda^{-5/12}$ is used, see \eqref{eq:glue-exponent-fix}.

\medskip
\noindent\textbf{Structure of the work.}
Section~\ref{sec:prelim} fixes notation, frequency and angular filters, and working classes. The main result is formulated in Section~\ref{sec:main}, its scale invariance is discussed in Section~\ref{sec:scaling}, and limitations and further steps are in Section~\ref{sec:scope}. Phase--geometric analysis of the wide region is in Section~\ref{sec:phase}; wave packet decomposition and the anisotropic Strichartz lemma in Section~\ref{sec:wp-anis}. The bilinear decoupling step is given in Section~\ref{sec:decoupling}. The narrow region is treated in Section~\ref{sec:narrow}. Combination of estimates, orthogonality and time patching are in Section~\ref{sec:glue}. Minimal regularity and possible generalizations are discussed in Sections~\ref{sec:min-reg} and~\ref{sec:glue}. Technical supplements are gathered in Appendices~\ref{app:astrichartz}--\ref{app:dictionary}, where unified reading conventions intended for the whole series of papers are also given.

\medskip
\noindent\textbf{Note on the scope.}
Only the resonant high--high\(\to\)low component is considered. The diagonal high--high\(\to\)high regime and other parts of the nonlinearity will be addressed in the subsequent papers of the series. We aimed for a presentation convenient for cross-checking and for subsequent integration of the results into a unified version.

\newpage

\section{Setup and notation}\label{sec:prelim}

\subsection{Function spaces and filters}\label{subsec:spaces-filters}

We work on $\mathbb{R}^3$; passing to the torus $\mathbb{T}^3=(2\pi\mathbb{Z})^3$ does not change the structure of the proof (only the normalization of sums/integrals differs).

\paragraph{Fourier transform and homogeneous norms.}
For $f\in\mathcal{S}(\mathbb{R}^3)$ we fix the normalization
\begin{equation}\label{eq:fourier}
  \widehat{f}(\xi):=\int_{\mathbb{R}^3} e^{-ix\cdot\xi}\,f(x)\,dx, 
  \qquad
  g^\vee(x):=(2\pi)^{-3}\int_{\mathbb{R}^3} e^{ix\cdot\xi}\,g(\xi)\,d\xi,
\end{equation}
so that $(\widehat{g^\vee}=g)$ and $(\widehat{f})^\vee=f$. The homogeneous Sobolev norm is defined by
\begin{equation}\label{eq:Hs}
  \|f\|_{\dot H^s}^2 := \int_{\mathbb{R}^3} |\xi|^{2s}\,|\widehat{f}(\xi)|^2\,d\xi,
  \qquad 
  \dot H^s(\mathbb{R}^3):=\{f\in\mathcal{S}'/\mathcal{P}:\ \|f\|_{\dot H^s}<\infty\}.
\end{equation}
Here $\mathcal{P}$ denotes polynomials, whose quotient removes the low-frequency ambiguity. The parameter $s\in\mathbb{R}$ is fixed as needed.

\paragraph{Dyadic projections (Littlewood–Paley).}
Let $\varphi\in C_c^\infty(\mathbb{R}^3)$ be radial, $\varphi(\xi)\equiv1$ for $|\xi|\le1$, $\varphi(\xi)\equiv0$ for $|\xi|\ge2$, and set $\psi:=\varphi-\varphi(2\cdot)$. For $N=2^k$, $k\in\mathbb{Z}$, define
\begin{equation}\label{eq:LP}
  \widehat{P_{\le N}f}(\xi):=\varphi\!\Big(\tfrac{|\xi|}{N}\Big)\widehat{f}(\xi), 
  \qquad
  \widehat{P_{N}f}(\xi):=\psi\!\Big(\tfrac{|\xi|}{N}\Big)\widehat{f}(\xi).
\end{equation}
Then in $\mathcal{S}'/\mathcal{P}$ one has $f=\sum_{N}P_N f$ and almost-orthogonality holds:
\begin{equation}\label{eq:orthodyadic}
  \langle P_N f,\,P_M g\rangle_{L^2}=0\quad\text{whenever }\ |k-\ell|\ge3,\qquad (N=2^k,\ M=2^\ell).
\end{equation}
The classical equivalence of norms (see, e.g., \textup{[6]}) will be used in the form
\begin{equation}\label{eq:LP-sum}
  \|f\|_{\dot H^s}\asymp \Big(\sum_{N} N^{2s}\,\|P_N f\|_{L^2}^2\Big)^{1/2}.
\end{equation}

\paragraph{Angular decomposition.}
For fixed $N\gg1$ the shell $\{|\xi|\sim N\}$ is covered by a finite family of spherical caps $\{\Theta_{N,\theta}\}_{\theta\in\Lambda_N}$ of aperture $N^{-1/2}$:
\[
  \Theta_{N,\theta}:=\{\xi\in\mathbb{R}^3:\ |\xi|\sim N,\ \angle(\xi,\theta)\le N^{-1/2}\},
  \qquad |\Lambda_N|\asymp N.
\]
Let $\{\chi_{N,\theta}\}_{\theta\in\Lambda_N}$ be a smooth partition of unity subordinate to these caps, supported in doubled cones. Define the angular projection by
\begin{equation}\label{eq:ang-proj}
  \widehat{P_{N,\theta}f}(\xi):=\chi_{N,\theta}(\xi)\,\widehat{P_N f}(\xi).
\end{equation}
Then
\begin{equation}\label{eq:ang-orth}
  \|P_N f\|_{L^2}^2 \asymp \sum_{\theta\in\Lambda_N} \|P_{N,\theta}f\|_{L^2}^2.
\end{equation}
Moreover, the supports of $\chi_{N,\theta}$ have finite overlap: for each $\theta$ only $O(1)$ indices $\theta'$ give nontrivial intersection. If
$\angle(\theta,\theta')\gtrsim c\,N^{-1/2}$ with sufficiently large $c$, then
\[
  \|P_{N,\theta}P_{N,\theta'}\|_{L^2\to L^2}=\mathcal{O}(N^{-M}) \quad \text{for any } M>0.
\]

\paragraph{Conventions.}
We use the standard comparison symbols: $A\lesssim B$ means $A\le C\,B$ with an absolute constant $C>0$ (independent of frequency/time parameters), and $A\gtrsim B$ and $A\asymp B$ have the obvious meanings. The notation $N\sim\lambda$ permits passing from the discrete dyadic frequency $N$ to a “continuous” parameter $\lambda$; when convenient we write $\lambda=|\xi|\asymp N$. The inner product is denoted by $\langle\cdot,\cdot\rangle$ (the context $L^2$ or $\dot H^{-1}$ will be specified explicitly). The angle between vectors $\zeta,\theta\in\mathbb{R}^3\setminus\{0\}$ is defined as
$\displaystyle \angle(\zeta,\theta):=\arccos\!\Big(\frac{\zeta\cdot\theta}{|\zeta|\,|\theta|}\Big)$, with $\theta\in\mathbb{S}^2$.

\paragraph{Remarks on usage.}
The angular aperture $N^{-1/2}$ is consistent with the geometry of wave packets and the tile–packet discretization used below: the transverse scale is $\sim N^{-1/2}$ and the longitudinal one is $\sim N^{-1}$ (see \S\ref{sec:wp-anis} and \S\ref{sec:decoupling}). The equivalences \eqref{eq:LP-sum}–\eqref{eq:ang-orth} are used without further mention.

\subsection{Initial data regularity}\label{subsec:init}

Throughout we consider a divergence-free initial field
\begin{equation}\label{eq:initial-regularity}
  u_0:\,\mathbb{R}^3\to\mathbb{R}^3,\qquad \nabla\!\cdot u_0=0,\qquad 
  u_0\in \dot H^{s}(\mathbb{R}^3),\quad s>\tfrac{5}{2}.
\end{equation}
This choice of $s$ launches the local existence theory and provides sufficient finiteness of the working norms. In particular, by the Koch–Tataru local theory there exists $T>0$ and a Navier–Stokes solution
\begin{equation}\label{eq:solution-class}
  u \in C\big([0,T];\dot H^{s}\big)\cap L^2\big([0,T];\dot H^{s+1}\big), 
  \qquad \nabla\!\cdot u\equiv 0,
\end{equation}
generated by the data \eqref{eq:initial-regularity} (see, e.g., \cite{koch2001navier}; cf. also the survey \cite{bahouri2011fourier}).

\medskip
\noindent\textbf{Working norms.}
For subsequent estimates we fix the shorthand (see also \S\ref{sec:main}):
\begin{equation}\label{eq:Xsigma}
  \|u\|_{X_{\sigma}([0,T])}:=\sup_{t\in[0,T]}\|u(t)\|_{\dot H^{\sigma}}, 
  \qquad \sigma\in\Big\{\tfrac12,\,1\Big\}.
\end{equation}
The main a priori bound (Theorem~\ref{thm:main}) is written in terms of $\|u\|_{X_{1/2}}$ and $\|u\|_{X_{1}}$ and is therefore \emph{scale-consistent} (see \S\ref{sec:scaling}).

\medskip
\noindent\textbf{Two remarks.}
\begin{itemize}[leftmargin=2.2em]
  \item \emph{A priori nature.} The threshold $s>\tfrac{5}{2}$ is used only to separate existence/regularity issues from the purely a priori part: the proof of the log-free estimate relies solely on the finiteness of the norms $\dot H^{1/2}$ and $\dot H^{1}$ on $[0,T]$. Thus formula \eqref{eq:main-bound} is valid for any solution satisfying \eqref{eq:Xsigma}.
  \item \emph{Periodic case.} All definitions and derivations transfer to the torus $\mathbb{T}^3=(2\pi\mathbb{Z})^3$ without changing the structure of the proof; only the normalization of sums/integrals differs.
\end{itemize}

\subsection{Working hypotheses and functional classes}\label{subsec:hyp}

Throughout we work on $\Omega=\mathbb{R}^3$ with normalized viscosity $\nu=1$; passing to the torus $\mathbb{T}^3$ changes only the normalization of sums/integrals and does not affect the proof scheme. Solutions are assumed divergence-free. For convenience we fix in advance a set of conventions and functional classes to which we will repeatedly refer below.

\paragraph{Class of solutions on $[0,T]$.}
Let $s>\tfrac{5}{2}$ and initial data $u_0\in\dot H^{s}$, $\nabla\!\cdot u_0=0$. By the local theory \cite{koch2001navier} there exists $T>0$ and a (vector-valued) velocity field
\begin{equation}\label{eq:solution-class-hyp}
  u\in C\!\big([0,T];\dot H^{s}\big)\cap L^2\!\big([0,T];\dot H^{s+1}\big), 
  \qquad \nabla\!\cdot u\equiv0,
\end{equation}
generated by $u_0$. In what follows we use only the finiteness of the working norms
\begin{equation}\label{eq:Xsigma-again}
  \|u\|_{X_{\sigma}([0,T])}:=\sup_{t\in[0,T]}\|u(t)\|_{\dot H^{\sigma}}, \qquad \sigma\in\Big\{\tfrac12,1\Big\},
\end{equation}
see also~\eqref{eq:Xsigma}. All conclusions in \S\ref{sec:phase}--\S\ref{sec:glue} are a priori and do not depend on the specific construction mechanism of \eqref{eq:solution-class-hyp}.

\paragraph{Time windows and scale cylinders.}
For a dyadic frequency $\lambda\gg1$ we cover $[0,T]$ by intervals of length $\lambda^{-1/2}$:
\begin{equation}\label{eq:I-lambda}
  I_{\lambda,j}:=\big[t_j,\,t_j+\lambda^{-1/2}\big],\qquad t_j:=j\,\lambda^{-1/2},\quad j=0,1,\dots,J_\lambda-1,\quad J_\lambda:=\lceil T\lambda^{1/2}\rceil.
\end{equation}
The space–time local ball (sufficient for our purposes; see also \S\ref{sec:wp-anis}) is defined by
\begin{equation}\label{eq:Q-lambda}
  Q_\lambda(t_0,x_0):=\big[t_0,t_0+\lambda^{-1/2}\big]\times B_{\lambda^{-1/2}}(x_0).
\end{equation}
It is precisely on such windows that the local anisotropic Strichartz estimate $L^2_x\to L^6_{t,x}$ (for short — “(6,2)”; see \S\ref{sec:wp-anis} and App.~\ref{app:astrichartz}) is applied.

\paragraph{Localized evolution norms.}
For an angular cap $\Theta_{\lambda,\theta}$ and an interval $I\subset\mathbb{R}$ we set
\begin{equation}\label{eq:Y-local}
  \|u\|_{Y_{\lambda,\theta}(I)}\,:=\,\big\|S(\cdot)\,P_{\lambda,\theta}u\big\|_{L^6_{t,x}(I\times\Omega)},\qquad 
  S(t)f:=\mathcal{F}^{-1}\!\big(e^{it|\xi|}\widehat{f}\big).
\end{equation}

Here $(S(\cdot)P_{\lambda,\theta}u)(t):=e^{it|D|}\big(P_{\lambda,\theta}u(t)\big)$, i.e. the operator is applied to each time slice.

In \S\ref{sec:wp-anis} (see also App.~\ref{app:astrichartz}) it will be shown that for $|I|=\lambda^{-1/2}$ one has
\begin{equation}\label{eq:local-stri}
  \|u\|_{Y_{\lambda,\theta}(I)}\ \lesssim\ \lambda^{-1/2+}\,\|P_{\lambda,\theta}u\|_{L^\infty_tL^2_x}, 
\end{equation}
where the notation $\alpha+$ means $\alpha+\varepsilon$ with arbitrarily small $\varepsilon>0$ (constants are allowed to depend on~$\varepsilon$). The global-in-time version is obtained by patching via~\eqref{eq:I-lambda} (see \S\ref{sec:glue}).

\paragraph{Narrow region parameter.}
In \S\ref{sec:narrow} we use the almost-collinear region
\begin{equation}\label{eq:narrow-set}
  \mathcal{N}_{\lambda,\delta}:=\big\{(\xi,\eta):\ |\xi|\sim|\eta|\sim\lambda,\ \ |\xi+\eta|\le \lambda^{-\delta}\big\},\qquad \delta\in\Big(\tfrac12,\tfrac34\Big),
\end{equation}
with the preferred choice $\delta=\tfrac23$. All estimates below will be independent of the specific $\delta$ within this range.

\paragraph{Commutator rules and basic references.}
By default we use the standard product/commutator rules in homogeneous Sobolev scales and Bernstein inequalities on caps/coronae; details and precise statements can be found, e.g., in \cite{bahouri2011fourier,Grafakos2009}. Riesz operators are bounded on $\dot H^{-1}$, which allows pressure to be handled without altering the frequency exponent (see \S\ref{sec:narrow}).

\paragraph{Notation and constants.}
The notation $A\lesssim B$ means $A\le C\,B$ with an absolute constant $C>0$, independent of $\lambda$, the angle $\theta$, the window index $j$ and the parameter $\delta$. The notations $A\gtrsim B$ and $A\asymp B$ have the obvious meaning. In formulas such as $\lambda^{\alpha+}$ one is allowed to fix an arbitrarily small $\varepsilon>0$; dependence of constants on $\varepsilon$ is not specified.

\medskip
We restrict ourselves to this system of conventions. All further references to localized norms, windows~\eqref{eq:I-lambda} and the narrow region~\eqref{eq:narrow-set} will refer back to this subsection. Additional normalizations, if necessary, are collected in the dictionary in App.~\ref{app:dictionary}.

\newpage

\section{Main result}\label{sec:main}

\subsection{Theorem: log-free bound}\label{subsec:main-theorem}

For a dyadic frequency $N\in 2^{\mathbb{Z}}$ define the \emph{symmetrized} resonant interaction block of type high--high$\to$low by
\begin{equation}\label{eq:def-RN}
  R_N(u)
  := P_N\!\left[
      (u\cdot\nabla)u
      - \sum_{M\le N/8}\Big\{
          (P_M u\cdot\nabla)(P_N u)
          + (P_N u\cdot\nabla)(P_M u)
        \Big\}
    \right],
\end{equation}
where $P_N$ and $P_M$ are the dyadic projections fixed in~\S\ref{subsec:spaces-filters}, and the field $u$ is divergence-free, $\nabla\!\cdot u\equiv0$. This choice removes both low--high paraproducts and leaves precisely the resonant high--high$\to$low remainder used below.

\begin{theorem}[Log-free estimate for the resonant block]\label{thm:main}
Let $u:[0,T]\times\mathbb{R}^3\to\mathbb{R}^3$ be a divergence-free solution with
\(
u\in C([0,T];\dot H^{1/2}\cap \dot H^{1})
\)
(see also~\S\ref{subsec:init}, \S\ref{subsec:hyp}).
Then for each $N\in 2^{\mathbb{Z}}$ and all $t\in[0,T]$ one has
\begin{equation}\label{eq:main-bound}
  \bigl\|R_N(u)(t)\bigr\|_{\dot H^{-1}}
  \;\lesssim\;
  N^{-1}\,\|u(t)\|_{\dot H^{1/2}}\,\|u(t)\|_{\dot H^{1}}.
\end{equation}
In integral form,
\begin{equation}\label{eq:main-bound-global}
  \bigl\|R_N(u)\bigr\|_{L^\infty_t\dot H^{-1}_x([0,T]\times\mathbb{R}^3)}
  \;\lesssim\;
  N^{-1}\,\|u\|_{X_{1/2}([0,T])}\,\|u\|_{X_{1}([0,T])},
\end{equation}
where $X_\sigma$ is given in~\eqref{eq:Xsigma}.
The constant is independent of $N$ and $T$ and depends on $u$ only through the indicated norms.
\end{theorem}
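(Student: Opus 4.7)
The plan is to dualize against a test field $\varphi$ of dyadic frequency $\sim N$ in $\dot H^{1}$ and estimate the resulting trilinear form on the interaction set $\{|\xi|\sim|\eta|\sim N,\ |\xi+\eta|\ll N\}$. I would split this set into the \emph{wide} region $\{|\xi+\eta|>\lambda^{-\delta}\}$ and the \emph{narrow} region $\mathcal{N}_{\lambda,\delta}$ of \eqref{eq:narrow-set} with $\delta\in(\tfrac12,\tfrac34)$ (preferred choice $\delta=\tfrac23$), and estimate each on a single dyadic shell $\lambda=N$ before combining through the $L^\infty_t$ evaluation at an arbitrary fixed $t$. The divergence-free projection of $(u\cdot\nabla)u$ is handled implicitly because the Leray projector is of order zero on $\dot H^{-1}$, so that the pressure contribution obeys the same frequency exponent as $R_N(u)$ itself.

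For the wide piece I would first pass from the heat multiplier to the wave multiplier on each time window $I_{\lambda,j}$ of \eqref{eq:I-lambda}: by the bridge of App.~\ref{app:heat}, replacing $e^{-t|\xi|^2}$ by $e^{it|\xi|}$ introduces an error of size $O(\lambda^{-3/2})$ in $L^2_t\dot H^{-1}_x$, strictly below the target. After the angular decomposition \eqref{eq:ang-proj} into caps of aperture $\lambda^{-1/2}$ and the wave-packet tile discretization at scales $\lambda^{-1/2}\times\lambda^{-1}$ of \S\ref{sec:wp-anis}, I would work in the adapted frame $(\rho_1,\rho_2,\sigma)$ of \S\ref{sec:phase}, in which the effective phase Hessian of $\omega(\xi,\eta)=|\xi|+|\eta|-|\xi+\eta|$ is renormalized so that $\det A\gtrsim 1$. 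Two angular IBPs in $\rho_1,\rho_2$ produce a symbol gain of $\lambda^{-1}$, and the local anisotropic Strichartz estimate \eqref{eq:local-stri} contributes an independent $\lambda^{-1/2}$ via $TT^*$ on $|I|=\lambda^{-1/2}$: this is the \emph{IBP$\times 3$} of step (W). On top, bilinear decoupling on the rank-$3$ level set $\{\omega=\text{const}\}$ of \S\ref{sec:decoupling} yields $\lambda^{-11/4}$ in the benchmark regime $\delta(6)=\tfrac14$ and $\lambda^{-8/3+o(1)}$ in the minimal-curvature regime --- in either case strictly below the target $\lambda^{-1}$.

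For the narrow piece, divergence-freeness reveals the null-form structure: the symbol of $P_N[(u\cdot\nabla)u]$ near the almost-collinear diagonal $\xi+\eta=\tau$ is controlled by $|\tau|/\lambda$, see \eqref{eq:null-symbol1}--\eqref{eq:null-suppress}. A Cauchy--Schwarz/Bernstein estimate on caps produces the factor $|\tau|/\lambda$ in $L^2_x$; the passage to $\dot H^{-1}$ is carried out not via the crude bound $\|f\|_{\dot H^{-1}}\lesssim\lambda^\delta\|f\|_{L^2}$, but through the Riesz composition $|\nabla|^{-1}\nabla\cdot$, which is of order zero on $\dot H^{-1}$ and absorbs the weight $|\tau|^{-1}$ against the null factor $|\tau|/\lambda$, see \eqref{eq:C-null-L2}--\eqref{eq:C-Hminus}. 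Balancing the two factors of $u$ through the bilinear H\"older pairing $\dot H^{1/2}\times\dot H^{1}$ gives exactly the desired $N^{-1}\|u\|_{\dot H^{1/2}}\|u\|_{\dot H^1}$, uniformly in $\delta\in(\tfrac12,\tfrac34)$.

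Final assembly of \eqref{eq:main-bound} is then direct: the wide and narrow contributions are added at scale $\lambda=N$, time globalization via \eqref{eq:I-lambda} costs at most $\lambda^{-5/12}$ per \eqref{eq:glue-exponent-fix} (absorbed by the surplus from the wide piece), and dyadic $\ell^2$-almost-orthogonality \eqref{eq:orthodyadic} together with the finite overlap of caps \eqref{eq:ang-orth} close the estimate; \eqref{eq:main-bound-global} follows by taking $L^\infty_t$ through the pointwise bound. The main obstacle I anticipate is the uniform lower bound $\det A\gtrsim 1$ in the wide region after anisotropic renormalization, together with verifying that the minimal-curvature decoupling gain of $\tfrac16+o(1)$ survives against the cap count $|\Lambda_N|\asymp N$; a secondary subtlety is ensuring that the temporal patching on windows $|I|=\lambda^{-1/2}$ preserves the frequency exponent for arbitrary $T$, which is precisely what the $\lambda^{-5/12}$ reserve is designed to absorb.
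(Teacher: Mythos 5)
Your proposal follows essentially the same route as the paper: the wide/narrow split at $|\xi+\eta|\sim\lambda^{-\delta}$, the heat$\to$wave bridge on windows $|I|=\lambda^{-1/2}$, two angular IBPs plus the local anisotropic $TT^{*}$ Strichartz estimate and rank-3 bilinear decoupling giving $\lambda^{-11/4}$ (or $\lambda^{-8/3+o(1)}$) in the wide region, the null-form factor $|\tau|/\lambda$ combined with the order-zero $|\nabla|^{-1}\nabla\cdot$ passage to $\dot H^{-1}$ giving exactly $\lambda^{-1}$ in the narrow region, and the final assembly via the minimum of the two contributions, temporal patching with the $\lambda^{-5/12}$ reserve, and dyadic almost-orthogonality. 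This matches the paper's proof scheme (W)/(D)/(E) and its glue step, so no further comparison is needed.
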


\begin{remark}[Periodic case]\label{rem:torus}
All statements transfer to the torus $\mathbb{T}^3=(2\pi\mathbb{Z})^3$ upon replacing integrals by finite sums; the only difference is in normalization. 
\end{remark}

\begin{remark}[Context]\label{rem:context}
Estimate~\eqref{eq:main-bound} removes the logarithmic loss typical for classical bilinear commutator inequalities (cf.~\cite{KatoPonce1988}); here it is achieved by a combination of phase--geometric integration by parts, local anisotropic Strichartz estimates, and bilinear decoupling on a rank~3 surface (see~\S\ref{sec:phase}--\S\ref{sec:decoupling}).
\end{remark}

\subsection{Scale invariance}\label{sec:scaling}

The parabolic scaling of the Navier--Stokes equations
\(
u_\lambda(t,x):=\lambda\,u(\lambda^2 t,\lambda x),\ \lambda>0,
\)
induces the transformation of norms
\begin{equation}\label{eq:scaling-norm}
  \|u_\lambda(t)\|_{\dot H^\sigma}=\lambda^{\sigma-\frac12}\,\|u(\lambda^2 t)\|_{\dot H^\sigma},\qquad \sigma\in\mathbb{R}.
\end{equation}
For the resonant block (taking into account the rescaling of the frequency index) one has
\begin{equation}\label{eq:scaling-RN}
  \bigl\|R_N(u_\lambda)(t)\bigr\|_{\dot H^{-1}}
  \;=\;
  \lambda^{1/2}\,\bigl\|R_{N/\lambda}\bigl(u\bigr)(\lambda^2 t)\bigr\|_{\dot H^{-1}}.
\end{equation}
The right-hand side of~\eqref{eq:main-bound} scales in exactly the same way:
\[
  N^{-1}\,\|u_\lambda(t)\|_{\dot H^{1/2}}\,\|u_\lambda(t)\|_{\dot H^{1}}
  \;=\;
  \lambda^{1/2}\,(N/\lambda)^{-1}\,
  \|u(\lambda^2 t)\|_{\dot H^{1/2}}\,
  \|u(\lambda^2 t)\|_{\dot H^{1}}.
\]
Therefore, \eqref{eq:main-bound} and \eqref{eq:main-bound-global} are strictly scale-consistent.

\subsection{Limitations and further steps}\label{sec:scope}

\begin{enumerate}[leftmargin=2.2em,label=(\roman*)]
  \item \emph{Component coverage.} In this part only the resonant paraproduct of type high--high$\to$low in the form~\eqref{eq:def-RN} is considered. The diagonal high--high$\to$high regime and the mixed low--high/high--low blocks remain outside the scope and will be treated separately.

  \item \emph{No smallness assumptions.} The estimate is a priori and requires only the finiteness of the norms $\dot H^{1/2}$ and $\dot H^{1}$ on $[0,T]$; no smallness assumptions are used.

  \item \emph{Heat\,$\leftrightarrow$\,wave.} The technical exposition is given in the “wave” representation; the correctness of replacing the heat multiplier by the oscillatory phase on windows of length $N^{-1/2}$ is established in App.~\ref{app:heat} in a strict form (remainder $O(N^{-3/2})$ in $L^2_t\dot H^{-1}_x$), which is below the target scale.

  \item \emph{Narrow region parameter.} In the analysis of the almost-collinear region $|\xi+\eta|\le N^{-\delta}$ one may use any $\delta\in(\tfrac12,\tfrac34)$; the convenient choice $\delta=\tfrac23$ simplifies counting (see~\S\ref{sec:narrow}). The outcome on a single dyadic frequency is $N^{-1}$ and does \emph{not} depend on the specific $\delta$ within the indicated range.

  \item \emph{Bilinear decoupling.} In the wide region we apply $\varepsilon$-free decoupling on a rank~3 surface (see~\S\ref{sec:decoupling}); the benchmark exponent $\delta(6)=\tfrac14$ is used, and in the case of weak minimal curvature the exponent $\tfrac16+o(1)$ is admissible (cf.~\cite{GuthIliopoulouYang2024}). In both cases the wide region decays faster than $N^{-1}$ and does not determine the final scale.

  \item \emph{Pressure and external forces.} After Leray projection the contribution of pressure is scale-consistent with the right-hand side of~\eqref{eq:main-bound} (see~\S\ref{sec:narrow}). External forces are not considered in this part.

  \item \emph{Relation to the literature.} Commutator estimates of Kato--Ponce type \cite{KatoPonce1988} motivate the formulation of the problem without logarithmic loss; the Fourier and paraproduct analysis tools used here can be found, e.g., in~\cite{bahouri2011fourier,Grafakos2009}.
\end{enumerate}

\medskip
In the subsequent sections we will consecutively prove Theorem~\ref{thm:main}: phase--geometric analysis of the wide region (\S\ref{sec:phase}), wave packet decomposition and the anisotropic Strichartz lemma (\S\ref{sec:wp-anis}), the bilinear decoupling step (\S\ref{sec:decoupling}), the energy estimate for the narrow region (\S\ref{sec:narrow}), and merging with frequency and time summation (\S\ref{sec:glue}).

\newpage

\section{Phase–geometric analysis}\label{sec:phase}

\subsection{Interaction phase and basic properties}\label{subsec:phase-basics}
Introduce the bilinear phase
\begin{equation}\label{eq:phase-omega}
  \omega(\xi,\eta)\ :=\ |\xi|+|\eta|-|\xi+\eta|\,,\qquad (\xi,\eta)\in\mathbb{R}^3\times\mathbb{R}^3,
\end{equation}
and set
\begin{equation}\label{eq:tau-def}
  \tau:=\xi+\eta,\qquad \widehat{\tau}:=\tau/|\tau|\quad(\tau\ne0).
\end{equation}
The geometry of the “wide” region is fixed by the condition
\begin{equation}\label{eq:wide-zone}
  |\xi|\sim|\eta|\sim\lambda\gg1,\qquad \theta:=\angle(\xi,\eta)\ \gtrsim\ \lambda^{-1/2},
\end{equation}
while the “narrow” region $|\tau|\le \lambda^{-\delta}$ will be analyzed separately in Section~\ref{sec:narrow}.

\paragraph{Homogeneity and symmetry.}
The function $\omega$ is homogeneous of degree $1$ and symmetric:
\[
  \omega(\lambda\xi,\lambda\eta)=\lambda\,\omega(\xi,\eta),\quad
  \omega(\xi,\eta)=\omega(\eta,\xi),\quad
  \omega(-\xi,-\eta)=\omega(\xi,\eta).
\]

\paragraph{Gradients and second derivatives.}
For derivatives in $\xi$ and $\eta$ one has
\begin{equation}\label{eq:grad-omega}
  \nabla_\xi\omega(\xi,\eta)=\frac{\xi}{|\xi|}-\frac{\tau}{|\tau|},\qquad
  \nabla_\eta\omega(\xi,\eta)=\frac{\eta}{|\eta|}-\frac{\tau}{|\tau|}.
\end{equation}
Away from the almost-collinear diagonal ($\theta\not\in\{0,\pi\}$) the gradients are nonzero.
The Hessian in $(\xi,\eta)$ has rank $3$ in the wide region~\eqref{eq:wide-zone}; this fact will be
stated quantitatively in Lemma~\ref{lem:detA} below.

\begin{remark}[On variables]\label{rem:tau-notation}
Throughout, the \emph{time} variable is denoted by $t$, and the sum of frequencies by $\tau=\xi+\eta$.
Notation such as “$t:=\xi+\eta$” inside frequency calculations is not used; we strictly adhere to the
convention~\eqref{eq:tau-def} to avoid ambiguities when differentiating along the radial direction
$\widehat{\tau}$ (see also App.~\ref{app:dictionary}).
\end{remark}

\subsection{Adapted frame and angular directions}\label{subsec:phase-directions}
Assume~\eqref{eq:wide-zone}. Set
\begin{equation}\label{eq:frame}
  e:=\frac{\xi-\eta}{|\xi-\eta|},\qquad
  \rho_1:=\frac{e-(e\cdot\widehat{\tau})\,\widehat{\tau}}{\bigl|e-(e\cdot\widehat{\tau})\,\widehat{\tau}\bigr|},
  \qquad
  \rho_2:=\widehat{\tau}\times \rho_1\,.
\end{equation}
Then $\{\rho_1,\rho_2,\widehat{\tau}\}$ is an orthonormal frame adapted to the interaction geometry:
$\rho_1,\rho_2\perp \widehat{\tau}$ are the \emph{angular} directions, and $\widehat{\tau}$ plays the role
of the \emph{longitudinal difference} direction.

In what follows we apply integration by parts \emph{only} in the angular directions
$\rho_1,\rho_2$ (two IBPs), whereas the temporal gain of order $\lambda^{-1/2}$ is extracted
in Section~\ref{sec:wp-anis} via a local anisotropic Strichartz estimate through
$TT^\ast$ on windows $|I|\sim\lambda^{-1/2}$. Thus the mnemonic “IBP$\times$3” means
“two frequency IBPs $+$ an independent temporal factor” (see also App.~\ref{app:dictionary}).

\subsection{Effective Hessian and determinant estimate}\label{subsec:phase-det}
Consider the second derivatives of the phase $\omega$ in the frame~\eqref{eq:frame}. Define
\begin{equation}\label{eq:Aeff-def}
  A_{\mathrm{eff}}(\xi,\eta)\ :=\
  \Bigl[\partial^2_{v_i v_j}\,\omega(\xi,\eta)\Bigr]_{v_i,v_j\in\{\rho_1,\rho_2,\sigma\}},
  \qquad \sigma:=\tau\mathbf{b}\,,
\end{equation}
that is, $A_{\mathrm{eff}}$ is the \emph{effective $3\times3$ minor} of the Hessian in two angular and one
longitudinal directions.\footnote{We deliberately work with $A_{\mathrm{eff}}$ rather than with the “raw”
$6\times6$ Hessian in all coordinates $(\xi,\eta)$; this avoids ambiguity of notation and matches the geometry
actually used in the proof.}
It is also convenient to use the anisotropic renormalization
\begin{equation}\label{eq:anisorenorm}
  S_\lambda:=\mathrm{diag}(\lambda^{1/2},\lambda^{1/2},1),\qquad
  A_e:=S_\lambda\,A_{\mathrm{eff}}\,S_\lambda\, .
\end{equation}

\begin{lemma}[Block structure and radial component]\label{lem:block}
In the wide region~\eqref{eq:wide-zone} the matrix $A_{\mathrm{eff}}$ has a block form
\begin{equation}\label{eq:block}
  A_{\mathrm{eff}}=
  \begin{pmatrix}
    B & C\\[2pt]
    C^{\!\top} & D
  \end{pmatrix},\qquad
  B\in\mathbb{R}^{2\times2},\ C\in\mathbb{R}^{2\times1},\ D\in\mathbb{R},
\end{equation}
with size estimates
\[
  B\ \simeq\ \lambda^{-1},\qquad \|C\|\ \simeq\ \lambda^{-1/2},\qquad D_{\mathrm{rad}}:=\partial^2_{\sigma\sigma}\omega\equiv 0.
\]
In particular, the nonzero “longitudinal mass” arises via the Schur complement:
\[
  D_{\mathrm{eff}}\ :=\ D - C^{\!\top}B^{-1}C\ \simeq\ 1 .
\]
\end{lemma}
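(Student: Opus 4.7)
The plan is to start from the first-order formulas \eqref{eq:grad-omega} and differentiate once more. Using the standard identity \(\partial_v(\xi/|\xi|)=|\xi|^{-1}(I-\hat\xi\hat\xi^{\!\top})v\), the Hessian of each norm \(|\xi|\), \(|\eta|\), \(|\xi+\eta|\) is a rescaled transverse projector, so \(H\omega\) assembles as a combination of \(|\xi|^{-1}P_{\xi^\perp}\), \(|\eta|^{-1}P_{\eta^\perp}\) and \(|\tau|^{-1}P_{\tau^\perp}\) with appropriate signs across the diagonal and off-diagonal blocks. The effective \(3\times3\) minor \(A_{\mathrm{eff}}\) in the frame \((\rho_1,\rho_2,\sigma)\) is then obtained by evaluating these three projector contributions on the angular directions \(\rho_1,\rho_2\) and the longitudinal direction \(\sigma=\widehat\tau\).

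The second step is to plug in the wide-region geometry. The frame \eqref{eq:frame} is constructed so that \(\rho_2\) is orthogonal to the plane \(\mathrm{span}(\xi-\eta,\tau)\); hence \(\hat\xi\cdot\rho_2=\hat\eta\cdot\rho_2=0\) and \(P_{\xi^\perp}\rho_2=P_{\eta^\perp}\rho_2=\rho_2\). This forces \(\rho_2^{\!\top}H\omega\,\rho_2\asymp|\xi|^{-1}+|\eta|^{-1}\simeq\lambda^{-1}\), which fixes the scale of \(B\). Writing \(\hat\xi=\sin\alpha\,\rho_1+\cos\alpha\,\sigma\) and analogously for \(\hat\eta\) in the \((\rho_1,\sigma)\)-plane, with \(\sin\alpha,\sin\beta\asymp1\) under \(\theta\gtrsim\lambda^{-1/2}\), an elementary expansion of the three projectors on \(\rho_1\) and \(\sigma\) delivers the remaining entries at the claimed orders: the off-diagonal \(\|C\|\simeq\lambda^{-1/2}\), and the radial identity \(D_{\mathrm{rad}}=\sigma^{\!\top}H\omega\,\sigma\equiv0\). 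The identity follows because \(P_{\tau^\perp}\sigma=0\) removes the \(-|\tau|^{-1}\) contribution identically, while the remaining \(|\xi|\)- and \(|\eta|\)-radial terms cancel in the effective mixed block thanks to \(\tau\)-conservation built into the bilinear integration.

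The final step is linear algebra: with \(B\simeq\lambda^{-1}\) (in the sense of eigenvalues of order \(\lambda^{-1}\)), \(\|C\|\simeq\lambda^{-1/2}\) and \(D\equiv0\), the Schur complement equals
\[
D_{\mathrm{eff}}\;=\;D-C^{\!\top}B^{-1}C\;\simeq\;-\,\lambda^{-1/2}\cdot\lambda\cdot\lambda^{-1/2}\;\simeq\;1,
\]
which is the claimed "longitudinal mass"; after the anisotropic rescaling \eqref{eq:anisorenorm} this yields \(\det A_e\asymp1\) uniformly in the wide region, matching the convention announced in the introduction. The main obstacle is not the algebra but the precise identification of the three-dimensional slice of the ambient \(6\times6\) Hessian on which the frame \((\rho_1,\rho_2,\sigma)\) is applied — pinning down which \(\xi\leftrightarrow\eta\) reduction makes the cancellation \(D_{\mathrm{rad}}\equiv0\) exact rather than merely of size \(\lambda^{-1}\). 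Once that reduction is explicit, so that the \(P_{\tau^\perp}\) term is annihilated by \(\sigma\) and the remaining radial contributions drop out, the size estimates for \(B\), \(C\) and the Schur complement computation reduce to standard Euclidean block-matrix bookkeeping in the adapted frame.
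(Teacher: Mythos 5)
Your route coincides with the paper's sketch --- differentiate \eqref{eq:grad-omega} via $\partial_v\hat w=|w|^{-1}P^\perp_w v$, so that the Hessian is assembled from $|\xi|^{-1}P^\perp_\xi$, $|\eta|^{-1}P^\perp_\eta$, $|\tau|^{-1}P^\perp_\tau$, evaluate on the frame \eqref{eq:frame}, and finish with the Schur formula --- but it stops short of proving the two claims that actually carry the lemma. The decisive point, which you yourself flag as ``the main obstacle'', is left open: you never specify which $3\times3$ slice of the full $6\times6$ Hessian the directional derivatives in \eqref{eq:Aeff-def} are taken in, and the assertion that the remaining $|\xi|$- and $|\eta|$-radial terms ``cancel \ldots thanks to $\tau$-conservation built into the bilinear integration'' is not an argument: $\tau$-conservation is a feature of the trilinear integral, not of the pointwise matrix $A_{\mathrm{eff}}(\xi,\eta)$, and it does not yield the identity $\partial^2_{\sigma\sigma}\omega\equiv0$. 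What $P^\perp_\tau\sigma=0$ gives is only that the $-|\tau|$ piece has vanishing radial second differential; the contributions $|\xi|^{-1}|P^\perp_\xi\hat\tau|^2+|\eta|^{-1}|P^\perp_\eta\hat\tau|^2$ are generically of size $\lambda^{-1}$ and do not vanish under any of the natural readings (derivatives in $\xi$ alone, joint derivatives in $(\xi,\eta)$, or derivatives in the difference variable at fixed $\tau$).

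The same unresolved identification undermines the size bookkeeping. If the slice retains the $|\tau|^{-1}P^\perp_\tau$ term (e.g.\ derivatives in $\xi$ alone), then since $P^\perp_\tau\rho_2=\rho_2$ the $\rho_2\rho_2$ entry is $|\xi|^{-1}-|\tau|^{-1}$, so your claim $\rho_2^\top H\omega\,\rho_2\asymp|\xi|^{-1}+|\eta|^{-1}$ silently discards a term that dominates whenever $|\tau|\ll\lambda$, and $B\simeq\lambda^{-1}$ fails. If instead the angular derivatives are taken in the difference variable at fixed $\tau$ (the reading your ``$\tau$-conservation'' remark gestures at), then no $|\tau|^{-1}$ factor survives anywhere, every entry of the slice is $O(\lambda^{-1})$ by homogeneity, so $\|C\|\simeq\lambda^{-1/2}$ is unobtainable and the Schur complement $-C^{\!\top}B^{-1}C$ comes out $O(\lambda^{-1})$, not $\simeq1$. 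Thus under no reading you make available do $B\simeq\lambda^{-1}$, $\|C\|\simeq\lambda^{-1/2}$ and $D_{\mathrm{rad}}\equiv0$ hold simultaneously from your computation; the missing ingredient is precisely the explicit definition of the mixed slice (and the accompanying normalization \eqref{eq:anisorenorm}) on which these three statements are to be checked. To be fair, the paper's own ``idea of the proof'' is equally silent at this exact spot --- it, too, only notes that the radial second differential of $|\tau|$ vanishes --- but your proposal does not close that gap: it reproduces the sketch and defers the lemma's actual content.
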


\begin{proof}[Idea of the proof]
The formulas for second derivatives follow from~\eqref{eq:grad-omega} and the standard identity
$\partial_v(\widehat{w})=|w|^{-1}P^\perp_w v$, where $P^\perp_w:=I-\widehat{w}\otimes\widehat{w}$.
Since $\rho_j\perp\widehat{\tau}$, all terms of the type $|w|^{-1}(P^\perp_w\rho_j,\rho_k)$ are at the
level $\lambda^{-1}$, which gives $B\simeq\lambda^{-1}$. The mixed blocks $C$ involve one angular and
one longitudinal direction, giving an extra decay $\lambda^{-1/2}$. For the purely longitudinal
component we note that along the radial variable the second differential of $|\tau|$ vanishes,
so $D_{\mathrm{rad}}\equiv0$. Finally, by the Schur formula
$D_{\mathrm{eff}}=-(C^{\!\top}B^{-1}C)$, and since $B^{-1}\simeq \lambda$ and $\|C\|\simeq\lambda^{-1/2}$,
we obtain $D_{\mathrm{eff}}\simeq 1$.
\end{proof}

\begin{lemma}[Nondegeneracy of $A_{\mathrm{eff}}$]\label{lem:detA}
Under the conditions~\eqref{eq:wide-zone} the following estimates hold:
\begin{equation}\label{eq:detAeff}
  \det A_{\mathrm{eff}}(\xi,\eta)\ \gtrsim\ \lambda^{-2},
  \qquad
  \det A_e(\xi,\eta)\ \gtrsim\ 1 .
\end{equation}
\end{lemma}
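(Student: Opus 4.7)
The plan is to deduce Lemma~\ref{lem:detA} directly from the block data supplied by Lemma~\ref{lem:block} using the Schur complement determinant formula, and then to transfer the resulting bound to the renormalized matrix $A_e$ by multiplicativity of the determinant under $S_\lambda$.

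First, since $B\in\mathbb{R}^{2\times 2}$ is invertible in the wide region \eqref{eq:wide-zone} (its two eigenvalues are of order $\lambda^{-1}$ by Lemma~\ref{lem:block}), I would apply the classical identity
\[
  \det A_{\mathrm{eff}}\;=\;\det(B)\cdot\bigl(D-C^{\!\top}B^{-1}C\bigr)\;=\;\det(B)\cdot D_{\mathrm{eff}}.
\]
Reading off the sizes from Lemma~\ref{lem:block}, one has $\det B\simeq\lambda^{-2}$ and $D_{\mathrm{eff}}\simeq 1$, which gives $\det A_{\mathrm{eff}}\simeq\lambda^{-2}$ and in particular the first inequality in \eqref{eq:detAeff}. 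For the second inequality I would simply invoke multiplicativity: with $S_\lambda=\mathrm{diag}(\lambda^{1/2},\lambda^{1/2},1)$ one has $\det A_e=(\det S_\lambda)^2\det A_{\mathrm{eff}}=\lambda^2\det A_{\mathrm{eff}}\gtrsim 1$, exactly matching the purpose of the anisotropic renormalization \eqref{eq:anisorenorm}.

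The main obstacle is really hidden inside the previous lemma: I need genuinely \emph{two-sided} control on $B$, so that $\det B\gtrsim\lambda^{-2}$ and not merely $\lesssim\lambda^{-2}$, together with a matching lower bound on the Schur complement $D_{\mathrm{eff}}$. To secure this I would write out the entries of $B$ explicitly from \eqref{eq:grad-omega} via the identity $\partial_v(\widehat{w})=|w|^{-1}P^\perp_w v$, check that the $2\times 2$ Gram-type form obtained by pairing $(\rho_1,\rho_2)$ against the projections $P^\perp_\xi$, $P^\perp_\eta$ (which account for the $\lambda^{-1}$ prefactor from $1/|\xi|,1/|\eta|$) is uniformly nondegenerate. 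The geometric content is exactly the wide-region hypothesis $\theta=\angle(\xi,\eta)\gtrsim\lambda^{-1/2}$: as long as this holds, $|\tau|$ and $|\xi-\eta|$ remain comparable to $\lambda$ with the correct lower bounds, the frame $\{\rho_1,\rho_2,\widehat\tau\}$ is well-defined, and the two angular projections span a nondegenerate form of size $\lambda^{-1}$. Below this threshold the lower bound degenerates, which is precisely the reason the narrow region is treated separately in \S\ref{sec:narrow} by an energy argument rather than by stationary phase.
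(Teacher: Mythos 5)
Your argument coincides with the paper's own proof: both deduce \eqref{eq:detAeff} from Lemma~\ref{lem:block} via the Schur determinant identity $\det A_{\mathrm{eff}}=\det(B)\,(D-C^{\!\top}B^{-1}C)\simeq\lambda^{-2}\cdot 1$ and then pass to $A_e$ by $\det A_e=\det(S_\lambda)^2\det A_{\mathrm{eff}}\simeq 1$. Your added remark that one needs genuinely two-sided control of $B$ and a lower bound on $D_{\mathrm{eff}}$ (secured by the wide-region angle condition) is exactly the content the paper delegates to Lemma~\ref{lem:block}, so no new route is involved.
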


\begin{proof}
From Lemma~\ref{lem:block} and the Schur formula
$\det A_{\mathrm{eff}}=(\det B)\cdot(D-C^{\!\top}B^{-1}C)$ we get
$\det A_{\mathrm{eff}}\simeq (\lambda^{-2})\cdot 1$, yielding the first estimate~\eqref{eq:detAeff}.
The second follows from the definition of $A_e$ in~\eqref{eq:anisorenorm}:
$\det A_e=\det(S_\lambda)^2\,\det A_{\mathrm{eff}}=\lambda^{2}\cdot \lambda^{-2}\simeq 1$.
\end{proof}

\begin{corollary}[Two angular IBPs]\label{cor:twoIBP}
Let $a_\lambda(\xi,\eta)$ be a smooth amplitude supported in the region~\eqref{eq:wide-zone}, with
\emph{anisotropic} bounds $\bigl|\partial^{\alpha}_{\rho,\rho,\sigma} a_\lambda\bigr|\lesssim \lambda^{|\alpha|/2}$
for $|\alpha|\le3$. Then for the oscillatory integral
\[
  \mathcal{I}(x,t):=\iint e^{i\{x\cdot\tau+t\,\omega(\xi,\eta)\}}\, a_\lambda(\xi,\eta)\,\widehat{u}(\xi)\,\widehat{v}(\eta)\,d\xi d\eta
\]
a double integration by parts in the directions $\rho_1,\rho_2$ yields the suppression
\begin{equation}\label{eq:two-ibp}
  \|\mathcal{I}(\cdot,t)\|_{L^2_x}\ \lesssim\ \lambda^{-1}\,\|P_\lambda u\|_{L^2}\,\|P_\lambda v\|_{L^2},
\end{equation}
uniformly in $t$ and in the choice of cap inside~\eqref{eq:wide-zone}. 
\end{corollary}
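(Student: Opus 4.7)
The approach is to treat $\mathcal{I}$ as a bilinear Fourier integral operator on an anisotropic cap and to exploit the nondegeneracy of the angular Hessian (Lemma~\ref{lem:detA}) through two integrations by parts along the angular directions $\rho_1,\rho_2$, with the final bound uniform in $t$. First I would apply Plancherel in $x$ and change variables to $\tau=\xi+\eta$, $\sigma=(\xi-\eta)/2$, reducing to
\[
\|\mathcal{I}(\cdot,t)\|_{L^2_x}^2\;\lesssim\;\int|J(\tau,t)|^2\,d\tau,\qquad J(\tau,t):=\int e^{it\omega(\xi,\eta)}\,a_\lambda(\xi,\eta)\,\widehat u(\xi)\,\widehat v(\eta)\,d\sigma,
\]
with $(\xi,\eta)=(\tau/2+\sigma,\tau/2-\sigma)$ and the variable $\sigma$ confined by $\operatorname{supp} a_\lambda$ to a box of dimensions $\lambda^{1/2}\times\lambda^{1/2}\times\lambda$ in the adapted frame $(\rho_1,\rho_2,\widehat{\tau})$.

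Second, I would apply the anisotropic renormalization $\tilde\sigma:=S_\lambda^{-1}\sigma$ of~\eqref{eq:anisorenorm}. In the rescaled variable the support becomes of unit size, the symbol bound $|\partial^{\alpha}_{\rho,\rho,\sigma}a_\lambda|\lesssim\lambda^{|\alpha|/2}$ translates into an $S^0$-type estimate with $O(1)$ constants, and by Lemma~\ref{lem:detA} the angular Hessian of the renormalized phase $\widetilde\Phi(\tilde\sigma):=t\,\omega(\tau/2+S_\lambda\tilde\sigma,\tau/2-S_\lambda\tilde\sigma)$ satisfies $\det A_e\gtrsim 1$ uniformly in the cap and in $\tau$. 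Then I would integrate by parts twice, in $\tilde\rho_1$ and $\tilde\rho_2$, using a smooth partition of unity to separate the non-stationary region, where $|\nabla_{\tilde\rho}\widetilde\Phi|$ is large and the standard first-order IBP identity applies, from a neighborhood of the critical set, where a second-order IBP against the nondegenerate Hessian $A_e$ (à la Hörmander's stationary-phase lemma) applies. In either regime the per-IBP gain, translated back to the original coordinates, is $\lambda^{-1/2}$, yielding a pointwise bound $|J(\tau,t)|\lesssim\lambda^{-1}\cdot\mathrm{CS}(\tau)$ with constants independent of $t$, where $\mathrm{CS}(\tau)$ is a Cauchy--Schwarz-type fiber factor.

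Finally, I would substitute this pointwise bound into $\int|J(\tau,t)|^2\,d\tau$, apply Cauchy--Schwarz in the fiber variable $\sigma$, and use Plancherel in $\eta$ to collect $\|\widehat u\|_{L^2}=\|P_\lambda u\|_{L^2}$ and $\|\widehat v\|_{L^2}=\|P_\lambda v\|_{L^2}$ outside the integral; combining these with the $\lambda^{-1}$ from the two IBPs and the support-volume bookkeeping yields~\eqref{eq:two-ibp}.

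\textbf{Main obstacle.} The central difficulty is keeping the two IBPs uniform in $t$. For small $|t|$ the first-order phase gradient of $t\omega$ in the angular directions is negligible and the standard non-stationary IBP is inconclusive; the remedy is to route the gain through the second-order IBP on the nondegenerate angular block $B\simeq\lambda^{-1}$ (Lemma~\ref{lem:block}), which is available uniformly in $t$ because $\det A_e\gtrsim 1$ after anisotropic rescaling. Balancing the first- and second-order IBP regimes, combined with the anisotropic symbol estimate $|\partial^\alpha a_\lambda|\lesssim\lambda^{|\alpha|/2}$, is where the calculation must be carried out carefully so that exactly $\lambda^{-1}$ (without a $t$-dependent factor or a $\log\lambda$ loss) emerges in the final bound.
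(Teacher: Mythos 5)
Your overall scheme (adapted frame, anisotropic rescaling by $S_\lambda$, nondegeneracy of the effective Hessian, two angular IBPs, then Cauchy--Schwarz and Plancherel to collect the $L^2$ norms) is in the same spirit as the paper's argument, which likewise invokes Lemma~\ref{lem:detA} and applies the phase operator $\mathcal{L}_\rho$ twice with anisotropic bookkeeping. However, there is a genuine gap at precisely the point you single out as the main obstacle, and your proposed remedy does not close it. After your Plancherel reduction the spatial phase $x\cdot\tau$ disappears from the fiber integral, so the only oscillation left in the $\sigma$-variable is $t\,\omega$. Its angular Hessian is $t\,A_{\mathrm{eff}}$, not $A_{\mathrm{eff}}$: the nondegeneracy $\det A_e\gtrsim 1$ of Lemma~\ref{lem:detA} is a statement about $\omega$ alone, and $\det(tA_e)=t^3\det A_e$ degenerates as $t\to0$. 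A second-order (stationary-phase) IBP against this Hessian therefore yields a gain that carries inverse powers of $|t|$ and vanishes entirely at $t=0$, where the fiber integral has no oscillation at all and only the trivial Cauchy--Schwarz/Bernstein bound (of order $\lambda^{3/2}\|P_\lambda u\|_{L^2}\|P_\lambda v\|_{L^2}$) is available. So the claim that the bound \eqref{eq:two-ibp} comes out ``uniformly in $t$'' cannot be reached along your route; note that the paper's sketch keeps the full phase $\phi=x\cdot\tau+t\omega$ inside $\mathcal{L}_\rho$, so the spatial term participates in its IBP, whereas in your reduction it has been integrated out before the IBPs are performed.

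A second, related weakness is the exponent bookkeeping in your final step. Even granting a total gain of $\lambda^{-1}$ from the two IBPs, this gain is measured relative to the fiber Cauchy--Schwarz baseline, which after the $\tau$-integration over the output region $|\tau|\lesssim\lambda$ gives $\lambda^{3/2}\|P_\lambda u\|_{L^2}\|P_\lambda v\|_{L^2}$; multiplying by $\lambda^{-1}$ lands at $\lambda^{1/2}$, not at the claimed $\lambda^{-1}$. The missing factor $\lambda^{-3/2}$ is exactly what you defer to ``support-volume bookkeeping,'' i.e.\ the nontrivial part of the estimate is left uncarried. You should either exhibit explicitly how the anisotropic support of $a_\lambda$ and the amplitude bounds produce this extra decay, or restrict the claim to a range of $t$ (e.g.\ $|t|\gtrsim\lambda^{-1/2}$, matching the time windows used elsewhere in the paper) on which the oscillatory gains you invoke are actually available.
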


\begin{proof}[Idea of the proof]
The nondegeneracy of $A_e$ from~\eqref{eq:detAeff} ensures that along at least one of the angular
directions the modulus of the phase derivative is not small; the standard phase operator
$\mathcal{L}_\rho:=(i(\rho\cdot\nabla)\phi)^{-1}(\rho\cdot\nabla)$ (where $\phi=x\cdot\tau+t\omega$)
reduces the oscillation. Two successive IBPs in $\rho_1,\rho_2$ give a factor $\lambda^{-1}$ under
anisotropic bookkeeping of derivatives of the amplitude. Details follow the classical stationary
phase scheme with block reduction and are omitted.
\end{proof}

\begin{remark}[Role of time]\label{rem:time-role}
The factor $\lambda^{-1/2}$ is \emph{not} extracted by integration in time inside the frequency
integral. It will instead be obtained independently through the local anisotropic Strichartz
estimate on windows of length $\lambda^{-1/2}$ (Section~\ref{sec:wp-anis}). Together with~\eqref{eq:two-ibp}
this yields a phase–time gain of order $\lambda^{-3/2}$ in the wide region.
\end{remark}

\begin{remark}[Reduction to the level set]\label{rem:coarea}
Upon reduction to the level set $\{\omega=\mathrm{const}\}$ by the coarea formula, a weight
$|\nabla\omega|^{-1}$ appears; in the region~\eqref{eq:wide-zone} it is uniformly controlled and
absorbed into the amplitude without changing the frequency exponent. This step will be used in the
bilinear decoupling in Section~\ref{sec:decoupling}.
\end{remark}

\newpage

\section{Wave packets and the anisotropic Strichartz lemma}\label{sec:wp-anis}

\subsection{Construction of \texorpdfstring{$\theta$}{theta}-packets}\label{subsec:wp-construction}

Fix a dyadic frequency $\lambda\gg1$ and a direction $\theta\in\mathbb{S}^2$. The angular localization is given by the cap
\begin{equation}\label{eq:theta-cap}
  \Theta_{\lambda,\theta}
  :=\bigl\{\xi\in\mathbb{R}^3:\ |\xi|\sim\lambda,\ \angle(\xi,\theta)\le \lambda^{-1/2}\bigr\},
\end{equation}
and the corresponding projector $P_{\lambda,\theta}$ from~\eqref{eq:ang-proj}. For $\xi\in\Theta_{\lambda,\theta}$ decompose
\[
  \xi=(\xi\cdot\theta)\,\theta+\Pi^\perp_\theta\xi, 
  \qquad |\xi\cdot\theta|\sim\lambda,\ \ |\Pi^\perp_\theta\xi|\lesssim \lambda^{1/2},
\]
where $\Pi^\perp_\theta:=I-\theta\otimes\theta$.

\paragraph{Lattice of centers.}
Define the lattice of centers in physical space by
\begin{equation}\label{eq:wp-grid}
  \mathcal{L}_{\lambda,\theta}
  :=\bigl\{a=a_\parallel\theta+a_\perp:\ a_\parallel\in\lambda^{-1}\mathbb{Z},\ a_\perp\in \lambda^{-1/2}\mathbb{Z}^2\bigr\}.
\end{equation}
This step size is consistent with the longitudinal–transverse scales $\lambda^{-1}$ and $\lambda^{-1/2}$ arising from the inverse Fourier transform.

\paragraph{Frequency mask.}
Let $\chi\in\mathcal{S}(\mathbb{R}^3)$ be a fixed smooth function, with $\widehat{\chi}$ supported in the unit ball and $\int\chi=1$. Set
\begin{equation}\label{eq:packet-mask}
  \sigma_{\lambda,\theta}(\xi)
  :=\chi\!\bigl(\lambda^{-1/2}\,\Pi^\perp_\theta\xi\bigr)\,
    \chi\!\bigl(\lambda^{-1}\bigl((\theta\cdot\xi)-\lambda\bigr)\bigr),
  \qquad \operatorname{supp}\sigma_{\lambda,\theta}\subset \Theta_{\lambda,\theta}.
\end{equation}

\paragraph{Definition of packets.}
For $a\in \mathcal{L}_{\lambda,\theta}$ define the packet in frequency and physical space by
\begin{equation}\label{eq:wp-hat}
  \widehat{\varphi}_{\lambda,\theta,a}(\xi):=e^{-i a\cdot\xi}\,\sigma_{\lambda,\theta}(\xi),\qquad
  \varphi_{\lambda,\theta,a}:=\mathcal{F}^{-1}\widehat{\varphi}_{\lambda,\theta,a}.
\end{equation}
A standard scaling computation yields (for some $\Psi\in\mathcal{S}(\mathbb{R}^3)$ independent of $\lambda,\theta,a$)
\begin{equation}\label{eq:wp-phys}
  \varphi_{\lambda,\theta,a}(x)
  = \lambda^{3/2}\,e^{i\lambda\,\theta\cdot x}\,
    \Psi\!\Bigl(\,\lambda^{1/2}(x_\perp-a_\perp),\,\lambda(x_\parallel-a_\parallel)\Bigr),
\end{equation}
where $x_\parallel:=\theta\cdot x$, $x_\perp:=x-x_\parallel\theta$. In particular,
\begin{equation}\label{eq:wp-tube}
  \varphi_{\lambda,\theta,a}\ \text{is localized in the tube}\ 
  \mathcal{T}_{\lambda,\theta,a}
  :=\Bigl\{x:\ |x_\perp-a_\perp|\lesssim \lambda^{-1/2},\ |x_\parallel-a_\parallel|\lesssim \lambda^{-1}\Bigr\}.
\end{equation}
We fix the normalization in~\eqref{eq:packet-mask} so that
\begin{equation}\label{eq:wp-norm}
  \|\varphi_{\lambda,\theta,a}\|_{L^2_x}\simeq 1\quad\text{uniformly in }\lambda,\theta,a.
\end{equation}

\paragraph{Almost-orthogonality and expansion.}
The family $\{\varphi_{\lambda,\theta,a}\}_{a\in\mathcal{L}_{\lambda,\theta}}$ forms an almost-orthonormal layer in $L^2$ at fixed $(\lambda,\theta)$: for all $f\in L^2(\mathbb{R}^3)$
\begin{equation}\label{eq:wp-parseval}
  \sum_{a\in\mathcal{L}_{\lambda,\theta}}\bigl|\langle f,\varphi_{\lambda,\theta,a}\rangle\bigr|^2
  \ \simeq\ \|P_{\lambda,\theta}f\|_{L^2}^2,
\end{equation}
and the projector itself expands as
\begin{equation}\label{eq:wp-expansion}
  P_{\lambda,\theta}f
  =\sum_{a\in\mathcal{L}_{\lambda,\theta}}\langle f,\varphi_{\lambda,\theta,a}\rangle\,\varphi_{\lambda,\theta,a}
  \quad\text{(convergence in $L^2$).}
\end{equation}
Summation over $\theta\in\Lambda_\lambda$ and over dyadic $\lambda$ recovers the standard Littlewood–Paley structure (\S\ref{subsec:spaces-filters}).

\paragraph{Packet evolution.}
Let $S(t)f:=\mathcal{F}^{-1}\!\bigl(e^{it|\xi|}\widehat{f}\bigr)$. Then for each $a\in\mathcal{L}_{\lambda,\theta}$ and interval $I$ of length $|I|=\lambda^{-1/2}$ the trajectory $S(t)\varphi_{\lambda,\theta,a}$ remains concentrated in the cylinder
\begin{equation}\label{eq:wp-cylinder}
  Q_{\lambda,\theta,a}(I)
  :=\bigl\{(t,x):\ t\in I,\ |x_\perp-a_\perp|\lesssim\lambda^{-1/2},\ |x_\parallel-a_\parallel-t|\lesssim \lambda^{-1}\bigr\},
\end{equation}
which is consistent with the scales in \eqref{eq:wp-tube}. It is precisely on such windows that the local anisotropic Strichartz estimate is applied in \S\ref{subsec:wp-stri} and App.~\ref{app:astrichartz}.

\begin{remark}\label{rem:wp-robust}
The specific choice of $\chi$ in~\eqref{eq:packet-mask} and the form of the lattice~\eqref{eq:wp-grid} are immaterial: statements \eqref{eq:wp-parseval}–\eqref{eq:wp-cylinder} remain valid under replacement by equivalent smooth cutoffs and quasi-lattices with the same scales (see, e.g., \cite[Ch.~2]{Grafakos2009}, \cite[Ch.~2]{bahouri2011fourier}).
\end{remark}

\subsection{\texorpdfstring{$TT^\ast$}{TT*} argument and the (6,2) estimate}\label{subsec:wp-stri}

In this subsection we establish on a window $|I|=\lambda^{-1/2}$ an anisotropic Strichartz estimate for the flow $S(t)$ localized to the cap $\Theta_{\lambda,\theta}$ (see \S\ref{subsec:wp-construction} and the definition of $S(t)$ in~\eqref{eq:Y-local}).

\begin{lemma}[Local anisotropic Strichartz estimate]\label{lem:anisotropic-strichartz}
Let $\lambda\gg1$, $\theta\in\mathbb{S}^2$, $I=[t_0,t_0+\lambda^{-1/2}]$, and let $P_{\lambda,\theta}$ be the angular projector from~\eqref{eq:ang-proj}. Then for any $f\in L^2(\mathbb{R}^3)$ and any $\varepsilon>0$ one has
\begin{equation}\label{eq:local-L6}
  \big\| S(\cdot)\,P_{\lambda,\theta} f \big\|_{L^6_{t,x}(I\times\mathbb{R}^3)}
  \;\lesssim\; \lambda^{-1/2+\varepsilon}\,\|P_{\lambda,\theta} f\|_{L^2_x},
\end{equation}
where the constant is independent of $\lambda$, $\theta$, $t_0$, and $f$.
\end{lemma}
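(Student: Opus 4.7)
I would reduce the estimate to a $TT^*$ argument. Setting $Tf := S(\cdot)\,P_{\lambda,\theta} f$ as a map $L^2(\mathbb{R}^3)\to L^6_{t,x}(I\times\mathbb{R}^3)$, one has $\|T\|^2=\|TT^*\|_{L^{6/5}\to L^6}$, and the composition is a spacetime convolution with kernel
\begin{equation*}
  K(t-s,\,x-y)\;=\;\int_{\mathbb{R}^3} e^{\,i(t-s)|\xi|+i(x-y)\cdot\xi}\bigl|\sigma_{\lambda,\theta}(\xi)\bigr|^{2}\,d\xi\;=:\;K_{t-s}(x-y),
\end{equation*}
so the target bound is equivalent to $\|TT^*\|_{L^{6/5}\to L^6}\lesssim\lambda^{-1+2\varepsilon}$ on $I\times\mathbb{R}^3$ with $|I|=\lambda^{-1/2}$.

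The core computation is a pointwise analysis of $K_\sigma$ in paraxial coordinates $\xi=\lambda\theta+\lambda^{1/2}\eta_\perp+\lambda\eta_\parallel\theta$ (Jacobian $\lambda^{2}$). Using the Taylor expansion
\begin{equation*}
  |\xi|\;=\;\lambda(1+\eta_\parallel)\,+\,\frac{|\eta_\perp|^{2}}{2(1+\eta_\parallel)}\,+\,O(\lambda^{-1}),
\end{equation*}
and observing that on $|\sigma|\le\lambda^{-1/2}$ the quadratic phase $\sigma|\eta_\perp|^{2}$ contributes only $O(\lambda^{-1/2})$ and can be absorbed into the symbol, non-stationary phase in the three linear directions yields
\begin{equation*}
  |K_\sigma(z)|\;\lesssim\;\lambda^{2}\,\bigl\langle\lambda(\sigma+\theta\cdot z)\bigr\rangle^{-N}\bigl\langle\lambda^{1/2}z_\perp\bigr\rangle^{-N}\qquad(\forall N\in\mathbb{N}),
\end{equation*}
so that $\|K_\sigma\|_{L^{1}_x}\lesssim 1$, $\|K_\sigma\|_{L^\infty_x}\lesssim\lambda^{2}$, and $\|K_\sigma\|_{L^{3/2}_x}\lesssim\lambda^{2/3}$ uniformly in $\sigma$.

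To turn these kernel bounds into the operator norm, I would combine the wave-packet decomposition $P_{\lambda,\theta}f=\sum_{a\in\mathcal{L}_{\lambda,\theta}}c_a\varphi_{\lambda,\theta,a}$ of~\eqref{eq:wp-expansion} with the bounded-overlap property of the spacetime cylinders $Q_{\lambda,\theta,a}(I)$ from~\eqref{eq:wp-cylinder}: on the window $I$ the flow $S(t)$ translates each tube by a distance $\lesssim\lambda^{-1/2}$, so the $S(t)\varphi_{\lambda,\theta,a}$ are essentially supported in pairwise disjoint thickened tubes, and an almost-orthogonal $\ell^{2}$-vs-$L^{6}$ estimate via~\eqref{eq:wp-parseval} organizes the sum. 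The residual Young-type step — Young in $x$ from $L^{6/5}$ to $L^6$ via $\|K_\sigma\|_{L^{3/2}_x}$, then Young in $t$ on $I$ using $\|\mathbf{1}_I\|_{L^3_t}=|I|^{1/3}=\lambda^{-1/6}$ (with exponent triple $1/6+1=1/3+5/6$) — supplies the temporal gain; the $\lambda^{\varepsilon}$ loss absorbs the endpoint complex-interpolation step and a Bourgain-type dyadic summation over sub-windows of $I$.

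\textbf{Main obstacle.} The delicate point is that on the short window $|I|=\lambda^{-1/2}$ the paraxial Schrödinger phase is not yet active and no genuine dispersive decay of $\|K_\sigma\|_{L^\infty_x}$ in $\sigma$ is available: the temporal half-power must come entirely from the volume factor $|I|^{1/3}$ combined with the cap geometry and the sharpness of the tube almost-orthogonality. Making the wave-packet bookkeeping quantitatively tight enough to deliver exactly $\lambda^{-1+2\varepsilon}$ in the $TT^*$ norm — rather than the weaker constant produced by the naïve Young chain without the tube-disjointness input — is where the bulk of the technical work will lie, and is the step most sensitive to the precise choice of the $\lambda^{-1/2}$ window length.
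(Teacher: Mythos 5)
Your route is the same as the paper's (a $TT^\ast$ reduction with an anisotropic kernel bound on the window $I$), and your kernel analysis is in fact more careful than the paper's: with the paraxial parametrization of the full cap the Jacobian is $\lambda^{2}$, the quadratic phase $\sigma|\eta_\perp|^2$ is indeed $O(\lambda^{-1/2})$ on $|\sigma|\le\lambda^{-1/2}$, and your pointwise bound on $K_\sigma$ and the norms $\|K_\sigma\|_{L^1_x}\lesssim1$, $\|K_\sigma\|_{L^\infty_x}\lesssim\lambda^2$ are correct. The gap is exactly where you flag it. First, a concrete bookkeeping error: Young's inequality for $L^{6/5}_x\to L^6_x$ requires the kernel in $L^3_x$, not $L^{3/2}_x$ (the exponent identity is $1+\tfrac16=\tfrac13+\tfrac56$ in the space variable as well), and $\|K_\sigma\|_{L^3_x}\lesssim\|K_\sigma\|_{L^\infty}^{2/3}\|K_\sigma\|_{L^1}^{1/3}\lesssim\lambda^{4/3}$. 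The correct chain then gives $\|TT^\ast\|_{L^{6/5}\to L^6}\lesssim\lambda^{4/3}\,|I|^{1/3}=\lambda^{7/6}$, i.e. $\|T\|_{L^2\to L^6(I)}\lesssim\lambda^{7/12}$ — precisely the trivial Bernstein--H\"older bound, with no gain.

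Second, and decisively, the "wave-packet bookkeeping" you hope will upgrade this to $\lambda^{-1+2\varepsilon}$ cannot exist, because the inequality \eqref{eq:local-L6} is already violated by a single cap-adapted function: take $\widehat f$ an $L^2$-normalized bump on the whole cap $\Theta_{\lambda,\theta}$ (dimensions $\lambda^{1/2}\times\lambda^{1/2}\times\lambda$, volume $\sim\lambda^2$). Then $|S(t)f|\gtrsim\lambda$ on a dual box of measure $\sim\lambda^{-2}$ for every $t\in I$ (on $I$ the curvature phase contributes only $O(\lambda^{-1/2})$, as you yourself note), so $\|S(\cdot)f\|_{L^6_{t,x}(I\times\mathbb{R}^3)}\gtrsim\lambda^{2/3}|I|^{1/6}=\lambda^{7/12}\gg\lambda^{-1/2+\varepsilon}$. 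Since the extremizer is essentially one wave packet, tube almost-disjointness and $\ell^2$ orthogonality over centers cannot help; $\lambda^{7/12}$ is the sharp constant, off the claimed one by $\lambda^{13/12}$. You should also not expect to find the missing ingredient in the paper: its proof (and App.~A) derives the same kernel decay and then asserts that Schur-type row/column $L^1$ bounds of size $\lambda^{-1+\varepsilon}$ give $\|TT^\ast\|_{L^{6/5}\to L^6}\lesssim\lambda^{-1+\varepsilon}$, but Schur's test only yields $L^p\to L^p$ bounds, not the smoothing bound needed here, so the paper skips over exactly the step you could not supply.
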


\begin{proof}[Sketch via $TT^\ast$]
Let $\sigma_{\lambda,\theta}\in C_c^\infty(\mathbb{R}^3)$ be a smooth cutoff equal to $1$ on $\Theta_{\lambda,\theta}$ and supported in its twofold enlargement. Consider the operator
\begin{equation}\label{eq:TT-operator}
  (T_{\lambda,\theta}g)(t,x):=\int_{\mathbb{R}^3} e^{i(x\cdot\xi+t|\xi|)}\,\sigma_{\lambda,\theta}(\xi)\,g(\xi)\,d\xi,
\end{equation}
so that $S(t)P_{\lambda,\theta}f=T_{\lambda,\theta}\widehat{f}$. It suffices to show $\|T_{\lambda,\theta}\|_{L^2_\xi\to L^6_{t,x}(I)}\lesssim \lambda^{-1/2+\varepsilon}$.

The adjoint operator has the form
\[
  (T_{\lambda,\theta}^\ast F)(\xi)=\int_{I}\!\!\int_{\mathbb{R}^3} e^{-i(x\cdot\xi+t|\xi|)}\,\sigma_{\lambda,\theta}(\xi)\,F(t,x)\,dx\,dt,
\]
and therefore
\begin{equation}\label{eq:TT-kernel}
  (T_{\lambda,\theta}T_{\lambda,\theta}^\ast F)(t,x)=\int_{I}\!\!\int_{\mathbb{R}^3} K_\lambda(t-\tau,x-y)\,F(\tau,y)\,dy\,d\tau,
\end{equation}
where the kernel is
\begin{equation}\label{eq:TT-kernel-def}
  K_\lambda(\tau,z):=\int_{\mathbb{R}^3} e^{i(z\cdot\xi+\tau|\xi|)}\,\sigma_{\lambda,\theta}(\xi)^2\,d\xi.
\end{equation}
Passing to coordinates $\xi=\lambda\theta+\xi_\perp+\xi_\parallel\theta$ with $|\xi_\perp|\lesssim \lambda^{1/2}$ and $|\xi_\parallel|\lesssim1$, a standard stationary phase/integration-by-parts argument yields, for any $M\ge10$,
\begin{equation}\label{eq:kernel-decay}
  |K_\lambda(\tau,z)|\;\lesssim\;
  \lambda^{3/2}\,\bigl(1+\lambda\,|\theta\cdot z-\tau|\bigr)^{-M}\,
  \bigl(1+\lambda^{1/2}|z_\perp|\bigr)^{-M},
\end{equation}
where $z_\perp:=z-(\theta\cdot z)\,\theta$. Schur's test together with $|I|=\lambda^{-1/2}$ gives
\[
  \|T_{\lambda,\theta}T_{\lambda,\theta}^\ast\|_{L^{6/5}_{t,x}(I)\to L^6_{t,x}(I)}\ \lesssim\ \lambda^{-1+\varepsilon},
\]
whence $\|T_{\lambda,\theta}\|_{L^2\to L^6_{t,x}(I)}\lesssim \lambda^{-1/2+\varepsilon}$ and thus \eqref{eq:local-L6}. See also App.~\ref{app:astrichartz} for a detailed discussion.
\end{proof}

\paragraph{Consequences.}
(1) By the almost-orthogonality of wave packets from \S\ref{subsec:wp-construction}, inequality \eqref{eq:local-L6} passes from a single packet to the sum over centers $a$ at fixed $(\lambda,\theta)$ without worsening the frequency exponent:
\begin{equation}\label{eq:local-L6-packet-sum}
  \big\|S(\cdot)\,P_{\lambda,\theta} f\big\|_{L^6_{t,x}(I)}\ \lesssim\ \lambda^{-1/2+\varepsilon}\,\|P_{\lambda,\theta}f\|_{L^2_x}.
\end{equation}
(2) Patching over windows $I_{\lambda,j}$ of length $\lambda^{-1/2}$ (see \eqref{eq:I-lambda}) with overlap $O(1)$ and then summing over $\theta$ (the partition of unity on caps \eqref{eq:ang-proj}) yields a global form without loss in $\lambda$:
\begin{equation}\label{eq:global-L6}
  \big\|S(\cdot)\,P_{\lambda} f\big\|_{L^6_{t,x}([0,T]\times\mathbb{R}^3)}
  \ \lesssim\ \lambda^{-1/2+\varepsilon}\,\|P_{\lambda}f\|_{L^2_x},
\end{equation}
see also \S\ref{sec:glue} regarding temporal patching.

\subsection{Anisotropy gain}\label{subsec:wp-gain}

The local anisotropic Strichartz estimate \eqref{eq:local-L6} on the window $I=[t_0,t_0+\lambda^{-1/2}]$ immediately gives a bilinear gain for the product of two flows localized in (possibly different) caps.

\begin{lemma}[Local bilinear gain]\label{lem:bilinear-L3}
Let $\lambda\gg1$, $\theta_1,\theta_2\in\mathbb{S}^2$, $I=[t_0,t_0+\lambda^{-1/2}]$. Then for any $f,g\in L^2(\mathbb{R}^3)$ and any $\varepsilon>0$ one has
\begin{equation}\label{eq:bilinear-L3}
  \big\| \big(S(\cdot)P_{\lambda,\theta_1}f\big)\,\big(S(\cdot)P_{\lambda,\theta_2}g\big)\big\|_{L^3_{t,x}(I\times\mathbb{R}^3)}
  \ \lesssim\ \lambda^{-1+\varepsilon}\,\|P_{\lambda,\theta_1}f\|_{L^2_x}\,\|P_{\lambda,\theta_2}g\|_{L^2_x}.
\end{equation}
\end{lemma}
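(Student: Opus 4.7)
The plan is straightforward: the stated inequality follows at once from Hölder's inequality combined with the linear local Strichartz estimate of Lemma~\ref{lem:anisotropic-strichartz}. The exponent $\lambda^{-1+\varepsilon}$ is precisely twice the linear gain $\lambda^{-1/2+\varepsilon'}$, so no genuinely bilinear geometric input (transversality between $\theta_1$ and $\theta_2$, cone-type bilinear estimates, etc.) is needed; the lemma is a clean tensorization of the linear result on the same time window.

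First, I apply Hölder on the cylinder $I\times\mathbb{R}^3$ in the symmetric form $\tfrac13=\tfrac16+\tfrac16$, pointwise in both time and space:
\[
\bigl\|(S(\cdot)P_{\lambda,\theta_1}f)\,(S(\cdot)P_{\lambda,\theta_2}g)\bigr\|_{L^3_{t,x}(I\times\mathbb{R}^3)}
\le \bigl\|S(\cdot)P_{\lambda,\theta_1}f\bigr\|_{L^6_{t,x}(I\times\mathbb{R}^3)}\,\bigl\|S(\cdot)P_{\lambda,\theta_2}g\bigr\|_{L^6_{t,x}(I\times\mathbb{R}^3)}.
\]
Next, I apply Lemma~\ref{lem:anisotropic-strichartz} to each factor separately. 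The window length $|I|=\lambda^{-1/2}$ and the dyadic parameter $\lambda$ are the same for both factors, and the constant in~\eqref{eq:local-L6} is uniform in the direction $\theta$; hence each factor is controlled by $\lambda^{-1/2+\varepsilon'}\,\|P_{\lambda,\theta_j}(\cdot)\|_{L^2_x}$. Multiplying the two bounds and renaming $\varepsilon:=2\varepsilon'$ (allowed since $\varepsilon'>0$ is arbitrary) yields~\eqref{eq:bilinear-L3}.

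There is no substantive technical obstacle: the key observation is that the stated estimate makes \emph{no} use of the relative position of the caps $\Theta_{\lambda,\theta_1}$ and $\Theta_{\lambda,\theta_2}$, and in particular remains valid for $\theta_1=\theta_2$. A sharper bilinear estimate exploiting transversality $|\theta_1-\theta_2|\gtrsim\lambda^{-1/2}$ would supply an additional geometric factor, but it is not needed at this stage: the role of Lemma~\ref{lem:bilinear-L3} is to provide the base ``product of two local $L^6$'' input for the bilinear decoupling step of~\S\ref{sec:decoupling}, where the curvature-based gain on the level surface $\{\omega=\mathrm{const}\}$ is collected separately. One should also verify that the constant remains uniform in the window endpoint $t_0$; this is immediate since both the kernel bound~\eqref{eq:kernel-decay} and the subsequent Schur test in the proof of Lemma~\ref{lem:anisotropic-strichartz} are translation-invariant in $t$.
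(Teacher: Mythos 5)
Your proof is correct and is essentially identical to the paper's: Hölder in the form $L^6\times L^6\to L^3$ on $I\times\mathbb{R}^3$ followed by Lemma~\ref{lem:anisotropic-strichartz} applied to each factor, with the harmless relabelling of $\varepsilon$. Your added remarks (no transversality needed, uniformity in $t_0$ and $\theta$) are consistent with how the lemma is used later and do not change the argument.
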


\begin{proof}
Apply Hölder’s inequality $L^6\times L^6\to L^3$ in $(t,x)$ and Lemma~\ref{lem:anisotropic-strichartz} to each factor:
\[
  \|S(\cdot)P_{\lambda,\theta_j}h\|_{L^6_{t,x}(I)}\ \lesssim\ \lambda^{-1/2+\varepsilon}\,\|P_{\lambda,\theta_j}h\|_{L^2_x},\qquad j\in\{1,2\}.
\]
Multiplying the two estimates gives \eqref{eq:bilinear-L3}.
\end{proof}

\begin{corollary}[Temporal globalization]\label{cor:bilinear-global}
Let $[0,T]$ be covered by windows $I_{\lambda,j}$ of length $\lambda^{-1/2}$ with overlap $O(1)$ as in \eqref{eq:I-lambda}. Then
\begin{equation}\label{eq:bilinear-global-L3}
  \big\| \big(S(\cdot)P_{\lambda,\theta_1}f\big)\,\big(S(\cdot)P_{\lambda,\theta_2}g\big)\big\|_{L^3_{t,x}([0,T]\times\mathbb{R}^3)}
  \ \lesssim\ \lambda^{-1+\varepsilon}\,\|P_{\lambda,\theta_1}f\|_{L^\infty_tL^2_x}\,\|P_{\lambda,\theta_2}g\|_{L^\infty_tL^2_x}.
\end{equation}
\end{corollary}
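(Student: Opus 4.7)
The plan is to derive the global $L^3_{t,x}$ bilinear bound from the local version (Lemma~\ref{lem:bilinear-L3}) by partitioning the time axis along the cover~\eqref{eq:I-lambda} and reassembling with the finite overlap constant. The three inputs needed are: the $O(1)$ overlap multiplicity of $\{I_{\lambda,j}\}_j$; the uniformity in $j$ of the local bilinear constant; and the fact that the per-window sup of $\|P_{\lambda,\theta_i}(\cdot)\|_{L^2_x}$ is controlled by the $L^\infty_t L^2_x$ norm on the right-hand side.

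Abbreviating $F:=(S(\cdot)P_{\lambda,\theta_1}f)(S(\cdot)P_{\lambda,\theta_2}g)$, I would first use the bounded overlap of the cover to write
\[
  \|F\|_{L^3_{t,x}([0,T]\times\mathbb{R}^3)}^3 \;\le\; C\,\sum_j \|F\|_{L^3_{t,x}(I_{\lambda,j}\times\mathbb{R}^3)}^3,
\]
and then apply Lemma~\ref{lem:bilinear-L3} on each $I_{\lambda,j}$, using the slicewise convention~\eqref{eq:Y-local}, to get
\[
  \|F\|_{L^3_{t,x}(I_{\lambda,j})} \;\lesssim\; \lambda^{-1+\varepsilon}\,\sup_{t\in I_{\lambda,j}}\|P_{\lambda,\theta_1}f(t)\|_{L^2_x}\,\sup_{t\in I_{\lambda,j}}\|P_{\lambda,\theta_2}g(t)\|_{L^2_x}.
\]
Each window-sup is then dominated by the global $\|\cdot\|_{L^\infty_t L^2_x}$, and a cube-sum-cube-root procedure is meant to complete the argument.

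The main obstacle, and the only place where the argument is not routine, is that naive cubing and summation over the $J_\lambda=O(T\lambda^{1/2})$ windows introduces a factor $J_\lambda^{1/3}=O(T^{1/3}\lambda^{1/6})$ that would overwhelm the claimed exponent $\lambda^{-1+\varepsilon}$. To avoid this I would exploit the temporal orthogonality of the wave-packet decomposition from \S\ref{subsec:wp-construction}: the space-time cylinders $Q_{\lambda,\theta,a}(I_{\lambda,j})$ in~\eqref{eq:wp-cylinder} attached to distinct windows are essentially disjoint, so the packet Plancherel identity~\eqref{eq:wp-parseval} together with a decoupling step along the time axis (in the spirit of \S\ref{sec:decoupling}) upgrades the $\ell^3_j$-sum over windows to an effectively $\ell^\infty_j$-sum of the packet-coefficient energies. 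The residual logarithmic loss from the overlap of adjacent windows is then absorbed into the $\lambda^{\varepsilon}$ slack already present in Lemma~\ref{lem:bilinear-L3}, yielding the claimed rate.
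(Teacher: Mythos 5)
Your first half is exactly the paper's (very short) argument: apply Lemma~\ref{lem:bilinear-L3} on each window $I_{\lambda,j}$, dominate the per-window data norms by $\|\cdot\|_{L^\infty_tL^2_x}$, and reassemble using the $O(1)$ overlap; the paper simply asserts that this ``$\ell^1$-patching in time'' does not change the frequency exponent. You are right to be uneasy about that assertion: since the cover has $J_\lambda\sim T\lambda^{1/2}$ windows and the per-window bounds are all comparable, the cube-sum-cube-root step honestly produces a factor $J_\lambda^{1/3}\sim T^{1/3}\lambda^{1/6}$, and indeed the paper itself concedes the analogous softening for the linear $L^6$ patching in \eqref{eq:patch-L6} and \eqref{eq:glue-exponent-fix} (from $\lambda^{-1/2}$ to $\lambda^{-5/12}T^{1/6}$), which at the bilinear $L^3$ level corresponds to $\lambda^{-5/6+\varepsilon}T^{1/3}$ rather than the $T$-independent $\lambda^{-1+\varepsilon}$ of \eqref{eq:bilinear-global-L3}.

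The genuine gap is in your proposed repair. For a function whose time support is split along the essentially disjoint windows $I_{\lambda,j}$, the identity $\|F\|_{L^3([0,T])}^3=\sum_j\|F\|_{L^3(I_{\lambda,j})}^3$ holds up to the overlap constant in \emph{both} directions: disjointness of temporal supports is precisely what forces $\ell^3_j$ additivity, and no packet Plancherel identity \eqref{eq:wp-parseval} or ``decoupling along the time axis'' can convert that sum into an $\ell^\infty_j$ maximum --- decoupling controls a norm of a superposition by an $\ell^2$ aggregate of pieces, which is the wrong direction here, and there is no interference to exploit between disjointly supported time slabs. Moreover each window's contribution can genuinely be of the same size (the flow preserves $\|P_{\lambda,\theta}f(t)\|_{L^2_x}$, so Lemma~\ref{lem:bilinear-L3} is essentially saturated on every window for suitable data), so the $J_\lambda^{1/3}$ factor is not an artifact of crude bookkeeping; it is polynomial in $\lambda$, hence cannot be absorbed into the $\lambda^{\varepsilon}$ slack as you suggest (that slack absorbs at most $\lambda^{\varepsilon}$, not $\lambda^{1/6}$). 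As written, your argument therefore proves only $\lambda^{-1+\varepsilon}(T\lambda^{1/2})^{1/3}$, not \eqref{eq:bilinear-global-L3}; to do better one would need some summability in $j$ of the per-window data norms (e.g. an $\ell^2_jL^2_x$ rather than $L^\infty_tL^2_x$ control), which the stated hypotheses do not provide.
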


\begin{proof}
Summation over the windows $I_{\lambda,j}$ with overlap $O(1)$ (see \S\ref{sec:glue}) does not change the frequency exponent; apply Lemma~\ref{lem:bilinear-L3} on each $I_{\lambda,j}$ and use $\ell^1$-patching in time.
\end{proof}

\paragraph{Interpretation.}
Estimate \eqref{eq:bilinear-L3} captures the \emph{anisotropic} gain $\lambda^{-1}$ from two localized flows (one factor $\lambda^{-1/2}$ per cap), consistent with the scale of windows $|I|=\lambda^{-1/2}$. In what follows it is applied to bilinear contributions after wave packet discretization (\S\ref{subsec:wp-construction}) and used together with phase integration (\S\ref{sec:phase}) and the decoupling step (\S\ref{sec:decoupling}). This step provides precisely the “temporal” part of the mnemonic IBP$\times$3 in the final balance of exponents (see \S\ref{sec:glue}).

\newpage

\section{Bilinear decoupling at \texorpdfstring{$\delta(6)=\tfrac14$}{\(\delta(6)=1/4\)}}\label{sec:decoupling}

\subsection{\texorpdfstring{$\ell^2$}{l2}-tile decomposition}\label{subsec:l2-tiling}
Fix a frequency $\lambda\gg1$ and a cap $\Theta_{\lambda,\theta}$ (see \S\ref{subsec:wp-construction}). Decompose the cap into quasi-tiles at the scale
\begin{equation}\label{eq:tile-def}
  T=T(\xi_0):=\Theta_{\lambda,\theta}\cap(\xi_0+Q_\lambda),\qquad 
  Q_\lambda:=[0,\lambda^{-1/2}]^2\times[0,\lambda^{-1}],
\end{equation}
where the shifts $\xi_0$ range over the lattice $\lambda^{-1/2}\mathbb{Z}^2\times \lambda^{-1}\mathbb{Z}$, consistent with longitudinal–transverse anisotropy. For $f\in L^2(\mathbb{R}^3)$ set
\begin{equation}\label{eq:tile-decomp}
  \widehat{f_T}:=\widehat f\,\mathbf{1}_T,\qquad 
  f=\sum_{T} f_T,\qquad 
  \sum_T \|f_T\|_{L^2}^2\ \simeq\ \|P_{\lambda,\theta}f\|_{L^2}^2.
\end{equation}
Define similarly $g=\sum_{T'} g_{T'}$.

\paragraph{Admissible tile pairs.}
We say that a pair $(T,T')$ is \emph{admissible} if the sum of supports lies in the “output corridor’’ of width $O(\lambda)$:
\begin{equation}\label{eq:pair-admissible}
  \operatorname{supp}\widehat{f_T}+\operatorname{supp}\widehat{g_{T'}}\subset\{\,\zeta:\ |\zeta|\lesssim \lambda\,\}.
\end{equation}
From the angular geometry of the cap it follows that for each tile $T$ the number of tiles $T'$ satisfying \eqref{eq:pair-admissible} is bounded universally (at the level $O(1)$), and tiles from different caps $\theta\neq\theta'$ satisfy the angular separation
\begin{equation}\label{eq:tile-angle}
  \angle(T,T')\ \gtrsim\ \lambda^{-1/2}.
\end{equation}

\paragraph{Bilinear operator on tiles.}
Introduce the bilinear kernel (wave “extension’’ at the dyadic level):
\begin{equation}\label{eq:Elam-def}
  \mathcal{E}_\lambda(f,g)(t,x)
  :=\iint e^{i\{x\cdot(\xi+\eta)+t(|\xi|+|\eta|)\}}\,
          \widehat f(\xi)\,\widehat g(\eta)\,d\xi d\eta,
\end{equation}
and its tiled version $\mathcal{E}_\lambda(f_T,g_{T'})$, where $\widehat f,\widehat g$ are replaced by $\widehat{f_T},\widehat{g_{T'}}$.

\begin{lemma}[$\ell^2$-tile estimate]\label{lem:l2-tiling}
Let $\lambda\gg1$, $\theta\in\mathbb{S}^2$, and let $f,g\in L^2(\mathbb{R}^3)$ be localized in $\Theta_{\lambda,\theta}$. Then for any $\varepsilon>0$,
\begin{equation}\label{eq:l2-tiling-L6}
  \big\|\mathcal{E}_\lambda(f,g)\big\|_{L^6_{t,x}}
  \ \lesssim\ \lambda^{-1/2+\varepsilon}\,\|f\|_{L^2}\,\|g\|_{L^2}.
\end{equation}
\end{lemma}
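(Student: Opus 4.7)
The plan is to expand both $f$ and $g$ in the tile basis \eqref{eq:tile-decomp} and reduce the $L^6$ bound to a per-tile Strichartz estimate combined with an $\ell^2$-orthogonality step via bilinear decoupling. First I would write $f=\sum_T f_T$ and $g=\sum_{T'} g_{T'}$, so that $\mathcal{E}_\lambda(f,g)=\sum_{T,T'}\mathcal{E}_\lambda(f_T,g_{T'})$; the admissibility condition \eqref{eq:pair-admissible} together with the angular separation \eqref{eq:tile-angle} keeps only $O(1)$ tiles $T'$ paired with each $T$, so no combinatorial loss is incurred by restricting the sum to admissible pairs.

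For a single admissible pair $(T,T')$, I would observe that $\mathcal{E}_\lambda(f_T,g_{T'})(t,x)=(S(t)f_T)(x)\,(S(t)g_{T'})(x)$ is the pointwise product of two wave-packet evolutions, each concentrated in an anisotropic tube as in \eqref{eq:wp-tube} and \eqref{eq:wp-cylinder}. Applying Lemma~\ref{lem:anisotropic-strichartz} to each factor on a window $I$ of length $\lambda^{-1/2}$, and then exploiting the tube localization of the product (spatial cross-section $\lambda^{-1/2}\times\lambda^{-1/2}\times\lambda^{-1}$ over duration $\lambda^{-1/2}$) to upgrade the naive Hölder $L^6\times L^6\to L^3$ bound to an $L^6$ bound, I would obtain
\[
  \|\mathcal{E}_\lambda(f_T,g_{T'})\|_{L^6_{t,x}(I)}\ \lesssim\ \lambda^{-1/2+\varepsilon}\,\|f_T\|_{L^2}\,\|g_{T'}\|_{L^2}.
\]

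To pass from per-tile bounds to the full sum, I would apply bilinear $\ell^2$-decoupling at the benchmark exponent $\delta(6)=\tfrac14$ on the rank-$3$ output surface $\Sigma:=\{(\xi+\eta,|\xi|+|\eta|):\xi,\eta\in\Theta_{\lambda,\theta}\}$. By Lemma~\ref{lem:detA} the effective Hessian is nondegenerate in the wide region, so the decoupling setup applies; the coarea reduction of Remark~\ref{rem:coarea} aligns the tile geometry with that of $\Sigma$. The square-summability $\sum_T\|f_T\|_{L^2}^2\simeq\|f\|_{L^2}^2$ from \eqref{eq:tile-decomp}, combined with the $O(1)$-multiplicity of admissible pairs, then yields \eqref{eq:l2-tiling-L6} on each window, and temporal patching over the $O(1)$-overlapping $\lambda^{-1/2}$-windows of \eqref{eq:I-lambda} preserves the frequency exponent.

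\paragraph{Main obstacle.}
The hardest step is the per-tile $L^6$ bound with the sharp $\lambda^{-1/2+\varepsilon}$ factor. Hölder $L^6\times L^6\to L^3$ applied to two single-factor Strichartz estimates produces only an $L^3$ bound at the level of $\lambda^{-1+\varepsilon}$, and upgrading this to $L^6$ with the right effective decay requires the concentration of the product of wave packets on a narrow sub-tube, relying on the phase-geometric input of Section~\ref{sec:phase} and the packet description \eqref{eq:wp-phys}. Aligning the anisotropic tile aspect ratio $\lambda^{-1/2}\times\lambda^{-1/2}\times\lambda^{-1}$ with the curvature directions of $\Sigma$ is equally delicate: the $\delta(6)=\tfrac14$ benchmark is tight, and any misalignment between the tile scales and the level-set geometry of the rank-$3$ surface would reintroduce the logarithmic factor the final estimate is designed to eliminate.
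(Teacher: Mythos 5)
Your tile decomposition, the $O(1)$ count of admissible pairs, and the final $\ell^2$ bookkeeping match the paper's structure, but the core per-pair step has a genuine gap. The paper obtains \eqref{eq:pair-L6}, i.e. $\|\mathcal{E}_\lambda(f_T,g_{T'})\|_{L^6_{t,x}}\lesssim\lambda^{-1/2+\varepsilon}\|f_T\|_{L^2}\|g_{T'}\|_{L^2}$, \emph{directly} by a $TT^\ast$ argument with the anisotropic kernel \eqref{eq:kernel-decay}; it never passes through H\"older $L^6\times L^6\to L^3$. Your route does, and you then assert that the tube localization of the product lets you ``upgrade'' the $L^3$ bound to $L^6$. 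That step, as stated, has no mechanism: support on a set of small measure controls a \emph{lower} Lebesgue exponent by a higher one, not the reverse, so concentration alone cannot convert an $L^3$ bound into an $L^6$ bound. To make your route work you would need an additional pointwise input — e.g. a Bernstein/$L^\infty$ bound on each tile-localized packet and an interpolation $\|h\|_{L^6}\le\|h\|_{L^3}^{1/2}\|h\|_{L^\infty}^{1/2}$ — or simply the paper's direct kernel estimate. You yourself flag this as the hardest step, but flagging it does not close it.

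The second deviation is the summation step. The paper passes from per-pair to the full sum by Cauchy–Schwarz over $(T,T')$ using only the $O(1)$ admissibility multiplicity and the tile orthogonality \eqref{eq:tile-decomp}; Remark~\ref{rem:l2-role} states explicitly that no curvature of $\{\omega=\mathrm{const}\}$ is used in Lemma~\ref{lem:l2-tiling}. Invoking bilinear decoupling at $\delta(6)=\tfrac14$ here conflates this lemma with the next stage (Lemma~\ref{lem:rank3-local} and Proposition~\ref{prop:rank3-sum}), where the rank-3 restriction produces the stronger exponent $\lambda^{-3/4+\varepsilon}$. If decoupling were genuinely needed merely to sum the tiles, your per-pair bound plus trivial $\ell^2$ summation would not suffice on its own — but it does, which is exactly the point of this lemma; and if decoupling were genuinely applied, the output exponent would not be $\lambda^{-1/2+\varepsilon}$. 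So the decoupling input is both unnecessary and miscalibrated for the statement you are proving.
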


\begin{proof}[Sketch]
Decompose $f=\sum_T f_T$, $g=\sum_{T'} g_{T'}$; then
\[
  \mathcal{E}_\lambda(f,g)=\sum_{T,T'} \mathcal{E}_\lambda(f_T,g_{T'}).
\]
By the angular sparsity \eqref{eq:tile-angle} and admissibility \eqref{eq:pair-admissible}, for fixed $T$ the number of nonzero terms over $T'$ is $O(1)$ (and vice versa). By Cauchy–Schwarz over the indices $(T,T')$,
\begin{equation}\label{eq:pair-CS}
  \big\|\textstyle\sum_{T,T'} \mathcal{E}_\lambda(f_T,g_{T'})\big\|_{L^6}
  \ \lesssim\
  \Big(\sum_{T,T'} \|\mathcal{E}_\lambda(f_T,g_{T'})\|_{L^6}^2\Big)^{1/2}.
\end{equation}
For each pair $(T,T')$ we estimate $\|\mathcal{E}_\lambda(f_T,g_{T'})\|_{L^6}$ via $TT^\ast$ with the anisotropic kernel (see \eqref{eq:kernel-decay}): the transverse radius $\lambda^{-1/2}$ and longitudinal length $\lambda^{-1}$ yield kernel decay sufficient for
\begin{equation}\label{eq:pair-L6}
  \|\mathcal{E}_\lambda(f_T,g_{T'})\|_{L^6_{t,x}}
  \ \lesssim\ \lambda^{-1/2+\varepsilon}\,\|f_T\|_{L^2}\,\|g_{T'}\|_{L^2}.
\end{equation}
Substituting \eqref{eq:pair-L6} into \eqref{eq:pair-CS} and using the orthogonality \eqref{eq:tile-decomp} gives \eqref{eq:l2-tiling-L6}.
\end{proof}

\begin{remark}[Role of the tiled $\ell^2$ structure]\label{rem:l2-role}
Estimate \eqref{eq:l2-tiling-L6} captures the “baseline’’ gain $\lambda^{-1/2}$ arising from the geometry of the tiles $Q_\lambda$ and the local $TT^\ast$ kernel estimate; it does not use the curvature of the level surface $\{\omega=\text{const}\}$. The additional geometric gain $\lambda^{-1/4}$ (for a total of $\lambda^{-3/4}$ for the block of \S\ref{sec:decoupling}) will be obtained at the next step via restriction to a rank~3 surface and bilinear decoupling, see \S\ref{subsec:rank3-restr}.
\end{remark}

\paragraph{Outcome of this step.}
Combining Lemma~\ref{lem:l2-tiling} with almost orthogonality over caps $\theta$ and summation over dyads (see \S\ref{sec:glue}) provides the controlled “tiled’’ portion of the decoupling step. The geometric reinforcement will be established in \S\ref{subsec:rank3-restr}.

\subsection{Restriction of packets to rank~3 surfaces}\label{subsec:rank3-restr}

Pass from integration over the entire frequency rectangle to integration over level surfaces of the bilinear phase. Recall the notation \(\omega(\xi,\eta)=|\xi|+|\eta|-|\xi+\eta|\) from~\eqref{eq:phase-omega} and define, for a parameter \(c\in\mathbb{R}\) and fixed dyad \(\lambda\gg1\),
\begin{equation}\label{eq:Sigma-def}
  \Sigma_{\lambda,c}
  :=\Bigl\{(\xi,\eta)\in\mathbb{R}^3\times\mathbb{R}^3:\ |\xi|\sim|\eta|\sim\lambda,\ \omega(\xi,\eta)=c\Bigr\}.
\end{equation}
For an admissible pair of tiles \((T,T')\) (see~\eqref{eq:tile-def}–\eqref{eq:pair-admissible}) and the bilinear operator \(\mathcal{E}_\lambda\) from~\eqref{eq:Elam-def}, the coarea formula gives the representation
\begin{equation}\label{eq:coarea}
  \mathcal{E}_\lambda(f_T,g_{T'})(t,x)
  \;=\;
  \int_{\mathbb{R}} e^{itc}\!
  \iint_{(T\times T')\cap \Sigma_{\lambda,c}}
  e^{i x\cdot(\xi+\eta)}\, a_{\lambda,c}(\xi,\eta)\,
  \widehat{f_T}(\xi)\,\widehat{g_{T'}}(\eta)\,
  d\sigma_{\Sigma_{\lambda,c}}(\xi,\eta)\,dc,
\end{equation}
where \(a_{\lambda,c}(\xi,\eta)\simeq |\nabla\omega(\xi,\eta)|^{-1}\) is a smooth factor. In the wide region \(|\xi|\sim|\eta|\sim\lambda\), \(\angle(\xi,\eta)\gtrsim\lambda^{-1/2}\), the weight \(|\nabla\omega|^{-1}\) is uniformly controlled and may be absorbed into the amplitude without changing the frequency exponent (see also the remark in \S\ref{sec:phase} and the dictionary in App.~\ref{app:dictionary}).

\begin{lemma}[Bilinear restriction on a rank~3 surface]\label{lem:rank3-local}
Let \((T,T')\) be an admissible pair of tiles \eqref{eq:pair-admissible} inside a single cap \(\Theta_{\lambda,\theta}\). Then for any \(\varepsilon>0\)
\begin{equation}\label{eq:rank3-local}
  \bigl\|\mathcal{E}_\lambda(f_T,g_{T'})\bigr\|_{L^6_{t,x}}
  \;\lesssim_\varepsilon\;
  \lambda^{-3/4+\varepsilon}\,\|f_T\|_{L^2}\,\|g_{T'}\|_{L^2}.
\end{equation}
\end{lemma}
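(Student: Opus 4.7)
The plan is to upgrade the baseline tile estimate of Lemma~\ref{lem:l2-tiling} by exploiting the curvature of the bilinear phase $\omega$ along level surfaces, gaining an extra factor $\lambda^{-1/4}$ on top of the $\lambda^{-1/2+\varepsilon}$ supplied by the local $TT^\ast$ argument. The three essential ingredients are already in place: the coarea representation \eqref{eq:coarea}, the anisotropic renormalization \eqref{eq:anisorenorm}, and the nondegeneracy $\det A_e\gtrsim 1$ from Lemma~\ref{lem:detA}.

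First, I would use \eqref{eq:coarea} to recast $\mathcal{E}_\lambda(f_T,g_{T'})(t,x)$ as a one-parameter $c$-integral of bilinear Fourier extension operators $E_{\lambda,c}$ on the level surfaces $\Sigma_{\lambda,c}$ from \eqref{eq:Sigma-def}, with the coarea weight $a_{\lambda,c}=|\nabla\omega|^{-1}$ uniformly bounded in the wide region \eqref{eq:wide-zone}. The phase $e^{itc}$ makes $t$ dual to $c$, so Plancherel in $t$ reduces the $L^6_{t,x}$ norm to uniform control of $E_{\lambda,c}$ on each $c$-slice, up to a harmless weight factor.

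Second, I would apply the anisotropic dilation $S_\lambda=\operatorname{diag}(\lambda^{1/2},\lambda^{1/2},1)$ from \eqref{eq:anisorenorm} in the adapted frame $(\rho_1,\rho_2,\sigma)$ of \S\ref{subsec:phase-directions}. Under this rescaling the cap $\Theta_{\lambda,\theta}$ becomes a unit box, each $\Sigma_{\lambda,c}$ is pushed forward to a smooth surface of curvature rank $3$ with $\det A_e\gtrsim 1$, and the admissibility condition \eqref{eq:pair-admissible} together with the angular separation \eqref{eq:tile-angle} is preserved into the bilinear transversality required by decoupling. Then I would invoke the $L^6$ bilinear decoupling theorem on the rescaled rank-$3$ surface: with the benchmark exponent $\delta(6)=\tfrac14$, decomposition into subcaps at the tile scale inherited from \eqref{eq:tile-def}, together with the $\ell^2$-orthogonality \eqref{eq:tile-decomp}, yields the additional factor $\lambda^{-1/4+\varepsilon}$. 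Multiplying against the baseline pair estimate \eqref{eq:pair-L6} and unscaling (undoing $S_\lambda$ and reabsorbing the coarea weight and Jacobian) produces the claimed $\lambda^{-3/4+\varepsilon}\,\|f_T\|_{L^2}\|g_{T'}\|_{L^2}$.

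The principal obstacle I anticipate is the uniform control of the curvature of $\Sigma_{\lambda,c}$ across the whole admissible range of $c$. Lemma~\ref{lem:detA} gives the nondegeneracy of the full effective Hessian $A_e$, but for the decoupling input one really needs the second fundamental form of the level surface, after anisotropic rescaling, to stay nondegenerate with constants independent of $c$; this amounts to verifying that the Schur complement of Lemma~\ref{lem:block} behaves uniformly on each slice $\{\omega=c\}$ rather than merely pointwise. A secondary difficulty is the book-keeping of $\varepsilon$-losses: the losses from decoupling and from the $TT^\ast$ kernel bound \eqref{eq:kernel-decay} must fit into a single $\lambda^{\varepsilon}$ budget uniformly in the tile pair $(T,T')$, and the smooth weight $a_{\lambda,c}$ has to be absorbed into the amplitude without spoiling the transversality or the curvature conditions used at the decoupling step.
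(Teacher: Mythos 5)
Your overall route is the paper's: coarea reduction to the level sets \(\Sigma_{\lambda,c}\) via \eqref{eq:coarea}, anisotropic renormalization \(S_\lambda\) from \eqref{eq:anisorenorm}, nondegeneracy of the effective Hessian (Lemma~\ref{lem:detA}) to justify rank-3 bilinear decoupling at \(\delta(6)=\tfrac14\), and a tile-geometry factor \(\lambda^{-1/2}\) on top, exactly as in the paper's sketch and in App.~\ref{app:decoupling}. Your anticipated obstacles (uniformity of the curvature in \(c\), absorption of the coarea weight, \(\varepsilon\)-bookkeeping) are also the ones the paper worries about.

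However, two of your steps would fail as literally written. First, the final combination ``multiplying against the baseline pair estimate \eqref{eq:pair-L6}'' is not a valid inference: \eqref{eq:pair-L6} and the decoupling bound are both upper bounds for the \emph{same} quantity \(\|\mathcal{E}_\lambda(f_T,g_{T'})\|_{L^6_{t,x}}\), and two bounds on the same norm combine only to their minimum, never to their product. In the paper the factor \(\lambda^{-1/2}\) is not an independent estimate to be multiplied in; it is the Jacobian/tile-geometry factor (the \(\lambda^{-1/2}\times\lambda^{-1/2}\times\lambda^{-1}\) dimensions of \(Q_\lambda\)) that comes out of the \emph{same} rescaled decoupling computation, alongside the \(\lambda^{-1/4+\varepsilon}\) from \(\delta(6)=\tfrac14\); your argument needs to be reorganized so that both factors are tracked inside one computation after the \(S_\lambda\) change of variables, rather than by citing \eqref{eq:pair-L6} multiplicatively. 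Second, ``Plancherel in \(t\)'' cannot reduce an \(L^6_{t,x}\) norm to slicewise control: Plancherel is an \(L^2_t\) tool. The superposition over \(c\) in \eqref{eq:coarea} has to be recombined either by Minkowski/Cauchy--Schwarz in \(c\), using that for a fixed admissible pair \((T,T')\) the phase \(\omega\) varies over a short \(c\)-interval (so the loss is harmless), or by applying the decoupling input directly to a thickened neighborhood of \(\Sigma_{\lambda,c}\); the paper's write-up absorbs the weight \(|\nabla\omega|^{-1}\) into the amplitude and never invokes \(L^2\)-duality in time. With the combination step restated in this form your proposal coincides with the paper's argument; as written, the product \(\lambda^{-1/2}\cdot\lambda^{-1/4+\varepsilon}\) is asserted rather than derived.
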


\begin{proof}[Idea of the proof]
After the reduction~\eqref{eq:coarea}, the integral is taken over \(\Sigma_{\lambda,c}\), for which in the wide region the effective phase Hessian has rank~3 (see \S\ref{subsec:phase-det}). One applies bilinear decoupling for rank~3 surfaces in the norm \(L^2_\xi\times L^2_\eta\to L^6_{t,x}\), yielding \(\lambda^{-1/4+\varepsilon}\) at the level of a single “unit’’ tile after anisotropic renormalization, and an additional \(\lambda^{-1/2}\) from the geometry of the tiles \(Q_\lambda\) (\(\lambda^{-1/2}\times\lambda^{-1/2}\times \lambda^{-1}\)); together this gives~\eqref{eq:rank3-local}. A full $TT^\ast$ write-up is given in App.~\ref{app:decoupling}; see also \cite{GuthIliopoulouYang2024}.
\end{proof}

\begin{proposition}[Decoupling enhancement for the sum over tiles]\label{prop:rank3-sum}
Let \(f,g\in L^2(\mathbb{R}^3)\) be localized in \(\Theta_{\lambda,\theta}\), with \(f=\sum_T f_T\), \(g=\sum_{T'} g_{T'}\) the \(\ell^2\)-tile decompositions~\eqref{eq:tile-decomp}. Then
\begin{equation}\label{eq:rank3-sum}
  \bigl\|\mathcal{E}_\lambda(f,g)\bigr\|_{L^6_{t,x}}
  \;\lesssim_\varepsilon\;
  \lambda^{-3/4+\varepsilon}\,\|f\|_{L^2}\,\|g\|_{L^2}.
\end{equation}
\end{proposition}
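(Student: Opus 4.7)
The plan is to lift the single-pair bound \eqref{eq:rank3-local} to the full tile sum in four steps: (i) expand $\mathcal{E}_\lambda(f,g)$ by \eqref{eq:tile-decomp} and discard non-admissible pairs; (ii) invoke an $\ell^2$-square-function inequality in $L^6_{t,x}$, justified by bilinear decoupling on the rank-$3$ level surface $\Sigma_{\lambda,c}$; (iii) insert the per-pair estimate \eqref{eq:rank3-local}; (iv) close the sum using the fact that the admissibility graph has degree $O(1)$.

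First I would write $\mathcal{E}_\lambda(f,g)=\sum_{T,T'}\mathcal{E}_\lambda(f_T,g_{T'})$ and discard non-admissible pairs: the spatial Fourier support of $\mathcal{E}_\lambda(f_T,g_{T'})$ lies in $T+T'$, so pairs violating \eqref{eq:pair-admissible} live outside the output corridor $\{|\zeta|\lesssim\lambda\}$ probed by the bilinear block. By \eqref{eq:pair-admissible}--\eqref{eq:tile-angle} each $T$ has $O(1)$ admissible partners $T'$ and vice versa. Using the coarea representation \eqref{eq:coarea}, each remaining output is concentrated in $(x,t)$-Fourier on a $\lambda^{-1/2}$-cap of the rank-$3$ hypersurface $\Sigma_{\lambda,c}$, with nondegeneracy of the second fundamental form inherited from Lemma~\ref{lem:detA} after the anisotropic renormalization \eqref{eq:anisorenorm}. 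This is precisely the hypothesis for $\ell^2$-decoupling into $L^6$ (cf.\ App.~\ref{app:decoupling} and \cite{GuthIliopoulouYang2024}), giving
\[
\Bigl\|\sum_{(T,T')\text{ adm}}\mathcal{E}_\lambda(f_T,g_{T'})\Bigr\|_{L^6_{t,x}}\ \lesssim_\varepsilon\ \lambda^{\varepsilon}\Bigl(\sum_{(T,T')\text{ adm}}\|\mathcal{E}_\lambda(f_T,g_{T'})\|_{L^6_{t,x}}^2\Bigr)^{1/2}.
\]

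Next I would insert the per-pair bound \eqref{eq:rank3-local}, yielding a square sum dominated by $\lambda^{-3/4+\varepsilon}$ times $\bigl(\sum_{(T,T')\text{ adm}}\|f_T\|_{L^2}^2\|g_{T'}\|_{L^2}^2\bigr)^{1/2}$. The combinatorial step exploits the $O(1)$ degree of the admissibility graph and \eqref{eq:tile-decomp}:
\[
\sum_{(T,T')\text{ adm}}\|f_T\|_{L^2}^2\|g_{T'}\|_{L^2}^2\ =\ \sum_T\|f_T\|_{L^2}^2\sum_{T'\sim T}\|g_{T'}\|_{L^2}^2\ \le\ \|g\|_{L^2}^2\sum_T\|f_T\|_{L^2}^2\ \lesssim\ \|f\|_{L^2}^2\|g\|_{L^2}^2.
\]
Combining with the decoupling estimate and renaming $\varepsilon$ gives \eqref{eq:rank3-sum}, since $\varepsilon>0$ is arbitrary.

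The hard part, I expect, is the rigorous justification of the $\ell^2$-decoupling step (ii). One must verify that the frequency tiles $Q_\lambda$ of \eqref{eq:tile-def} project, under the coarea slicing \eqref{eq:coarea}, onto genuine $\lambda^{-1/2}$-caps on $\Sigma_{\lambda,c}$ satisfying the curvature hypotheses of Guth--Iliopoulou--Yang; the anisotropic renormalization \eqref{eq:anisorenorm} together with Lemma~\ref{lem:block} is designed exactly for this, and the weight $|\nabla\omega|^{-1}$ is absorbed as noted after \eqref{eq:coarea}. A further subtlety is uniformity in the level parameter $c$ of the decoupling constants, which is standard once the renormalization makes $\det A_e\gtrsim 1$. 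The remaining combinatorial reduction (iv) and the passage through \eqref{eq:rank3-local} are routine.
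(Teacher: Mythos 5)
Your proposal is correct and follows essentially the same route as the paper's proof: expand into admissible tile pairs, pass to an $\ell^2$ square sum of $\|\mathcal{E}_\lambda(f_T,g_{T'})\|_{L^6}$, insert the per-pair bound \eqref{eq:rank3-local}, and close via the $O(1)$ admissibility degree and \eqref{eq:tile-decomp}. If anything, you are more careful than the paper on the key square-function step in $L^6$, which you justify via decoupling on $\Sigma_{\lambda,c}$ (at the cost of a harmless $\lambda^{\varepsilon}$), whereas the paper asserts it under the label of ``almost-orthogonality'' without further detail.
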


\begin{proof}
By almost-orthogonality and the local estimate~\eqref{eq:rank3-local}:
\[
  \|\mathcal{E}_\lambda(f,g)\|_{L^6}
  \;=\;
  \Bigl\|\sum_{T,T'} \mathcal{E}_\lambda(f_T,g_{T'})\Bigr\|_{L^6}
  \;\lesssim\;
  \Bigl(\sum_{T,T'} \|\mathcal{E}_\lambda(f_T,g_{T'})\|_{L^6}^2\Bigr)^{1/2}.
\]
Substituting \eqref{eq:rank3-local} and using that for each \(T\) the number of admissible \(T'\) is $O(1)$ (see~\eqref{eq:pair-admissible}), we obtain
\[
  \|\mathcal{E}_\lambda(f,g)\|_{L^6}
  \;\lesssim_\varepsilon\;
  \lambda{-3/4+\varepsilon}
  \Bigl(\sum_T \|f_T\|_{L^2}^2\Bigr)^{1/2}
  \Bigl(\sum_{T'} \|g_{T'}\|_{L^2}^2\Bigr)^{1/2},
\]
which is \eqref{eq:rank3-sum} by \eqref{eq:tile-decomp}.
\end{proof}

\begin{remark}[On the exponent \texorpdfstring{$\delta(6)$}{delta(6)}]\label{rem:delta6}
Estimate~\eqref{eq:rank3-sum} corresponds to \(\delta(6)=\tfrac14\) for rank~3 surfaces in the ideal case of uniform curvature. In the regime of weak minimal curvature one allows deterioration to \(\delta(6)=\tfrac16+o(1)\); this does not change the final balance, since the wide region still decays faster than \(\lambda^{-1}\) and does not determine the dyadic outcome (see \S\ref{sec:scope} and the discussion in App.~\ref{app:decoupling}). We will use the form \(\tfrac14\) as a benchmark, without claiming optimality in every geometry.
\end{remark}

\subsection{Fixing the parameter \texorpdfstring{$\delta(6)=\tfrac14$}{delta(6)=1/4}}\label{subsec:delta-fix}

We introduce the notation for the bilinear decoupling exponent: the \emph{number} $\delta(6)$ denotes the exponent in the inequality
\begin{equation}\label{eq:delta6-def}
  \bigl\|\mathcal{E}_\lambda(f,g)\bigr\|_{L^6_{t,x}}
  \ \lesssim_{\varepsilon}\ \lambda^{-1/2-\delta(6)+\varepsilon}\,\|f\|_{L^2}\,\|g\|_{L^2},
\end{equation}
where $\mathcal{E}_\lambda$ is the operator in \eqref{eq:Elam-def}, and both functions $f,g$ are localized in the same cap $\Theta_{\lambda,\theta}$ and decomposed into tiles \eqref{eq:tile-def}.

\paragraph{Benchmark under uniform curvature.}
The local restriction to a rank~3 level surface (Lemma~\ref{lem:rank3-local}) combined with the $\ell^2$–tile decomposition (Lemma~\ref{lem:l2-tiling}) yields
\begin{equation}\label{eq:decoupling-benchmark}
  \bigl\|\mathcal{E}_\lambda(f,g)\bigr\|_{L^6_{t,x}}
  \ \lesssim_{\varepsilon}\ \lambda^{-1/2}\,\lambda^{-1/4+\varepsilon}\,
  \|f\|_{L^2}\,\|g\|_{L^2}
  \ =\ \lambda^{-3/4+\varepsilon}\,\|f\|_{L^2}\,\|g\|_{L^2}.
\end{equation}
Comparing \eqref{eq:decoupling-benchmark} with \eqref{eq:delta6-def} fixes the \emph{benchmark}
\begin{equation}\label{eq:delta6-quarter}
  \delta(6)=\tfrac14\qquad\text{(rank~3 surface with uniform curvature).}
\end{equation}

\paragraph{Weak minimal curvature.}
In a geometry where the minimal principal curvature on the level surface $\{\omega=\mathrm{const}\}$ can degenerate as $\kappa_{\min}\sim \lambda^{-1}$, an $\varepsilon$–free form is available with relaxation to
\begin{equation}\label{eq:delta6-min}
  \delta(6)=\tfrac16+o(1)\,,
\end{equation}
which still ensures that the wide region decays faster than $\lambda^{-1}$ after combining with the phase and Strichartz gains (see \S\ref{sec:phase} and \S\ref{subsec:wp-stri}). We will use \eqref{eq:delta6-quarter} as a guideline (without claiming optimality for every particular geometry), and \eqref{eq:delta6-min} as an admissible alternative that does not affect the final balance of exponents.

\paragraph{Consequence for the wide region.}
Combining \eqref{eq:decoupling-benchmark} with the phase–time gain from \S\ref{subsec:phase-det} and \S\ref{subsec:wp-stri}, we obtain for the “wide’’ part of the resonant contribution at one dyad
\begin{equation}\label{eq:wide-summary}
  \|R^{\mathrm{wide}}_\lambda(u)\|_{\dot H^{-1}}
  \ \lesssim_{\varepsilon}\ \lambda^{-11/4+\varepsilon}\,
  \|P_\lambda u\|_{\dot H^{1/2}}\,
  \|P_\lambda u\|_{\dot H^{1}},
\end{equation}
whereas the “narrow’’ region gives $\lambda^{-1}$ (see \S\ref{sec:narrow}). Hence, \eqref{eq:delta6-quarter}, or even \eqref{eq:delta6-min}, does not change the outcome at a single dyad: it is determined by the narrow region and equals $\lambda^{-1}$.

\newpage

\section{Narrow region analysis}\label{sec:narrow}

\subsection{Geometry of the region \texorpdfstring{$|\xi+\eta|\le \lambda^{-\delta}$}{|xi+eta| ≤ λ^{-δ}}}\label{subsec:narrow-geometry}

At the level of a fixed frequency $\lambda\gg1$ we introduce the \emph{narrow region} of frequency pairs
\begin{equation}\label{eq:narrow-set}
  \mathcal{N}_{\lambda,\delta}
  :=\Bigl\{(\xi,\eta)\in\mathbb{R}^3\times\mathbb{R}^3:\ |\xi|\sim|\eta|\sim\lambda,\ \ |\tau|:=|\xi+\eta|\le \lambda^{-\delta}\Bigr\},
  \qquad \tfrac12<\delta<\tfrac34,
\end{equation}
where $\tau:=\xi+\eta$ is the sum frequency variable. This region corresponds to the almost-collinear high–high $\to$ low interaction regime.

\paragraph{Almost collinearity.}
From the condition $|\tau| \le \lambda^{-\delta}$ it follows that
\begin{equation}\label{eq:narrow-angle}
  \pi-\angle(\xi,-\eta)
  \ \lesssim\ \frac{|\tau|}{\lambda}\ \le\ \lambda^{-1-\delta},
\end{equation}
that is, the vectors $\xi$ and $-\eta$ are almost parallel. The narrow conical region around the interaction diagonal has angular width $\lambda^{-1-\delta}$.

\paragraph{Number of caps involved.}
For fixed $\lambda$ the shell $\{|\zeta|\sim\lambda\}$ is covered by angular caps $\Theta_{\lambda,\theta}$ of radius $\lambda^{-1/2}$, $\theta\in\Lambda_\lambda$, $|\Lambda_\lambda|\simeq\lambda$ (see \S\ref{subsec:wp-construction}). Since the grid step in angle is $\lambda^{-1/2}$, the narrow cone of width $\lambda^{-1-\delta}$ still intersects about $\lambda$ directions $\theta$. In particular, the \emph{angular} $\ell^2$–orthogonality and the decoupling factor from \S\ref{sec:decoupling} retain their structure in the narrow region.

\paragraph{Layer thickness and volume estimate.}
The global volume of the narrow region in $(\xi,\eta)$–space is estimated by
\begin{equation}\label{eq:narrow-vol-global}
  \operatorname{Vol}\,\mathcal{N}_{\lambda,\delta}\ \simeq\ \lambda^{2-3\delta},
\end{equation}
and with additional angular localization to one cap $\Theta_{\lambda,\theta}$ by
\begin{equation}\label{eq:narrow-vol-cap}
  \operatorname{Vol}\bigl(\mathcal{N}_{\lambda,\delta}\cap(\Theta_{\lambda,\theta}\times\Theta_{\lambda,\theta})\bigr)\ \simeq\ \lambda^{1-3\delta}.
\end{equation}
In particular, for the convenient choice $\delta=\tfrac{2}{3}$ we obtain $\operatorname{Vol}\,\mathcal{N}_{\lambda,2/3}\simeq \lambda^{0}$ and the per-cap scale $\simeq \lambda^{-1}$.

\paragraph{Role in the proof.}
In the narrow region the longitudinal direction degenerates, and the phase curvature does not yield a stable gain; instead of phase IBP an \emph{energy} argument in $\dot H^{-1}$ with null–form suppression is applied, see \S\ref{subsec:narrow-energy}. The formulas \eqref{eq:narrow-angle}–\eqref{eq:narrow-vol-cap} will be used in the counting of pairs and frequency patching in \S\ref{subsec:narrow-count} and \S\ref{sec:glue}.

\subsection{Energy estimate in the narrow region}\label{subsec:narrow-energy}

In the narrow region \eqref{eq:narrow-set} the phase method (see also \eqref{eq:collinearity}) degenerates, and the estimate is carried out energetically in $\dot H^{-1}$ with null–form suppression.

\medskip
\noindent\textbf{Resonant block and divergence transfer.}
Let $u_\lambda := P_\lambda u$ be the dyadic component localized at $|\xi|\sim\lambda$. The contribution of the narrow region is written as
\begin{equation}\label{eq:narrow-Rdef}
  R^{\mathrm{nar}}_\lambda(u)
  := P_{\mathrm{low}}\;\nabla\!\cdot\!\bigl(u_\lambda\otimes u_\lambda\bigr),
\end{equation}
where $P_{\mathrm{low}}$ is the projector to the output frequencies $|\zeta|\lesssim \lambda^{-\delta}$ (see \eqref{eq:narrow-set}). Assume $\operatorname{div}u=0$.

\medskip
\noindent\textbf{Null–form factor (geometry).}
On the Fourier side, after symmetrization and using $\operatorname{div}u=0$, the tensor symbol equals the difference of projectors along the input frequencies:
\begin{equation}\label{eq:null-symbol1}
  \widetilde B_{ij}(\xi,\eta)
  = \frac{\xi_i\xi_j}{|\xi|^2} - \frac{\eta_i\eta_j}{|\eta|^2},
  \qquad \hat\xi:=\frac{\xi}{|\xi|},\ \hat\eta:=\frac{\eta}{|\eta|}.
\end{equation}
In the narrow region $|\tau|\le \lambda^{-\delta}$ (with $\tau:=\xi+\eta$) we obtain the smallness
\begin{equation}\label{eq:null-suppress}
  \|\widetilde B(\xi,\eta)\|
  = \|P_{\hat\xi}-P_{\hat\eta}\|_{\mathrm{op}}
  = \sin\angle(\hat\xi,\hat\eta)
  \lesssim \angle(\hat\xi,-\hat\eta)
  \lesssim \frac{|\tau|}{\lambda},
\end{equation}
since for $|\xi|\sim|\eta|\sim\lambda$ one has $|\tau|^2=|\xi+\eta|^2=4\lambda^2\cos^2(\theta/2)$, where $\theta=\angle(\xi,\eta)$, hence $\angle(\hat\xi,-\hat\eta)\lesssim |\tau|/\lambda$.

\medskip
\noindent\textbf{$L^2$ estimate (guideline).}
Transferring one gradient from $\nabla\!\cdot(u_\lambda\otimes u_\lambda)$ to a factor $u_\lambda$ and using \eqref{eq:null-suppress}, we obtain
\begin{equation}\label{eq:narrow-L2}
  \|R^{\mathrm{nar}}_\lambda(u)\|_{L^2_x}
  \ \lesssim\ \frac{|\tau|}{\lambda}\,\|u_\lambda\|_{L^2_x}\,\|\nabla u_\lambda\|_{L^2_x}
  \ \lesssim\ \lambda^{-1-\delta}\,\|u_\lambda\|_{L^2_x}\,\|\nabla u_\lambda\|_{L^2_x}.
\end{equation}
For a component localized in caps of radius $\lambda^{-1/2}$, Bernstein estimates give
\[
  \|u_\lambda\|_{L^2}\ \simeq\ \lambda^{-1/2}\,\|u_\lambda\|_{\dot H^{1/2}},
  \qquad
  \|\nabla u_\lambda\|_{L^2}\ \simeq\ \lambda^{1/2}\,\|u_\lambda\|_{\dot H^{1/2}},
\]
and therefore
\begin{equation}\label{eq:narrow-L2-Half}
  \|R^{\mathrm{nar}}_\lambda(u)\|_{L^2}
  \ \lesssim\ \lambda^{-1-\delta}\,\|u_\lambda\|_{\dot H^{1/2}}^2.
\end{equation}

\medskip
\noindent\textbf{Direct passage to $\dot H^{-1}$ without the factor $\lambda^\delta$.}
The key step is to use that $|\nabla|^{-1}\nabla\!\cdot$ is a multiplier of order zero in $\dot H^{-1}$, and the factor $|\tau|/\lambda$ from \eqref{eq:null-suppress} compensates the weight $|\tau|^{-1}$ in the $\dot H^{-1}$ norm. Indeed, in Fourier,
\[
\widehat{R^{\mathrm{nar}}_\lambda}(\tau)
= \mathbf{1}_{\{|\tau|\lesssim \lambda^{-\delta}\}}\, (i\tau)\!\cdot\!\int\!\widetilde B(\xi,\eta)\,\widehat{u_\lambda}(\xi)\,\widehat{u_\lambda}(\eta)\,\delta(\tau-\xi-\eta)\,d\xi d\eta,
\]
and therefore on the support of $P_{\mathrm{low}}$
\[
|\tau|^{-1}\,\big| (i\tau)\cdot \widetilde B(\xi,\eta)\big|
\ \lesssim\ \lambda^{-1}.
\]
Hence
\[
\|R^{\mathrm{nar}}_\lambda(u)\|_{\dot H^{-1}}
=\big\|\,|\tau|^{-1}\widehat{R^{\mathrm{nar}}_\lambda}\,\big\|_{L^2_\tau}
\ \lesssim\ \lambda^{-1}\,\big\|\widehat{u_\lambda}*\widehat{\nabla u_\lambda}\big\|_{L^2_\tau}
\ \lesssim\ \lambda^{-1}\,\|u_\lambda\|_{L^2}\,\|\nabla u_\lambda\|_{L^2},
\]
where in the second inequality divergence is transferred to one factor and then the standard bilinear Cauchy–Schwarz estimate on the convolution is used. Applying Bernstein estimates on the annulus $|\xi|\sim\lambda$ gives the final result
\begin{equation}\label{eq:narrow-H-1}
  \|R^{\mathrm{nar}}_\lambda(u)\|_{\dot H^{-1}}
  \ \lesssim\ \lambda^{-1}\,\|u_\lambda\|_{\dot H^{1/2}}\,\|u_\lambda\|_{\dot H^{1}}
  \ \simeq\ \lambda^{-1}\,\|u_\lambda\|_{\dot H^{1/2}}^{2}.
\end{equation}

\begin{remark}[Why not through $\hat\tau$]
Replacing in \eqref{eq:null-symbol} the second projector by $P_{\hat\tau}$ with $\tau=\xi+\eta$ does not yield a universal smallness on the entire narrow region: the angle $\angle(\hat\xi,\hat\tau)$ can be of order $1$ even when $|\tau|\ll\lambda$ (for example, $\xi=(\lambda,0,0)$, $\eta=(-\lambda,\varepsilon,0)$). The correct null–form geometry is based on comparing the directions $\hat\xi$ and $-\hat\eta$, which leads to \eqref{eq:null-suppress}.
\end{remark}

\begin{remark}[On the choice of $\delta$]\label{rem:narrow-delta}
The bound \eqref{eq:narrow-H-1} holds for any $\tfrac12<\delta<\tfrac34$ and does not depend on the specific value of $\delta$. It is convenient to fix $\delta=\tfrac23$ (see \eqref{eq:narrow-set}); in this case the per-cap volume of the narrow region is of order $\lambda^{-1}$, which simplifies the frequency summation.
\end{remark}

\subsection{Pair counting and convergence of the series}\label{subsec:narrow-count}

Split the time axis into windows of length $\lambda^{-1/2}$ (for each dyad $\lambda\gg1$)
\begin{equation}\label{eq:I-lambda}
  I_{\lambda,j}:=\bigl[j\,\lambda^{-1/2},(j+1)\,\lambda^{-1/2}\bigr],\qquad j\in\mathbb{Z},
\end{equation}
and fix the notation for the wave flow
\begin{equation}\label{eq:Y-local}
  S(t)f:=\mathcal{F}^{-1}\!\bigl(e^{it|\xi|}\widehat f(\xi)\bigr).
\end{equation}
At the level of a single dyad we write the resonant block as the sum of wide and narrow parts
\[
  R_\lambda(u)=R^{\mathrm{wide}}_\lambda(u)+R^{\mathrm{nar}}_\lambda(u).
\]
By the results of \S\ref{subsec:rank3-restr}–\S\ref{subsec:delta-fix} (wide region) and \S\ref{subsec:narrow-energy} (narrow region) we have the estimates
\begin{align}
  \|R^{\mathrm{wide}}_\lambda(u)\|_{\dot H^{-1}}
  &\lesssim_\varepsilon \lambda^{-11/4+\varepsilon}\,
    \|P_\lambda u\|_{\dot H^{1/2}}\,
    \|P_\lambda u\|_{\dot H^{1}}, \label{eq:wide-sum-ref}\\
  \|R^{\mathrm{nar}}_\lambda(u)\|_{\dot H^{-1}}
  &\lesssim \lambda^{-1}\,
    \|P_\lambda u\|_{\dot H^{1/2}}\,
    \|P_\lambda u\|_{\dot H^{1}}. \label{eq:narrow-sum-ref}
\end{align}
Therefore,
\begin{equation}\label{eq:dyad-min}
  \|R_\lambda(u)\|_{\dot H^{-1}}
  \ \lesssim_\varepsilon\ 
  \min\!\bigl\{\lambda^{-11/4+\varepsilon},\,\lambda^{-1}\bigr\}\,
  \|P_\lambda u\|_{\dot H^{1/2}}\,
  \|P_\lambda u\|_{\dot H^{1}}
  \ \le\ \lambda^{-1}\,
  \|P_\lambda u\|_{\dot H^{1/2}}\,
  \|P_\lambda u\|_{\dot H^{1}}.
\end{equation}

\paragraph{Convergence over dyads.}
The factor $\lambda^{-1}$ in \eqref{eq:dyad-min} ensures convergence of the sum over high frequencies:
\begin{equation}\label{eq:plain-sum}
  \sum_{\lambda\gg1}\|R_\lambda(u)(t)\|_{\dot H^{-1}}
  \ \lesssim\ 
  \sum_{\lambda\gg1}\lambda^{-1}\,
  \|P_\lambda u(t)\|_{\dot H^{1/2}}\,
  \|P_\lambda u(t)\|_{\dot H^{1}}.
\end{equation}
In what follows (see \S\ref{sec:glue}) we use almost-orthogonality in $\lambda$ and standard
$\ell^2$ arguments to pass from \eqref{eq:plain-sum} to the global form with the norms
$X_{1/2}$ and $X_1$. Here we only note that the series $\sum_{\lambda} \lambda^{-1}$ converges, and the contribution of the wide region
$\lambda^{-11/4+\varepsilon}$ in \eqref{eq:wide-sum-ref} decays much faster and does not affect the outcome at a single dyad.

\begin{remark}[Number of pairs and angular patching]\label{rem:pair-count}
The narrow region \eqref{eq:narrow-set} intersects $\simeq \lambda$ caps $\Theta_{\lambda,\theta}$, so the angular $\ell^2$ orthogonality and the decoupling structure built in \S\ref{sec:decoupling} are preserved without loss in the exponent. This is consistent with the estimate \eqref{eq:narrow-sum-ref} and the subsequent temporal patching over windows \eqref{eq:I-lambda}.
\end{remark}

\subsection{Orthogonality of frequency layers}\label{subsec:glue-orth}

In this subsection we fix the frequency orthogonality of resonant blocks, which ensures $\ell^2$ patching over dyads without logarithmic losses. Notation is consistent with \S\ref{sec:prelim} and \S\ref{sec:main}; see also the dictionary in App.~\ref{app:dictionary}.

\paragraph{Littlewood–Paley orthogonality.}
Let $\{P_N\}_{N\in 2^{\mathbb{Z}}}$ be dyadic projections. Then under frequency separation
\begin{equation}\label{eq:LP-orth}
  \big\langle P_N f,\;P_M g\big\rangle_{L^2}=0
  \qquad\text{if}\qquad \bigl|\log_2(N/M)\bigr|\ge 3 .
\end{equation}
It follows that there is orthogonality in $\dot H^{-1}$ for any $F,G\in\dot H^{-1}(\mathbb{R}^3)$:
\begin{equation}\label{eq:H-1-orth}
  \big\langle P_N F,\;P_M G\big\rangle_{\dot H^{-1}}
  =\big\langle |\nabla|^{-1}P_NF,\;|\nabla|^{-1}P_MG\big\rangle_{L^2}=0
  \quad\text{when}\quad \bigl|\log_2(N/M)\bigr|\ge 3 ,
\end{equation}
since $|\nabla|^{-1}$ commutes with $P_N$ and preserves the separation of spectral supports.

\begin{lemma}[Orthogonality of resonant blocks]\label{lem:R-orth}
Let $R_N(u)$ be the resonant high–high$\to$low component at dyad $N$ (see \S\ref{sec:main}). Then
\begin{equation}\label{eq:R-orth}
  \big\langle R_N(u),\;R_M(u)\big\rangle_{\dot H^{-1}}=0
  \qquad\text{if}\qquad \bigl|\log_2(N/M)\bigr|\ge 3 .
\end{equation}
\end{lemma}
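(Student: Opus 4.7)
The plan is to reduce the claim to the Littlewood--Paley orthogonality already recorded in \eqref{eq:LP-orth}--\eqref{eq:H-1-orth} by exploiting the outer $P_N$ projection in the definition \eqref{eq:def-RN}. Since $R_N(u)=P_N[\,\cdots]$ and $R_M(u)=P_M[\,\cdots]$, the Fourier supports of these two distributions are contained in the dyadic shells $\{|\zeta|\sim N\}$ and $\{|\zeta|\sim M\}$ respectively, inherited from the cutoffs $\psi(|\zeta|/N)$ and $\psi(|\zeta|/M)$ fixed in \eqref{eq:LP}. Under the frequency separation $|\log_2(N/M)|\ge 3$, these shells are disjoint.

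The second step is to transfer this disjointness into the $\dot H^{-1}$ pairing. By definition of the $\dot H^{-1}$ inner product and Plancherel,
\begin{equation*}
  \bigl\langle R_N(u),\,R_M(u)\bigr\rangle_{\dot H^{-1}}
  \;=\;\int_{\mathbb{R}^3}|\zeta|^{-2}\,\widehat{R_N(u)}(\zeta)\,\overline{\widehat{R_M(u)}(\zeta)}\,d\zeta,
\end{equation*}
and the integrand vanishes pointwise on $\mathbb{R}^3$ because $\operatorname{supp}\widehat{R_N(u)}\cap \operatorname{supp}\widehat{R_M(u)}=\varnothing$ on the stated frequency separation. Equivalently, writing the pairing through the isometry $|\nabla|^{-1}:\dot H^{-1}\to L^2$ reduces \eqref{eq:R-orth} to the $L^2$--orthogonality $\langle |\nabla|^{-1}P_N F,\,|\nabla|^{-1}P_M G\rangle_{L^2}=0$, which is precisely the content of \eqref{eq:H-1-orth} applied to the inner bilinear expressions in brackets.

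There is essentially no obstacle here: the lemma is a bookkeeping statement about spectral supports, and the only nontrivial input is the fact that the outer projector in \eqref{eq:def-RN} is preserved through the bilinear expression. One should only remark that the subtracted paraproducts $(P_Mu\cdot\nabla)(P_Nu)+(P_Nu\cdot\nabla)(P_Mu)$ inside the brackets do not widen the output Fourier support beyond that imposed by the outer $P_N$, and that $|\nabla|^{-1}$ commutes with dyadic projections and hence does not broaden the spectral localization. With these two trivial observations, \eqref{eq:R-orth} is immediate, and the $\ell^2$--summation over dyads announced in \S\ref{subsec:narrow-count} proceeds without logarithmic loss.
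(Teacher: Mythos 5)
Your proposal is correct and follows essentially the same route as the paper: both arguments use the outer $P_N$ in \eqref{eq:def-RN} to place the spectral support of $R_N(u)$ in the shell $\{|\zeta|\sim N\}$, and then reduce \eqref{eq:R-orth} to the $L^2$/Plancherel orthogonality \eqref{eq:H-1-orth} via the isometry $|\nabla|^{-1}$, which commutes with the dyadic projections. The extra remark about the subtracted paraproducts not enlarging the output support is a harmless elaboration of the same point.
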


\begin{proof}
By definition, $R_N(u)$ has spectral support in the annulus $\{|\xi|\sim N\}$. Therefore, $|\nabla|^{-1}R_N(u)$ and $|\nabla|^{-1}R_M(u)$ have separated spectral supports when $|\log_2(N/M)|\ge 3$. Apply \eqref{eq:H-1-orth}.
\end{proof}

\begin{corollary}[$\ell^2$ patching over dyads]\label{cor:l2-glue}
For any finite (or absolutely convergent) sum $\sum_N a_N R_N(u)$ one has
\begin{equation}\label{eq:l2-sum}
  \big\|\textstyle\sum_N a_N R_N(u)\big\|_{\dot H^{-1}}^2
  \;=\;\sum_N |a_N|^2\,\|R_N(u)\|_{\dot H^{-1}}^2 .
\end{equation}
In particular,
\begin{equation}\label{eq:l2-sum-simple}
  \big\|R(u)\big\|_{\dot H^{-1}}^2
  \;=\;\sum_N \|R_N(u)\|_{\dot H^{-1}}^2,\qquad R(u):=\sum_N R_N(u).
\end{equation}
\end{corollary}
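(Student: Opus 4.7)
The plan is to reduce the identity to the frequency orthogonality of Lemma~\ref{lem:R-orth} by bilinear expansion in the $\dot H^{-1}$ inner product, then close via Plancherel. First I would write, by bilinearity,
\[
\Bigl\|\sum_N a_N R_N(u)\Bigr\|_{\dot H^{-1}}^2
=\sum_{N,M}a_N\,\overline{a_M}\,\bigl\langle R_N(u),R_M(u)\bigr\rangle_{\dot H^{-1}},
\]
with absolute convergence guaranteed by the dyadic bound $\|R_N(u)\|_{\dot H^{-1}}\lesssim N^{-1}\|u\|_{\dot H^{1/2}}\|u\|_{\dot H^{1}}$ from Theorem~\ref{thm:main} together with the assumption that the outer series is finite or absolutely convergent. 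Lemma~\ref{lem:R-orth} then kills every cross term with $|\log_2(N/M)|\ge 3$.

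Second, I would pass to the Fourier side via Plancherel in $\dot H^{-1}$,
\[
\bigl\langle R_N(u),R_M(u)\bigr\rangle_{\dot H^{-1}}
=\int_{\mathbb{R}^3}|\xi|^{-2}\,\widehat{R_N(u)}(\xi)\,\overline{\widehat{R_M(u)}(\xi)}\,d\xi,
\]
and use that $\widehat{R_N(u)}$ is supported in the annulus $A_N=\{|\xi|\in[N/2,2N]\}$ (by the outermost $P_N$ in~\eqref{eq:def-RN} and the support of $\psi$ fixed in \S\ref{subsec:spaces-filters}). For $|\log_2(N/M)|=2$ the annuli $A_N$ and $A_M$ meet only on the measure-zero boundary circle, so these cross terms also vanish. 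To absorb the last adjacent band $|\log_2(N/M)|=1$ and recover the \emph{strict} equality, I would reinterpret the dyadic resolution via the non-overlapping half-open shells $A_N^{\sharp}:=\{|\xi|\in(N,2N]\}$, which partition $\mathbb{R}^3\setminus\{0\}$; defining the corresponding sharp projectors $P_N^{\sharp}$ in place of $P_N$ in~\eqref{eq:def-RN}, the resulting blocks $R_N^{\sharp}(u)$ have pairwise disjoint spectral supports, so the bilinear form collapses to its diagonal and Plancherel yields~\eqref{eq:l2-sum} as a genuine Parseval identity.

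The main obstacle is precisely this last step. With the smooth Littlewood--Paley $\psi$ of \S\ref{subsec:spaces-filters} the supports $A_N$ and $A_{2N}$ overlap on the positive-measure annulus $\{|\xi|\in[N,2N]\}$, so the adjacent cross term $\langle R_N(u),R_{2N}(u)\rangle_{\dot H^{-1}}$ is generically nonzero and strict Parseval cannot hold \emph{verbatim} for the smooth projectors as originally defined. The honest resolution is therefore one of two routes: (i) the non-overlapping reinterpretation above, for which \eqref{eq:l2-sum} is a clean equality; or (ii) accepting \eqref{eq:l2-sum} as an $\asymp$-identity with an absolute constant bounded by the finite-overlap multiplicity of $\psi$, established by splitting the dyadic index set into two residue classes modulo $2$ (within each class any two indices are separated by $\ge 2$, so Lemma~\ref{lem:R-orth} and the support disjointness above give the diagonal identity) and assembling by Cauchy--Schwarz. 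Both routes preserve the $\ell^2$ structure with an absolute constant, which is all that the dyadic summation in \S\ref{sec:glue} requires for the log-free estimate.
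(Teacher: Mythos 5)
Your expansion of the square of the norm into pairwise $\dot H^{-1}$ inner products followed by an appeal to Lemma~\ref{lem:R-orth} is precisely the paper's own proof (the paper's argument is literally that one line). Where you differ is that you correctly notice what the paper glosses over: Lemma~\ref{lem:R-orth} only annihilates cross terms with $|\log_2(N/M)|\ge 3$, and with the smooth Littlewood--Paley function $\psi$ fixed in \S\ref{subsec:spaces-filters} the spectral supports of $P_N$ and $P_{2N}$ (hence of $R_N(u)$ and $R_{2N}(u)$) overlap on the annulus $\{N\le|\xi|\le 2N\}$ of positive measure, so the adjacent cross terms are generically nonzero and the strict equalities \eqref{eq:l2-sum}--\eqref{eq:l2-sum-simple} cannot hold verbatim; the separation-two terms vanish only because the closed shells touch on a measure-zero sphere, as you observe. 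So the defect lies in the corollary as stated (and in the paper's one-line proof), not in your analysis.

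Both of your repairs are sound and standard: redefining the blocks with sharp half-open shells turns \eqref{eq:l2-sum} into a genuine Parseval identity (at the cost of changing the object $R_N(u)$ of \eqref{eq:def-RN}), while keeping the smooth projectors and splitting the dyadic indices into residue classes (within a class the cross terms die by Lemma~\ref{lem:R-orth} for separation $\ge 3$ and by measure-zero support intersection for separation exactly $2$), then reassembling by Cauchy--Schwarz, gives a two-sided $\ell^2$ equivalence with an absolute constant. The latter $\asymp$ form is all that the frequency summation in \S\ref{sec:glue} (e.g.\ \eqref{eq:glue-min}, \eqref{eq:global-Rlambda}) actually uses, so the log-free conclusion is unaffected; the honest fix is to restate the corollary with $\asymp$ in place of $=$, exactly as you propose.
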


\begin{proof}
Expand the norm on the left as a sum of pairwise inner products in $\dot H^{-1}$ and use Lemma~\ref{lem:R-orth}.
\end{proof}

\paragraph{Consequences for the global estimate.}
Combining \eqref{eq:l2-sum-simple} with the per-cap and tiled estimates of \S\ref{sec:decoupling}–\ref{sec:narrow} at the level of one dyad, we obtain
\begin{equation}\label{eq:glue-min}
  \|R_N(u)\|_{\dot H^{-1}}
  \;\lesssim_\varepsilon\; \min\!\bigl\{N^{-11/4+\varepsilon},\,N^{-1}\bigr\}\,
  \|P_N u\|_{\dot H^{1/2}}\;\|P_N u\|_{\dot H^{1}},
\end{equation}
where $N^{-11/4+\varepsilon}$ corresponds to the wide region (\S\ref{subsec:rank3-restr}–\S\ref{subsec:delta-fix}), and $N^{-1}$ to the narrow region (\S\ref{sec:narrow}). Summation over $N$ in \eqref{eq:l2-sum-simple} converges thanks to $\sum_N N^{-2}<\infty$; the temporal patching is performed in \S\ref{subsec:temporal-patching}.

\subsection{Final table of exponents}\label{subsec:degree-table}

We summarize the scaling exponents obtained in the wide- and narrow-angle regions in a compact form. In the table below, “exponent at frequency $\lambda$’’ is understood as the decay exponent of the corresponding block (or the “compensating’’ exponent for the narrow region), see \S\ref{sec:phase}, \S\ref{subsec:wp-stri}, \S\ref{sec:decoupling}, \S\ref{sec:narrow}.

\begin{center}
\begin{tabular}{|l|c|c|}
\hline
Source of gain/loss & Region & Exponent at $\lambda$ \\
\hline
Two angular IBPs $+$ temporal localization ($TT^\ast$) & wide & $\lambda^{-3/2}$ \\
\hline
Two local Strichartz estimates (one per flow) & wide & $\lambda^{-1}$ \\
\hline
Bilinear decoupling, $\delta(6)=\tfrac14$ & wide & $\lambda^{-1/4}$ \\
\hline
Wide region combined & — & $\lambda^{-11/4}$ \\
\hline
Null\textendash form $+$ mapping $P_{\mathrm{low}}\!:L^2\!\to\!\dot H^{-1}$ at $\delta=\tfrac23$ & narrow & $\lambda^{+5/4}$ \\
\hline
Outcome on one dyad & — & $\min\{\lambda^{-11/4},\,\lambda^{-1}\}=\lambda^{-1}$ \\
\hline
\end{tabular}
\end{center}

This summary is consistent with the precise single-dyad estimate,
\[
  \|R_\lambda(u)\|_{\dot H^{-1}}
  \ \lesssim_\varepsilon\ \min\!\bigl\{\lambda^{-11/4+\varepsilon},\,\lambda^{-1}\bigr\}\,
  \|P_\lambda u\|_{\dot H^{1/2}}\ \|P_\lambda u\|_{\dot H^{1}}\!,
\]
see \eqref{eq:glue-min}. In particular, the outcome per dyad is determined by the narrow region (see \S\ref{subsec:narrow-energy}), while the wide region decays faster and does not affect the minimum.

\begin{remark}[On “multiplying exponents’’]\label{rem:degree-mnemonic}
Short notations such as 
$\lambda^{-3/2}\cdot \lambda^{-1}\cdot \lambda^{-1/4}\cdot \lambda^{+5/4}$
serve only as a \emph{mnemonic} for scales; the actual proof is carried out separately in the wide/narrow regions and is combined via the minimum over contributions, see \S\ref{subsec:narrow-count} and App.~\ref{app:dictionary}.
\end{remark}

\subsection{Temporal patching}\label{subsec:temporal-patching}

The goal of this subsection is to pass from local-in-time estimates (on windows of length $|I|=\lambda^{-1/2}$, see~\eqref{eq:I-lambda}) to the global norm $L^\infty_t\dot H^{-1}_x([0,T]\times\mathbb{R}^3)$ without worsening the frequency exponent.

\paragraph{Time covering.}
Let $\{I_{\lambda,j}\}_j$ be a covering of $[0,T]$ by intervals of length $\lambda^{-1/2}$ with overlap $O(1)$, as in~\eqref{eq:I-lambda}. For any nonnegative function $w\in C_c^\infty(\mathbb{R})$ with $\sum_j w_{\lambda,j}\equiv1$ on $[0,T]$ (where $w_{\lambda,j}(t):=w(\lambda^{1/2}(t-t_{\lambda,j}))$) write
\begin{equation}\label{eq:temporal-partition}
  R_\lambda(u)(t)
  \;=\;\sum_j R_{\lambda,j}(u)(t),\qquad
  R_{\lambda,j}(u)(t):=w_{\lambda,j}(t)\,R_\lambda(u)(t),
\end{equation}
so that each $R_{\lambda,j}$ is supported in $I_{\lambda,j}$ and $\sum_j R_{\lambda,j}=R_\lambda$.

\paragraph{Local-in-time estimates.}
In Sections \S\ref{sec:decoupling}–\ref{sec:narrow}, for each dyad $\lambda$ we established local (on $I_{\lambda,j}$) estimates of the wide and narrow contributions. Together they give, for any $\varepsilon>0$,
\begin{equation}\label{eq:local-window-bound}
  \|R_{\lambda,j}(u)\|_{\dot H^{-1}}
  \;\lesssim_\varepsilon\;
  \min\!\bigl\{\lambda^{-11/4+\varepsilon},\,\lambda^{-1}\bigr\}\,
  \|P_\lambda u\|{_{L^\infty_t\dot H^{1/2}_x}}\,
  \|P_\lambda u\|{_{L^\infty_t\dot H^{1}_x}},
\end{equation}
where the constant is independent of $j$ thanks to the uniformity of local constants (for the Strichartz block see \eqref{eq:global-L6} and \S\ref{subsec:wp-stri}). 

\paragraph{Patching over windows.}
Since the overlap of the family $\{I_{\lambda,j}\}_j$ is bounded by an absolute constant, from \eqref{eq:temporal-partition} and \eqref{eq:local-window-bound} it follows that
\begin{equation}\label{eq:glue-in-time}
  \|R_\lambda(u)\|_{L^\infty_t\dot H^{-1}_x([0,T]\times\mathbb{R}^3)}
  \;\lesssim_\varepsilon\;
  \min\!\bigl\{\lambda^{-11/4+\varepsilon},\,\lambda^{-1}\bigr\}\,
  \|P_\lambda u\|_{L^\infty_t\dot H^{1/2}_x}\,
  \|P_\lambda u\|_{L^\infty_t\dot H^{1}_x}.
\end{equation}
No additional powers of $\lambda$ appear here, since the patching uses only finite overlap and the time supremum.

\begin{remark}[Globalization of the Strichartz block]\label{rem:global-stri}
For the localized flow $S(t)P_{\lambda,\theta}$ from \S\ref{subsec:wp-stri} a more precise accounting gives
\begin{equation}\label{eq:patch-L6}
  \|S(\cdot)P_{\lambda,\theta}f\|_{L^6_{t,x}([0,T]\times\mathbb{R}^3)}
  \;\lesssim\; \lambda^{-1/2}\,(J_\lambda)^{1/6}\,\|P_{\lambda,\theta}f\|_{L^\infty_tL^2_x}
  \;\lesssim\; \lambda^{-5/12}\,T^{1/6}\,\|P_{\lambda,\theta}f\|_{L^\infty_tL^2_x},
\end{equation}
where $J_\lambda\sim T\lambda^{1/2}$ is the number of windows $I_{\lambda,j}$. The reduction of the exponent from $\lambda^{-1/2}$ to $\lambda^{-5/12}$ is a \emph{softening} in $\lambda$ that does not affect the outcome (see \eqref{eq:glue-exponent-fix}) and does not worsen \eqref{eq:glue-in-time}; in the main steps of \S\ref{sec:decoupling}–\S\ref{sec:narrow} the local form on $I_{\lambda,j}$ suffices.
\end{remark}

\paragraph{Summation over frequencies.}
Combining \eqref{eq:glue-in-time} with frequency orthogonality \eqref{eq:l2-sum-simple}, we obtain
\begin{equation}\label{eq:global-Rlambda}
  \|R(u)\|_{L^\infty_t\dot H^{-1}_x}
  \;=\;\Big(\sum_{\lambda}\|R_\lambda(u)\|_{L^\infty_t\dot H^{-1}_x}^2\Big)^{1/2}
  \;\lesssim_\varepsilon\;
  \Big(\sum_{\lambda}\min\{\lambda^{-11/2+2\varepsilon},\,\lambda^{-2}\}\Big)^{1/2}
  \|u\|_{X_{1/2}}\|u\|_{X_1}.
\end{equation}
The sum over $\lambda$ converges (faster than geometric), and simplifying we arrive at the global form
\begin{equation}\label{eq:final-temporal}
  \|R(u)\|_{L^\infty_t\dot H^{-1}_x}
  \;\lesssim\; \|u\|_{X_{1/2}}\;\|u\|_{X_1},
\end{equation}
which coincides with the statement of Theorem~\ref{thm:main}. 

\begin{remark}[On the role of the heat$\to$wave bridge]
The local-in-time estimates are obtained in the “wave’’ representation; the replacement of the heat multiplier by the phase $e^{it|\xi|}$ on a window of length $\lambda^{-1/2}$ leaves a remainder of size $O(\lambda^{-3/2})$ in $\dot H^{-1}$, which is strictly below the target exponent and does not affect \eqref{eq:glue-in-time} and \eqref{eq:final-temporal}. Details are given in the appendix to the paper.
\end{remark}

\newpage

\section{Minimal regularity and generalizations}\label{sec:min-reg}

\subsection*{Sufficiency of the condition \texorpdfstring{$s>\tfrac{5}{2}$}{s>5/2}}\label{subsec:min-suff}

In this subsection we fix the level of regularity of the initial data sufficient for the proper functioning of the entire proof of the log-free estimate. Questions of existence/uniqueness are not discussed; only standard facts of Fourier analysis and local theory are used, necessary for a careful setup of intermediate steps (§§\ref{sec:phase}–\ref{sec:narrow}). Below $u_0:\mathbb{R}^3\to\mathbb{R}^3$, $\div u_0=0$.

\begin{proposition}[Working regularity threshold]\label{prop:min-suff}
Suppose $s>\tfrac{5}{2}$ and 
\[
u\in C([0,T];\dot H^{s})\cap L^2([0,T];\dot H^{1})
\]
is a divergence-free field satisfying the standard local theory (see for example \cite{koch2001navier}). Then for all $t\in[0,T]$ one has
\begin{align}
\|u(t)\|_{\dot H^{1/2}}+\|u(t)\|_{\dot H^{1}} &\lesssim \|u(t)\|_{\dot H^{s}}, \label{eq:s-dominates}\\
\|fg\|_{\dot H^{1/2}} &\lesssim \|f\|_{\dot H^{1/2}}\|g\|_{L^\infty}+\|f\|_{L^\infty}\|g\|_{\dot H^{1/2}}, \label{eq:paraproduct-Half}\\
\|fg\|_{\dot H^{1}} &\lesssim \|f\|_{\dot H^{1}}\|g\|_{L^\infty}+\|f\|_{L^\infty}\|g\|_{\dot H^{1}}, \label{eq:paraproduct-One}
\end{align}
where the $L^\infty$ control is understood in the form consistent with App.~\ref{app:dictionary} (item E.25): either on $\mathbb{T}^3$, or in inhomogeneous $H^s$, or after the standard low-frequency cut $P_{\ge1}$ on $\mathbb{R}^3$ for $s>\tfrac{3}{2}$. Consequently, all decompositions used in §§\ref{sec:decoupling}–\ref{sec:glue} (Littlewood–Paley, angular caps, packet discretization) and product/commutator rules are valid without additional frequency reserve and without smallness assumptions. 
\end{proposition}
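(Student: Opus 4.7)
The plan is to establish the three displayed inequalities one at a time, since the final ``consequently'' clause is immediate once they are in hand.

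For the embedding \eqref{eq:s-dominates} I would use the Littlewood--Paley decomposition and split the frequency axis at $|\xi|=1$. On the high-frequency piece the pointwise inequality $N^{2\sigma}\le N^{2s}$ for $\sigma\in\{\tfrac12,1\}$ and $s>\tfrac52$ immediately controls $\sum_{N\ge 1}N^{2\sigma}\|P_N u\|_{L^2}^2$ by $\|u\|_{\dot H^s}^2$. Low frequencies require an independent handle: in the torus setting this is automatic (the spectrum is bounded below by $1$), while on $\mathbb{R}^3$ one invokes the $L^2$ control inherited from the energy identity for divergence-free flows together with the low-frequency convention of App.~\ref{app:dictionary}; standard complex interpolation between $L^2$ (or, equivalently, the inhomogeneous $H^s$) and $\dot H^s$ then fills in every intermediate exponent up to $s$, and in particular $\sigma\in\{\tfrac12,1\}$.

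For the bilinear bounds \eqref{eq:paraproduct-Half}--\eqref{eq:paraproduct-One} I would proceed via Bony's paraproduct decomposition
\[
fg=T_f g+T_g f+R(f,g),\qquad T_f g:=\sum_{N}P_{\le N/8}f\cdot P_N g,\qquad R(f,g):=\sum_{|k-\ell|\le 2}P_{2^k}f\cdot P_{2^\ell}g.
\]
Each paraproduct $P_{\le N/8}f\cdot P_N g$ is spectrally localized in $\{|\xi|\asymp N\}$, so by Plancherel, the almost-orthogonality \eqref{eq:orthodyadic}, and H\"older,
\[
\|T_f g\|_{\dot H^s}^2\asymp\sum_N N^{2s}\|P_{\le N/8}f\cdot P_N g\|_{L^2}^2\lesssim\|f\|_{L^\infty}^2\|g\|_{\dot H^s}^2,
\]
and symmetrically for $T_g f$. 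The resonance remainder $R(f,g)$ has dyadic pieces spectrally supported in $\{|\xi|\lesssim 2^k\}$; Bernstein at the low output frequency combined with the almost-diagonal constraint $|k-\ell|\le 2$ and the positivity of $s\in\{\tfrac12,1\}$ permits summation into the same right-hand side. This is the classical Kato--Ponce/Meyer line (cf.~\cite{KatoPonce1988,bahouri2011fourier}).

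The main obstacle, and the reason for the footnote invoking App.~\ref{app:dictionary} item E.25, is the interpretation of $\|f\|_{L^\infty}$ on the right-hand side: the homogeneous space $\dot H^s(\mathbb{R}^3)$ does not embed into $L^\infty$ for any $s$ without first fixing the polynomial ambiguity. The resolution is one of three equivalent conventions: work on $\mathbb{T}^3$ (where the mean is naturally quotiented), pass to the inhomogeneous scale $H^s$ (for which the Sobolev embedding $H^s\hookrightarrow L^\infty$ holds whenever $s>\tfrac32$), or precompose with the low-frequency cutoff $P_{\ge 1}$ on $\mathbb{R}^3$. Under any of these, $s>\tfrac52>\tfrac32$ yields $\|u\|_{L^\infty}\lesssim\|u\|_{\dot H^s}$, so \eqref{eq:s-dominates}--\eqref{eq:paraproduct-One} together supply every ingredient (dyadic and angular splittings of \S\ref{subsec:spaces-filters}, wave-packet discretization of \S\ref{sec:decoupling}, null-form handling of \S\ref{sec:narrow}, and the temporal patching of \S\ref{subsec:temporal-patching}) needed for the preceding sections to operate without any additional frequency reserve or smallness assumption.
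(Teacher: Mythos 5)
Your proposal is correct and follows essentially the same route as the paper, which likewise settles \eqref{eq:s-dominates} by Littlewood--Paley monotonicity of the index under the convention of App.~\ref{app:dictionary} (torus, inhomogeneous $H^s$, or the cut $P_{\ge1}$ on $\mathbb{R}^3$) and obtains \eqref{eq:paraproduct-Half}--\eqref{eq:paraproduct-One} from the Bony--Meyer paraproduct with Kato--Ponce-type product rules plus the $L^\infty$ embedding available for $s>\tfrac32$ in that same sense. The one caveat is your appeal to an ``energy identity'' and interpolation for the low frequencies on $\mathbb{R}^3$: the hypotheses do not provide pointwise-in-time $L^2_x$ control, but this step is superfluous anyway, since the stated low-frequency convention already removes the obstruction, exactly as in the paper's argument.
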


\begin{proof}[Notes]
Inequality \eqref{eq:s-dominates} is the standard “monotonicity in the index’’ for homogeneous norms in the spirit of Littlewood–Paley (on $\mathbb{T}^3$ literally, on $\mathbb{R}^3$ after removing low frequencies $P_{\ge1}$, see item E.25). Estimates \eqref{eq:paraproduct-Half}–\eqref{eq:paraproduct-One} are product rules in homogeneous scales (Besov/Sobolev versions), derived via the Bony–Meyer paraproduct and commutator estimates of Kato–Ponce type; see \cite[Chap.~2, §2.6]{bahouri2011fourier} and \cite[Chap.~5]{Grafakos2009}. The $L^\infty$ embedding is used in one of the equivalent forms of item E.25 of App.~\ref{app:dictionary}. This suffices to ensure that all algebraic manipulations with the nonlinearity $(u\cdot\nabla)u$ and the symmetrized resonant block are rigorously justified at the level of $\dot H^{-1}$ and rely only on $\|u(t)\|_{\dot H^{1/2}}$ and $\|u(t)\|_{\dot H^{1}}$ appearing on the right-hand side of the main estimate.
\end{proof}

\begin{remark}[On the role of the threshold]\label{rem:threshold-role}
The threshold $s>\tfrac{5}{2}$ is used as a technical guarantee of the finiteness of working norms and the applicability of product/commutator rules at the starting layer. The log-free estimate of the resonant block itself is \emph{formally} valid at any regularity that ensures the finiteness of $\dot H^{1/2}$ and $\dot H^{1}$; lowering to critical scales runs into the independent issue of existence of solutions at that level and is beyond the scope of this paper (see the overview in \S\ref{sec:scope}).
\end{remark}

\begin{remark}[Periodic case]\label{rem:torus}
Passing to $\mathbb{T}^3$ does not change the structure: the discreteness of the spectrum simplifies summation over high frequencies, and all local-frequency estimates from §§\ref{sec:decoupling}–\ref{sec:narrow} carry over verbatim (with normalization adjustments). Comparison of inhomogeneous/homogeneous norms and elements of local theory can be found, for example, in \cite{koch2001navier,bahouri2011fourier}.
\end{remark}

\medskip
Thus, within the adopted scheme it suffices to assume $u_0\in \dot H^{s}$ with $s>\tfrac{5}{2}$ (in the sense of item E.25 of App.~\ref{app:dictionary}). After this, all blocks of the proof — phase–geometric analysis, anisotropic Strichartz lemma, decoupling in the wide region, and the energy analysis of the narrow region — apply in the stated form, and the global patching requires no logarithmic reserve.

\newpage

\section{Merging estimates and frequency summation}\label{sec:glue}

\subsection{Orthogonality of frequency layers}\label{subsec:glue-orth}
Let $\{P_N\}_{N\in 2^{\mathbb Z}}$ be smooth dyadic projectors (see \S\ref{sec:prelim}). Then for $|\,\log_2(N/M)\,|\ge 3$
\begin{equation}\label{eq:glue-L2-orth}
  \bigl\langle P_N f,\; P_M g \bigr\rangle_{L^2(\mathbb{R}^3)}=0 .
\end{equation}
Define the symmetrized resonant block $R_N(u)$ as in \S\ref{sec:main}. Since the operator $|\nabla|^{-1}P_N$ has spectral support in the annulus $\{|\xi|\sim N\}$, \eqref{eq:glue-L2-orth} implies

\begin{lemma}\label{lem:glue-Hm1-orth}
If $|\,\log_2(N/M)\,|\ge 3$, then
\begin{equation}\label{eq:glue-Hm1-orth}
  \bigl\langle R_N(u),\; R_M(u) \bigr\rangle_{\dot H^{-1}(\mathbb{R}^3)}=0 .
\end{equation}
\end{lemma}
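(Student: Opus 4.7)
The plan is to reduce the claim to a statement about disjoint spectral supports via Plancherel. First I would rewrite the $\dot H^{-1}$ inner product in the form
\[
  \bigl\langle R_N(u),\,R_M(u)\bigr\rangle_{\dot H^{-1}}
  \;=\;\bigl\langle |\nabla|^{-1}R_N(u),\,|\nabla|^{-1}R_M(u)\bigr\rangle_{L^2(\mathbb{R}^3)},
\]
which is legitimate because the Fourier multiplier $|\nabla|^{-1}$ commutes with every Littlewood--Paley projector and the relevant quantities are finite under the working hypotheses of \S\ref{subsec:hyp}.

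Next I would verify that $\widehat{R_N(u)}$ is supported in the annulus $\{|\xi|\sim N\}$ and $\widehat{R_M(u)}$ in $\{|\xi|\sim M\}$. This is a direct consequence of the definition \eqref{eq:def-RN}, which places an outer projector $P_N$ in front of the whole bilinear expression (so its Fourier image is supported in $\operatorname{supp}\psi(\cdot/N)\subset\{|\xi|\sim N\}$), and similarly for the $M$-block. Since $|\nabla|^{-1}$ is a radial Fourier multiplier, it preserves these spectral localizations; hence $|\nabla|^{-1}R_N(u)$ and $|\nabla|^{-1}R_M(u)$ retain disjoint frequency supports whenever the annuli $\{|\xi|\sim N\}$ and $\{|\xi|\sim M\}$ do.

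The final step is to note that, with our Littlewood--Paley convention (the cutoff $\psi$ supported in $\{1/2\le|\xi|\le 2\}$), the condition $|\log_2(N/M)|\ge 3$ forces $\{|\xi|\sim N\}\cap\{|\xi|\sim M\}=\varnothing$, in the very same quantitative form used in \eqref{eq:orthodyadic} and \eqref{eq:LP-orth}. Applying Plancherel,
\[
  \bigl\langle |\nabla|^{-1}R_N(u),\,|\nabla|^{-1}R_M(u)\bigr\rangle_{L^2}
  \;=\;(2\pi)^{-3}\!\int_{\mathbb{R}^3}|\xi|^{-2}\,\widehat{R_N(u)}(\xi)\,\overline{\widehat{R_M(u)}(\xi)}\,d\xi \;=\;0,
\]
since the integrand vanishes identically by the disjointness of the supports. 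This yields \eqref{eq:glue-Hm1-orth}.

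There is no genuine analytic obstacle here: the lemma is a direct transcription of Lemma~\ref{lem:R-orth} into the $L^2$-via-$|\nabla|^{-1}$ form, and the only point requiring care is pedantic bookkeeping of the Littlewood--Paley cutoff widths so that the threshold $|\log_2(N/M)|\ge 3$ truly guarantees disjoint supports after all compositions (including the outer $P_N$ in \eqref{eq:def-RN}). Since the bilinear expression inside the brackets in \eqref{eq:def-RN} is a priori supported in a ball of radius $\sim N$ before the outer projection, no enlargement of supports beyond $\{|\xi|\sim N\}$ occurs, and the threshold $3$ is indeed the correct constant.
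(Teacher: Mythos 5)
Your proposal is correct and follows essentially the same route as the paper: reduce the $\dot H^{-1}$ pairing to an $L^2$ pairing via $|\nabla|^{-1}$, use that the outer projector $P_N$ in \eqref{eq:def-RN} confines the spectrum of $R_N(u)$ to the annulus $\{|\xi|\sim N\}$, and conclude by disjointness of supports when $|\log_2(N/M)|\ge 3$. No issues to flag.
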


\begin{corollary}[$\ell^2$ summation in $\dot H^{-1}$]\label{cor:glue-ell2}
For any coefficients $\{a_N\}\subset\mathbb{C}$ one has
\begin{equation}\label{eq:glue-ell2}
  \Bigl\|\sum\nolimits_N a_N\,R_N(u)\Bigr\|_{\dot H^{-1}}^2
  \;=\; \sum\nolimits_N |a_N|^2\,\|R_N(u)\|_{\dot H^{-1}}^2 .
\end{equation}
In particular, for $a_N\equiv1$ we obtain an orthogonal sum over $N$.
\end{corollary}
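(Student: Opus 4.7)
The plan is to expand the squared $\dot H^{-1}$ norm as a double sum over dyadic pairs and then annihilate every off–diagonal contribution using Lemma~\ref{lem:glue-Hm1-orth}. Since $\dot H^{-1}(\mathbb{R}^3)$ is a Hilbert space with inner product $\langle F,G\rangle_{\dot H^{-1}}=\langle |\nabla|^{-1}F,\,|\nabla|^{-1}G\rangle_{L^2}$, well-defined on $\dot H^{-1}\subset\mathcal{S}'/\mathcal{P}$, the starting identity is
\[
\Bigl\|\sum_N a_N R_N(u)\Bigr\|_{\dot H^{-1}}^{2}
\;=\;\sum_N\sum_M a_N\,\overline{a_M}\;\bigl\langle R_N(u),\,R_M(u)\bigr\rangle_{\dot H^{-1}},
\]
where rearrangement is legal by the ``finite or absolutely convergent sum'' clause combined with the Cauchy--Schwarz bound $|\langle R_N,R_M\rangle_{\dot H^{-1}}|\le \|R_N\|_{\dot H^{-1}}\|R_M\|_{\dot H^{-1}}$. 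This reduces the whole corollary to showing that every pair with $N\neq M$ contributes zero.

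The first substantive step is a direct appeal to Lemma~\ref{lem:glue-Hm1-orth}: for every pair $(N,M)$ with $|\log_2(N/M)|\ge 3$, the cross term $\langle R_N(u),R_M(u)\rangle_{\dot H^{-1}}$ vanishes, because $|\nabla|^{-1}R_N(u)$ and $|\nabla|^{-1}R_M(u)$ have disjoint spectral supports (the multiplier $|\nabla|^{-1}$ commutes with $P_N$ and preserves the supporting annulus). This kills all but at most four off-diagonal terms per index, namely those with $M/N\in\{2^{\pm 1},2^{\pm 2}\}$.

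The main obstacle, and the crux of the identity \eqref{eq:glue-ell2} as opposed to a mere equivalence $\asymp$, is the disposal of the residual spectrally-adjacent pairs with $1\le|\log_2(N/M)|\le 2$. Under the smooth cutoff convention of \eqref{eq:LP} the shells $[N/2,2N]$ and $[M/2,2M]$ overlap on a nondegenerate annulus for $M\in\{2N,4N\}$, so these cross terms are not automatically zero. To secure the strict Parseval identity I would fix the bookkeeping convention that the outer projector $P_N$ in the definition \eqref{eq:def-RN} has Fourier symbol supported on pairwise disjoint dyadic blocks---for example, the sharp half-open annuli $\{N\le|\xi|<2N\}$, or equivalently a thinning of the dyadic lattice with an aggregated re-indexing so that consecutive $R_N$'s have empty spectral intersection. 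Under this convention $\operatorname{supp}\widehat{R_N}\cap\operatorname{supp}\widehat{R_M}=\emptyset$ whenever $N\ne M$, so every off-diagonal term in the double sum vanishes identically on the Fourier side, and the expansion collapses to $\sum_N |a_N|^2\,\|R_N(u)\|_{\dot H^{-1}}^2$, which is \eqref{eq:glue-ell2}.

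The earlier analytic steps of \S\S\ref{sec:phase}--\ref{sec:narrow} depend only on almost-orthogonality and are untouched by this choice of convention, so the bookkeeping refinement is cost-free. The hard part, as noted, is not an analytic estimate but this choice: without pairwise disjoint supports one obtains only the norm equivalence, and to pass to strict equality one must commit to a decomposition whose blocks are spectrally separated for every pair $N\neq M$.
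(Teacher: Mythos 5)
Your proposal follows the same skeleton as the paper's argument: expand the squared $\dot H^{-1}$ norm into the double sum $\sum_{N,M} a_N\overline{a_M}\langle R_N(u),R_M(u)\rangle_{\dot H^{-1}}$ and annihilate cross terms by spectral-support orthogonality via Lemma~\ref{lem:glue-Hm1-orth}. The paper's own proof stops there, which is exactly the point where you go further, and rightly so: with the smooth cutoffs of \eqref{eq:LP} the shells for $M/N\in\{2^{\pm1},2^{\pm2}\}$ genuinely overlap, Lemma~\ref{lem:glue-Hm1-orth} says nothing about these pairs, and the literal Parseval identity \eqref{eq:glue-ell2} does not follow; only a two-sided comparability $\asymp$ with absolute constants (finite overlap plus Cauchy--Schwarz) is available under the stated conventions. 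Your repair --- replacing the outer projector by sharp, pairwise disjoint dyadic blocks (harmless on $L^2$-based spaces such as $\dot H^{-1}$), or equivalently re-indexing into spectrally separated groups --- is a legitimate way to obtain the exact equality, and your remark that the preceding sections only use almost-orthogonality, so the convention change is cost-free, is accurate. The alternative, which keeps the paper's smooth cutoffs, is to weaken \eqref{eq:glue-ell2} to $\asymp$; that is all that is actually needed in \eqref{eq:glue-final} and in \S\ref{subsec:temporal-patching}. In short: your mechanics match the paper's, and your proposal correctly identifies and fixes an overlooked adjacent-dyad issue that the paper's one-line proof glosses over.
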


\subsection{Final table of exponents}\label{subsec:glue-table}
Collect the scaling contributions obtained earlier:
\begin{itemize}
  \item \emph{Phase (wide region):} two angular IBPs in $(\rho_1,\rho_2)$ together with temporal localization $TT^\ast$ on windows $|I_\lambda|\sim\lambda^{-1/2}$ give in total $\lambda^{-3/2}$ (see \S\ref{sec:phase}, \S\ref{sec:wp-anis}).
  \item \emph{Two flows:} the local anisotropic Strichartz estimate for each flow in sum gives another $\lambda^{-1}$ (see \S\ref{sec:wp-anis}).
  \item \emph{Bilinear decoupling (rank~3):} a contribution $\lambda^{-\delta(6)}$, where $\delta(6)=\tfrac14$ in the benchmark of uniform curvature; in the minimal-curvature regime an admissible form is $\delta(6)=\tfrac16+o(1)$ (see \S\ref{sec:decoupling}).
\end{itemize}
Hence for the wide region
\[
  \lambda^{-3/2}\cdot\lambda^{-1}\cdot\lambda^{-\delta(6)}
  \;=\;
  \begin{cases}
    \lambda^{-11/4}, & \delta(6)=\tfrac14,\\[1mm]
    \lambda^{-8/3+o(1)}, & \delta(6)=\tfrac16+o(1).
  \end{cases}
\]
In the narrow region (see \S\ref{sec:narrow}) one has exactly
\[
  \|R^{\mathrm{nar}}_\lambda(u)\|_{\dot H^{-1}}
  \;\lesssim\; \lambda^{-1}\,\|u\|_{\dot H^{1/2}}\|u\|_{\dot H^{1}} .
\]
The outcome on \emph{one} dyad is determined by the slowest-decaying contribution, i.e. the narrow region:
\[
  \|R_\lambda(u)\|_{\dot H^{-1}}
  \;\lesssim\; \min\{\lambda^{-11/4},\,\lambda^{-1}\}\,\|u\|_{\dot H^{1/2}}\|u\|_{\dot H^{1}}
  \;\simeq\; \lambda^{-1}\,\|u\|_{\dot H^{1/2}}\|u\|_{\dot H^{1}} .
\]
Comment: the product of exponents in the summary “table’’ is a convenient mnemonic for the \emph{wide} region; the choice of minimum and the summation over $\lambda$ are performed \emph{after} separate estimates of the wide and narrow parts.

\subsection{Temporal patching}\label{subsec:glue-temporal}
All local estimates are obtained on windows of length $|I_{\lambda,j}|\simeq \lambda^{-1/2}$. Cover $[0,T]$ by a family $\{I_{\lambda,j}\}_{j=0}^{J_\lambda}$ with overlap $O(1)$, where
\[
  I_{\lambda,j}=[t_j,t_j+\lambda^{-1/2}],\qquad t_j=j\,\lambda^{-1/2},\qquad
  J_\lambda:=\lceil T\,\lambda^{1/2}\rceil+1.
\]
Then for any function $F$ and $1\le q\le\infty$ one has
\begin{equation}\label{eq:glue-cover}
  \|F\|_{L^q_t([0,T])}
  \;\le\; 2^{1/q}\Bigl(\sum_{j=0}^{J_\lambda}\|F\|_{L^q_t(I_{\lambda,j})}^q\Bigr)^{1/q}.
\end{equation}
Applying the local anisotropic Strichartz estimate to each window and summing over $j$, we obtain for each cap $\theta$
\begin{equation}\label{eq:glue-patch}
  \|S(\cdot)P_{\lambda,\theta}u\|_{L^6_{t,x}([0,T]\times\mathbb{R}^3)}
  \;\lesssim\; \lambda^{-1/2}\, (J_\lambda)^{1/6}\,\|P_{\lambda,\theta}u\|_{L^\infty_tL^2_x}.
\end{equation}
Since $J_\lambda\lesssim T\,\lambda^{1/2}+1$, from \eqref{eq:glue-patch} it follows that
\begin{equation}\label{eq:glue-exponent-fix}
  \|S(\cdot)P_{\lambda,\theta}u\|_{L^6_{t,x}([0,T])}
  \;\lesssim\; \underbrace{\lambda^{-1/2}\,\lambda^{1/12}}_{=\;\lambda^{-5/12}}\;T^{1/6}\,
  \|P_{\lambda,\theta}u\|_{L^\infty_tL^2_x},
\end{equation}
that is, the globalization does \emph{not worsen} the frequency exponent (instead of the erroneous $\lambda^{-1/3}$ one should correctly have $\lambda^{-5/12}$). An analogous patching over $\theta$ and over packets preserves the frequency balance, since the overlap is finite and almost orthogonality holds.

Finally, combining \eqref{eq:glue-ell2} with the wide- and narrow-region estimates from \S\ref{sec:decoupling}–\ref{sec:narrow} and summing over $\lambda$ (a geometric series), we obtain the global form:
\begin{equation}\label{eq:glue-final}
  \sup_{t\in[0,T]}\,\Bigl\|\sum\nolimits_{\lambda} R_\lambda(u)(t)\Bigr\|_{\dot H^{-1}}
  \;\lesssim\; \|u\|_{X_{1/2}}\;\|u\|_{X_{1}},
\end{equation}
which coincides with the assertion of the main theorem in \S\ref{sec:main}.

\newpage

\section{Conclusion}\label{sec:conclusion}

We have obtained a scale-invariant (log-free) a priori estimate for the resonant paraproduct high–high$\to$low in the $\dot H^{-1}$ norm, consistent with the dimension of the nonlinearity $(u\cdot\nabla)u$:
\begin{equation}\label{eq:concl-main}
  \|R_\lambda(u)\|_{\dot H^{-1}}
  \ \lesssim_\varepsilon\ \min\!\bigl\{\lambda^{-11/4+\varepsilon},\,\lambda^{-1}\bigr\}\,
  \|P_\lambda u\|_{\dot H^{1/2}}\ \|P_\lambda u\|_{\dot H^{1}},
\end{equation}
with subsequent $\ell^2$ summation over dyads and temporal patching without logarithmic loss, see \S\ref{subsec:glue-orth}–\ref{subsec:temporal-patching}. The equivalent global form is written in \eqref{eq:final-temporal}.

\paragraph{Methodological pillars.}
\begin{itemize}[leftmargin=1.7em, itemsep=0.2em]
\item Phase–geometric integration by parts (two angular IBPs) in the wide region and temporal localization via $TT^\ast$ on windows $|I|=\lambda^{-1/2}$, which together yield a factor $\lambda^{-3/2}$; see \S\ref{sec:phase} and \S\ref{subsec:wp-stri}.
\item Anisotropic Strichartz estimate for cap localization (App.~\ref{app:astrichartz}), providing a second independent flow factor $\lambda^{-1/2}$ on each multiplier.
\item Tiled $\ell^2$ structure and bilinear decoupling on the rank~3 level surface with exponent $\delta(6)\in\{\tfrac14,\tfrac16+o(1)\}$; see \S\ref{sec:decoupling} and App.~\ref{app:decoupling}.
\item Energy analysis of the narrow region $|\xi+\eta|\le \lambda^{-\delta}$ with null–form suppression and passage to $\dot H^{-1}$, giving exactly $\lambda^{-1}$ on a dyad; see \S\ref{sec:narrow}.
\end{itemize}

\paragraph{Scale consistency.}
The outcome on one dyad is determined by the minimum between the “wide’’ and “narrow’’ exponents; in our geometry the narrow region dominates with exponent $\lambda^{-1}$, which excludes logarithmic defect and is consistent with the scaling symmetry \S\ref{sec:scaling}. Temporal patching (\S\ref{subsec:temporal-patching}) and frequency orthogonality (\S\ref{subsec:glue-orth}) do not alter the frequency exponents.

\paragraph{Domain of applicability and limitations.}
The proof relies only on the divergence-free condition and the finiteness of the working norms $\dot H^{1/2}$ and $\dot H^{1}$; issues of existence/uniqueness remain outside the scope of this paper (see \S\ref{sec:min-reg} and the references \cite{koch2001navier,bahouri2011fourier,Grafakos2009,KatoPonce1988}).

\paragraph{Directions for continuation.}
This first paper of the series treats the resonant high–high$\to$low block. In subsequent works it is planned to address:
\begin{enumerate}[leftmargin=1.7em, itemsep=0.2em]
\item treatment of nonsymmetric blocks (low–high$\to$high, high–low$\to$high) and the diagonal high–high$\to$high regime (see the overview in \S\ref{sec:min-reg});
\item refinement of the decoupling step under weak minimal curvature and its influence on intermediate estimates (\S\ref{subsec:delta-fix}, App.~\ref{app:decoupling});
\item systematization of the “heat$\to$wave’’ bridge on short windows with strict $TT^\ast$ forms (App.~\ref{app:heat});
\item extension to the periodic case and anisotropic modifications of viscosity.
\end{enumerate}

\medskip
We have attempted to arrange the exposition so that the lines of the proof read consecutively and without hidden dependencies: all normalizations and the dictionary are collected in App.~\ref{app:dictionary}. We hope that this modular scheme will also be useful for subsequent parts of the series.

\appendix
\newpage

\section{Anisotropic Strichartz lemma}\label{app:astrichartz}

\subsection{Setup}\label{subapp:A-setup}
In this appendix we fix the anisotropic Strichartz estimate for the localized flow on short time windows. We work in $\mathbb{R}^3$ with the Fourier normalization from \S\ref{sec:prelim} and use the angular decomposition consistent with §\ref{subsec:wp-construction}.

\paragraph{Angular caps and projectors.}
For a dyad $\lambda\gg1$ and a direction $\theta\in\mathbb{S}^2$ denote
\[
  \Theta_{\lambda,\theta}:=\{\xi\in\mathbb{R}^3:\ |\xi|\sim\lambda,\ \angle(\xi,\theta)\le \lambda^{-1/2}\},
  \qquad P_{\lambda,\theta}:=\mathcal{F}^{-1}\!\big(\mathbf{1}_{\Theta_{\lambda,\theta}}\widehat{\cdot}\big),
\]
and also $P_\lambda:=\sum_{\theta}P_{\lambda,\theta}$ — the total projector onto the layer $|\xi|\sim\lambda$. Almost-orthogonality in~$\theta$ was discussed in §\ref{subsec:wp-construction}.

\paragraph{Local-in-time windows.}
For $t_0\in\mathbb{R}$ set
\[
  I_{\lambda}(t_0):=\big[t_0,\ t_0+\lambda^{-1/2}\big],
\]
that is, the time scale is consistent with the length of the wave cylinder of a packet (\S\ref{subsec:wp-construction}).

\paragraph{Two evolutionary flows.}
We will use two interchangeable forms of the evolution:
\begin{align*}
  &\text{wave flow: } &&S(t)f:=\mathcal{F}^{-1}\!\big(e^{it|\xi|}\widehat f(\xi)\big),\\
  &\text{heat flow: } &&e^{t\Delta}f:=\mathcal{F}^{-1}\!\big(e^{-t|\xi|^{2}}\widehat f(\xi)\big).
\end{align*}
In \S\ref{sec:wp-anis}–\ref{sec:decoupling} the wave form was used; the strict substitution $e^{(t-t_0)\Delta}\leftrightarrow S(t-t_0)$ on windows $I_\lambda(t_0)$ is justified by the \emph{heat$\to$wave} bridge estimate from App.~\ref{app:heat} (remainder $O(\lambda^{-3/2})$ in $\dot H^{-1}$).

\paragraph{Target problem.}
For fixed $\lambda\gg1$, $\theta\in\mathbb{S}^2$ and $t_0\in\mathbb{R}$ we are interested in the operator
\[
  T_{\lambda,\theta}^{(t_0)}:\ L^2_x\longrightarrow L^6_{t,x}\big(I_\lambda(t_0)\times\mathbb{R}^3\big),\qquad
  T_{\lambda,\theta}^{(t_0)}f
  \in\Big\{\,S(\cdot-t_0)P_{\lambda,\theta}f,\ \ e^{(\cdot-t_0)\Delta}P_{\lambda,\theta}f\,\Big\}.
\]
In the next subsection (\S\ref{subapp:A-est}) we will establish the anisotropic estimate
\[
  \|T_{\lambda,\theta}^{(t_0)}f\|_{L^6_{t,x}(I_\lambda(t_0)\times\mathbb{R}^3)}
  \ \lesssim_{\varepsilon}\ \lambda^{-1/2+\varepsilon}\,\|P_{\lambda,\theta}f\|_{L^2_x},
\]
uniform in $\theta$ and $t_0$, and in \S\ref{subapp:A-proof} a proof sketch via $TT^\ast$ with a kernel having anisotropic decay will be given. Final remarks and globalization in time are collected in \S\ref{subapp:A-remarks}.

\subsection{Anisotropic estimate}\label{subapp:A-est}

Let $I_\lambda(t_0)=[t_0,\,t_0+\lambda^{-1/2}]$, $\lambda\gg1$, and $P_{\lambda,\theta}$ the projector onto the cap $\Theta_{\lambda,\theta}$ from \S\ref{subapp:A-setup}. Denote the wave and heat flows
\[
S(t)f=\mathcal{F}^{-1}\!\big(e^{it|\xi|}\widehat f(\xi)\big), \qquad
e^{t\Delta}f=\mathcal{F}^{-1}\!\big(e^{-t|\xi|^2}\widehat f(\xi)\big).
\]

\begin{lemma}[Local anisotropic Strichartz estimate]\label{lem:A-L6}
For any $\varepsilon>0$, all $\lambda\gg1$, $\theta\in S^{2}$ and $t_0\in\mathbb{R}$ there exist constants $C_\varepsilon$ (independent of $\lambda,\theta,t_0$) such that
\begin{align}
\bigl\|S(\cdot-t_0)P_{\lambda,\theta}f\bigr\|_{L^6_{t,x}\!\left(I_\lambda(t_0)\times\mathbb{R}^3\right)}
&\le C_\varepsilon\,\lambda^{-1/2+\varepsilon}\,\|P_{\lambda,\theta}f\|_{L^2_x},\label{eq:A-L6-wave}\\
\bigl\|e^{(\cdot-t_0)\Delta}P_{\lambda,\theta}f\bigr\|_{L^6_{t,x}\!\left(I_\lambda(t_0)\times\mathbb{R}^3\right)}
&\le C_\varepsilon\,\lambda^{-1/2+\varepsilon}\,\|P_{\lambda,\theta}f\|_{L^2_x}.\label{eq:A-L6-heat}
\end{align}
\end{lemma}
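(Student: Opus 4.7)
The plan is to prove both \eqref{eq:A-L6-wave} and \eqref{eq:A-L6-heat} by a $TT^{\ast}$ argument on the window $I=I_{\lambda}(t_{0})$, formalizing the sketch begun in the proof of Lemma~\ref{lem:anisotropic-strichartz}. Write $T_{\lambda,\theta}g(t,x):=\int e^{i(x\cdot\xi+(t-t_{0})|\xi|)}\sigma_{\lambda,\theta}(\xi)\,g(\xi)\,d\xi$, so that $S(\cdot-t_{0})P_{\lambda,\theta}f=T_{\lambda,\theta}\widehat{f}$ once we absorb the harmless smooth cutoff $\sigma_{\lambda,\theta}\equiv 1$ on $\Theta_{\lambda,\theta}$. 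The identity $\|T_{\lambda,\theta}\|_{L^{2}_{\xi}\to L^{6}_{t,x}(I\times\mathbb{R}^{3})}^{2}=\|T_{\lambda,\theta}T_{\lambda,\theta}^{\ast}\|_{L^{6/5}_{t,x}(I)\to L^{6}_{t,x}(I)}$ then reduces everything to a bound for the convolution kernel
\[
K_{\lambda}(\tau,z)=\int e^{i(z\cdot\xi+\tau|\xi|)}\,\sigma_{\lambda,\theta}(\xi)^{2}\,d\xi
\]
in the regime $|\tau|\le\lambda^{-1/2}$. For the heat form \eqref{eq:A-L6-heat}, the cleanest route is to invoke the heat$\to$wave bridge of App.~\ref{app:heat}: on a window of length $\lambda^{-1/2}$ the two flows differ by a remainder of order $\lambda^{-3/2}$ in $\dot H^{-1}$, well below target, so it suffices to prove the wave version and transfer. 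Alternatively one redoes the kernel computation for $\widetilde{K}_{\lambda}(\tau,z):=\int e^{iz\cdot\xi-\tau|\xi|^{2}}\sigma_{\lambda,\theta}^{2}\,d\xi$, which inherits the same anisotropic bound and is even improved by the Gaussian factor $e^{-\tau|\xi|^{2}}$ at the physically relevant scale $\tau\sim\lambda^{-2}$.

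The central step is the anisotropic kernel estimate \eqref{eq:kernel-decay}, a stationary phase / IBP calculation in cap-adapted coordinates $\xi=\lambda\theta+\eta_{\parallel}\theta+\eta_{\perp}$. Using the expansion $|\xi|=\lambda+\eta_{\parallel}+\frac{|\eta_{\perp}|^{2}}{2\lambda}+O(\lambda^{-1/2})$ provided by the block-Hessian analysis of \S\ref{subsec:phase-det}, the longitudinal derivative of the total phase equals $\theta\cdot z+\tau$ up to lower order, while the transverse gradient equals $z_{\perp}+\tau\eta_{\perp}/\lambda$. One IBP in $\eta_{\parallel}$ at scale $\lambda$ produces the factor $(1+\lambda|\theta\cdot z-\tau|)^{-M}$, and two IBPs (or stationary phase when the critical point falls inside the cap) in $\eta_{\perp}$ at scale $\lambda^{1/2}$ produce $(1+\lambda^{1/2}|z_{\perp}|)^{-M}$, giving the claimed anisotropic decay uniformly in $\theta$ and $t_{0}$.

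The last step converts the kernel bound into an operator bound. Young's inequality with the exponent relation $1+1/6=1/3+5/6$ gives
\[
\|T_{\lambda,\theta}T_{\lambda,\theta}^{\ast}\|_{L^{6/5}\to L^{6}}\;\lesssim\;\|K_{\lambda}\|_{L^{3}_{t,x}((I-I)\times\mathbb{R}^{3})},
\]
and integrating the anisotropic bound over $\tau\in[-\lambda^{-1/2},\lambda^{-1/2}]$ and $z\in\mathbb{R}^{3}$ yields $\|K_{\lambda}\|_{L^{3}}\lesssim_{\varepsilon}\lambda^{-1+\varepsilon}$, where the $\varepsilon$ absorbs the logarithmic bookkeeping at the transition between the stationary-phase and IBP regimes. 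Estimate \eqref{eq:A-L6-wave} then follows after a square root, and \eqref{eq:A-L6-heat} by the bridge (or by the same kernel argument applied to $\widetilde{K}_\lambda$). The main obstacle is precisely this transition: since the constraint $|\tau|\lesssim\lambda^{-1/2}$ forces the effective transverse Hessian of the phase to be of order $\tau/\lambda\lesssim\lambda^{-3/2}$, the transverse critical point can sit near the boundary of the cap, and one must carefully bridge the in-cap stationary-phase regime $|z_{\perp}|\lesssim\lambda^{-1/2}$ and the off-cap IBP regime $|z_{\perp}|\gg\lambda^{-1/2}$; uniformity in $\theta$ and $t_{0}$ is then automatic from the rotational and translational invariance of the construction.
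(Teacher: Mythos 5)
Your architecture (the $TT^\ast$ reduction, the kernel $K_\lambda$, the anisotropic stationary--phase bound in cap--adapted coordinates) is the same as the paper's in App.~\ref{app:astrichartz}; the divergence is in how the kernel bound is converted into an operator bound, and that closing step does not work. Granting the pointwise bound \eqref{eq:A-kernel}, the $L^3$ norm you feed into Young's inequality is computed directly: the $z_\parallel$--integral contributes $\lambda^{-1}$, the $z_\perp$--integral contributes $\lambda^{-1}$, and the $\tau$--integral over $|\tau|\le\lambda^{-1/2}$ contributes $\lambda^{-1/2}$, so $\|K_\lambda\|_{L^3((I-I)\times\mathbb{R}^3)}\lesssim\lambda^{3/2}\bigl(\lambda^{-1}\cdot\lambda^{-1}\cdot\lambda^{-1/2}\bigr)^{1/3}=\lambda^{2/3}$ --- a positive power of $\lambda$, not the $\lambda^{-1+\varepsilon}$ you assert. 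The mismatch is in fact larger, because the prefactor $\lambda^{3/2}$ undercounts the kernel: $K_\lambda(0,0)=\int\sigma_{\lambda,\theta}^2\simeq|\Theta_{\lambda,\theta}|\simeq\lambda\cdot\lambda^{1/2}\cdot\lambda^{1/2}=\lambda^{2}$, and $|K_\lambda|$ has this size on the dual box $\{|\theta\cdot z-\tau|\lesssim\lambda^{-1},\,|z_\perp|\lesssim\lambda^{-1/2}\}$, so honestly $\|K_\lambda\|_{L^3}\simeq\lambda^{7/6}$ and Young only yields $\|T_{\lambda,\theta}\|_{L^2\to L^6(I)}\lesssim\lambda^{7/12}$.

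This is not a repairable bookkeeping slip in your write-up: testing with $\widehat f=\mathbf{1}_{\Theta_{\lambda,\theta}}$, on the window $|t-t_0|\le\lambda^{-1/2}$ the curvature part of the phase is $\lesssim |t|\,|\xi_\perp|^2/|\xi|\lesssim\lambda^{-1/2}=o(1)$, so for every such $t$ the modulus of $S(t-t_0)P_{\lambda,\theta}f$ is $\simeq\lambda^{2}$ on a translated dual box of volume $\simeq\lambda^{-2}$; hence $\|S(\cdot-t_0)P_{\lambda,\theta}f\|_{L^6(I_\lambda(t_0)\times\mathbb{R}^3)}\gtrsim\lambda^{2}\bigl(\lambda^{-2}\lambda^{-1/2}\bigr)^{1/6}=\lambda^{19/12}\simeq\lambda^{7/12}\|P_{\lambda,\theta}f\|_{L^2}$, exceeding the claimed right-hand side by $\lambda^{13/12}$ (even with the thinner reading $|\xi_\parallel|\lesssim1$ used in the appendix coordinates the same test still gives a positive power, $\lambda^{1/4}$). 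So no rearrangement of the kernel step --- your Young route, or the Schur-type display \eqref{eq:A-Schur} it imitates --- can reach $\lambda^{-1/2+\varepsilon}$ with this normalization; any correct closing step for the cap-localized flow on windows of length $\lambda^{-1/2}$ lands at $\lambda^{7/12}$. Two smaller points: the $TT^\ast$ duality gives $\|T\|^2=\|TT^\ast\|_{L^{6/5}\to L^6}$, so the square root you take at the end must be applied to a bound of the form $\lambda^{-1+\varepsilon}$, which is exactly what is unavailable; and the heat case cannot be obtained ``by the bridge'' of App.~\ref{app:heat}, since that lemma controls a Duhamel difference in $L^2_t\dot H^{-1}_x$ and does not transfer $L^6_{t,x}$ bounds between the two propagators --- your fallback of redoing the kernel computation for $\widetilde K_\lambda$ is the right move in principle but meets the same quantitative obstruction.
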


\begin{proof}[Sketch of proof]
Consider the wave case (the heat case is handled similarly by parabolic rescaling). Define for fixed $\lambda,\theta,t_0$ the operator
\[
(T_{\lambda,\theta}^{(t_0)}g)(t,x):=\int_{\mathbb{R}^3} e^{i\{x\cdot\xi+(t-t_0)|\xi|\}}\,
\sigma_{\lambda,\theta}(\xi)\,g(\xi)\,d\xi,
\]
where $\sigma_{\lambda,\theta}\in C_c^\infty$ is supported in $\Theta_{\lambda,\theta}$ and $S(\cdot-t_0)P_{\lambda,\theta}f
=T_{\lambda,\theta}^{(t_0)}\widehat f$. It suffices to show
\[
\|T_{\lambda,\theta}^{(t_0)}\|_{L^2_\xi\to L^6_{t,x}(I_\lambda(t_0)\times\mathbb{R}^3)}
\le C_\varepsilon\,\lambda^{-1/2+\varepsilon}.
\]
By $TT^\ast$ we obtain a convolution with the kernel
\[
K_\lambda(\tau,z):=\int_{\mathbb{R}^3}
e^{i(z\cdot\xi+\tau|\xi|)}\,\sigma_{\lambda,\theta}(\xi)^2\,d\xi,\qquad \tau=t-s,\ z=x-y.
\]
Let $z_\parallel:=(\theta\!\cdot\! z)\theta$ and $z_\perp:=z-z_\parallel$. In coordinates $\xi=\lambda\theta+\xi_\parallel\theta+\xi_\perp$ with $|\xi_\perp|\lesssim \lambda^{1/2}$ and $|\xi_\parallel|\lesssim1$, stationary phase yields for any $N\in\mathbb{N}$:
\begin{equation}\label{eq:A-kernel}
|K_\lambda(\tau,z)|\ \le\ C_N\,\lambda^{3/2}\,
\bigl(1+\lambda\,|\theta\!\cdot\! z-\tau|\bigr)^{-N}\,
\bigl(1+\lambda^{1/2}|z_\perp|\bigr)^{-N}.
\end{equation}
Integrating \eqref{eq:A-kernel} in $z$ and $\tau$ and using $|I_\lambda(t_0)|=\lambda^{-1/2}$ we obtain Schur-type bounds
\begin{equation}\label{eq:A-Schur}
\sup_{t\in I_\lambda(t_0)}\int_{I_\lambda(t_0)}\!\!\int_{\mathbb{R}^3}\!|K_\lambda(t-s,x-y)|\,dy\,ds
+\sup_{s\in I_\lambda(t_0)}\int_{I_\lambda(t_0)}\!\!\int_{\mathbb{R}^3}\!|K_\lambda(t-s,x-y)|\,dx\,dt
\le C_\varepsilon\,\lambda^{-1+\varepsilon}.
\end{equation}
Hence by Schur’s test $\|TT^\ast\|_{L^{6/5}\to L^6}\le C_\varepsilon\,\lambda^{-1+\varepsilon}$, and so
$\|T_{\lambda,\theta}^{(t_0)}\|_{L^2\to L^6}\le C_\varepsilon\,\lambda^{-1/2+\varepsilon}$, which yields \eqref{eq:A-L6-wave}. For \eqref{eq:A-L6-heat} the same step is repeated with the phase $e^{-(t-s)|\xi|^2}$.
\end{proof}

\paragraph{Remark.}
Covering $[0,T]$ by windows of length $\lambda^{-1/2}$ with overlap $O(1)$, from \eqref{eq:A-L6-wave}–\eqref{eq:A-L6-heat} it follows that
\[
\|S(\cdot)P_{\lambda,\theta}f\|_{L^6_{t,x}([0,T]\times\mathbb{R}^3)}
\le C_\varepsilon\,\lambda^{-1/2+\varepsilon}\,\bigl(T\lambda^{1/2}\bigr)^{1/6}\,\|P_{\lambda,\theta}f\|_{L^2_x},
\]
and similarly for $e^{t\Delta}$; this is consistent with \S\ref{subsec:temporal-patching}.

\subsection{Proof sketch}\label{subapp:A-proof}

For completeness we record the steps of deriving \eqref{eq:A-L6-wave}–\eqref{eq:A-L6-heat} of Lemma~\ref{lem:A-L6} via the $TT^\ast$ argument and anisotropic stationary phase.

\paragraph{Step 1: reduction to an operator with a cap cutoff.}
Let $\sigma_{\lambda,\theta}\in C_c^\infty(\mathbb{R}^3)$ be a smooth mask equal to $1$ on $\Theta_{\lambda,\theta}$ and supported in its $O(\lambda^{-1/2})$ thickening. Set
\[
  (T_{\lambda,\theta}^{(t_0)}g)(t,x)
  :=\int_{\mathbb{R}^3} e^{i\{x\cdot\xi+(t-t_0)|\xi|\}}\,
  \sigma_{\lambda,\theta}(\xi)\,g(\xi)\,d\xi.
\]
Then $S(\cdot-t_0)P_{\lambda,\theta}f = T_{\lambda,\theta}^{(t_0)}\widehat f$, and it suffices to establish
\[
  \|T_{\lambda,\theta}^{(t_0)}\|_{L^2_\xi\to L^6_{t,x}(I_\lambda(t_0)\times\mathbb{R}^3)}
  \ \lesssim_\varepsilon\ \lambda^{-1/2+\varepsilon}.
\]

\paragraph{Step 2: the $TT^\ast$ operator and the kernel.}
The adjoint operator has the form
\[
  (T_{\lambda,\theta}^{(t_0)})^\ast F(\xi)
  =\int_{I_\lambda(t_0)\times\mathbb{R}^3}
   e^{-i\{y\cdot\xi+(s-t_0)|\xi|\}}\,
   \sigma_{\lambda,\theta}(\xi)\,F(s,y)\,dy\,ds.
\]
Hence
\[
  (T_{\lambda,\theta}^{(t_0)}(T_{\lambda,\theta}^{(t_0)})^\ast F)(t,x)
  =\int_{I_\lambda(t_0)\times\mathbb{R}^3}
    K_\lambda(t-s,x-y)\,F(s,y)\,dy\,ds,
\]
where the kernel is
\[
  K_\lambda(\tau,z)
  :=\int_{\mathbb{R}^3} e^{i(z\cdot\xi+\tau|\xi|)}\,\sigma_{\lambda,\theta}(\xi)^2\,d\xi,
  \qquad \tau:=t-s,\ z:=x-y.
\]

\paragraph{Step 3: anisotropic stationary phase.}
Pass to coordinates
\[
  \xi=\lambda\theta+\xi_\parallel\theta+\xi_\perp,\qquad
  |\xi_\perp|\lesssim \lambda^{1/2},\ \ |\xi_\parallel|\lesssim 1,
\]
in which $\sigma_{\lambda,\theta}$ localizes $\xi$ to the described “thin’’ paraboloidal block. A standard integration by parts in $(\xi_\parallel,\xi_\perp)$ (or stationary phase) yields for any $N\in\mathbb{N}$ the decay
\begin{equation}\label{eq:A-kernel-reprised}
  |K_\lambda(\tau,z)|
  \ \lesssim\ \lambda^{3/2}\,
  \bigl(1+\lambda\,|\theta\!\cdot\! z-\tau|\bigr)^{-N}\,
  \bigl(1+\lambda^{1/2}|z_\perp|\bigr)^{-N},
\end{equation}
where $z_\perp:=z-(\theta\!\cdot\! z)\,\theta$. This matches \eqref{eq:A-kernel}.

\paragraph{Step 4: Schur test on the window $I_\lambda(t_0)$.}
From \eqref{eq:A-kernel-reprised} (after integration in $z$ and using $|I_\lambda(t_0)|=\lambda^{-1/2}$) we obtain the Schur pair of bounds
\begin{equation}\label{eq:A-Schur-reprised}
  \sup_{t\in I_\lambda(t_0)}\iint_{I_\lambda(t_0)\times\mathbb{R}^3}\!|K_\lambda(t-s,x-y)|\,dy\,ds
  \ +\
  \sup_{s\in I_\lambda(t_0)}\iint_{I_\lambda(t_0)\times\mathbb{R}^3}\!|K_\lambda(t-s,x-y)|\,dx\,dt
  \ \lesssim\ \lambda^{-1+\varepsilon},
\end{equation}
i.e. $\|T_{\lambda,\theta}^{(t_0)}(T_{\lambda,\theta}^{(t_0)})^\ast\|_{L^{6/5}\to L^{6}}\lesssim \lambda^{-1+\varepsilon}$.

\paragraph{Step 5: completion.}
From \eqref{eq:A-Schur-reprised} we deduce
\[
  \|T_{\lambda,\theta}^{(t_0)}\|_{L^2\to L^6(I_\lambda(t_0)\times\mathbb{R}^3)}
  \ \lesssim_\varepsilon\ \lambda^{-1/2+\varepsilon},
\]
which gives \eqref{eq:A-L6-wave}. For the heat flow \eqref{eq:A-L6-heat} the derivation is identical: either repeat Steps 1–4 for the phase $e^{-(t-s)|\xi|^2}$ with the same anisotropic decay, or perform parabolic rescaling and appeal to a local endpoint estimate; see also the bridge estimate in App.~\ref{app:heat}.

\subsection{Remarks}\label{subapp:A-remarks}

\paragraph{Optimality and anisotropy.}
The exponent $\lambda^{-1/2}$ in \eqref{eq:A-L6-wave}–\eqref{eq:A-L6-heat} is endpoint for $L^6_{t,x}$ on windows $|I_\lambda|\sim\lambda^{-1/2}$ under angular localization of radius $\lambda^{-1/2}$. Without anisotropic (cap) localization, improvement over isotropic estimates is not expected; see the packet geometry discussion in \S\ref{subapp:A-setup} and \S\ref{sec:wp-anis}.

\paragraph{Independence of constants.}
The constants in \eqref{eq:A-L6-wave}–\eqref{eq:A-L6-heat} do not depend on $\theta\in\mathbb{S}^2$ and $t_0\in\mathbb{R}$ thanks to rotational invariance of the phases and the normalization of caps with fixed transverse radius $\lambda^{-1/2}$; see also almost-orthogonality in $\theta$ from \S\ref{subsec:wp-construction}.

\paragraph{Compatibility with packet decomposition.}
The decomposition $P_{\lambda,\theta}=\sum_a \langle\cdot,\varphi_{\lambda,\theta,a}\rangle \varphi_{\lambda,\theta,a}$ (\S\ref{subsec:wp-construction}) is stable under application of the flow $S(t)$ on the window $I_\lambda(t_0)$: each packet remains concentrated in a cylinder of radius $\lambda^{-1/2}$ and length $\lambda^{-1}$, consistent with the $TT^\ast$ kernel in the proof of \S\ref{subapp:A-proof}. Therefore, summation over centers $a$ is realized in $\ell^2$ without loss of exponent.

\paragraph{Globalization in time.}
Covering $[0,T]$ by windows of length $\lambda^{-1/2}$ with overlap $O(1)$ yields (by \S\ref{subsec:temporal-patching})
\[
\|S(\cdot)P_{\lambda,\theta}f\|_{L^6_{t,x}([0,T]\times\mathbb{R}^3)}
\ \lesssim\ \lambda^{-1/2+\varepsilon}\,(T\lambda^{1/2})^{1/6}\,\|P_{\lambda,\theta}f\|_{L^2_x},
\]
see also \eqref{eq:patch-L6}. The factor $(T\lambda^{1/2})^{1/6}$ does not worsen the frequency balance of subsequent estimates that use $L^\infty_t L^2_x$ for each flow.

\paragraph{Heat$\to$wave connection.}
Replacing the heat multiplier by the wave phase on windows $|I_\lambda|\sim\lambda^{-1/2}$ leaves a remainder of size $O(\lambda^{-3/2})$ in $L^2_t\dot H^{-1}_x$; this is strictly below the target exponent and compatible with the subsequent phase techniques (\S\ref{sec:decoupling}, \S\ref{sec:phase}). See App.~\ref{app:heat} for details.

\paragraph{Periodic case.}
On $\mathbb{T}^3$ all conclusions remain valid: the integrals in $\xi$ are replaced by finite sums, and the constants in \eqref{eq:A-L6-wave}–\eqref{eq:A-L6-heat} do not depend on the size of the torus after standard renormalization; see also \S\ref{sec:min-reg}.

\newpage

\section{Full $TT^\ast$ derivation of decoupling}\label{app:decoupling}

\subsection{Setup and tiled reduction}\label{subapp:B-tiles}

We work at a fixed dyad $\lambda\gg1$ in one (or a pair of) angular caps $\Theta_{\lambda,\theta}$, $\Theta_{\lambda,\theta'}$ of radius $\lambda^{-1/2}$, as in \S\ref{sec:decoupling}. Let $\sigma_{\lambda,\theta}\in C_c^\infty(\mathbb{R}^3)$ denote a smooth mask of the cap $\Theta_{\lambda,\theta}$ and set
\begin{equation}\label{eq:B-operator}
  (T_{\lambda,\theta}f)(t,x):=\int_{\mathbb{R}^3} e^{i(x\cdot\xi+t|\xi|)}\,\sigma_{\lambda,\theta}(\xi)\,\widehat f(\xi)\,d\xi,
  \qquad
  \big(E_\lambda(f,g)\big)(t,x):=\big(T_{\lambda,\theta}f\big)(t,x)\,\big(T_{\lambda,\theta'}g\big)(t,x).
\end{equation}
Below $I_\lambda(t_0):=[t_0,t_0+\lambda^{-1/2}]$ denotes the short time window, consistent with \S\ref{subapp:A-setup}.

\paragraph{Frequency tiling.}
Partition each cap into “tiles’’ (quasi-parallelepipeds) of sizes $\lambda^{-1/2}\times\lambda^{-1/2}\times\lambda^{-1}$:
\begin{equation}\label{eq:B-tiles}
  T=\Theta_{\lambda,\theta}\cap\big(\xi_0+Q\big),\qquad
  Q:=[0,\lambda^{-1/2}]^2\times[0,\lambda^{-1}],\qquad \xi_0\in \lambda^{-1/2}\mathbb{Z}^2\times \lambda^{-1}\mathbb{Z}.
\end{equation}
Let $\widehat{f_T}:=\widehat f\cdot\mathbf{1}_T$ and $f=\sum_{T}f_T$; then
\begin{equation}\label{eq:B-l2-orth}
  \sum_T \|f_T\|_{L^2_x}^2=\|f\|_{L^2_x}^2,\qquad
  \operatorname{supp}\widehat{f_T}\subset T,\quad \operatorname{diam}_{\perp}\!T\sim \lambda^{-1/2},\ \operatorname{diam}_{\parallel}\!T\sim \lambda^{-1}.
\end{equation}
Define similarly $g_{T'}$ for tiles $T'\subset\Theta_{\lambda,\theta'}$.

\paragraph{Local bilinear form.}
Decompose
\[
  E_\lambda(f,g)=\sum_{T,T'} E_\lambda(f_T,g_{T'}),\qquad
  E_\lambda(f_T,g_{T'}):=T_{\lambda,\theta}f_T\cdot T_{\lambda,\theta'}g_{T'}.
\]
From the geometry of the caps it follows that for $T\subset\Theta_{\lambda,\theta}$ and $T'\subset\Theta_{\lambda,\theta'}$ with nontrivial interaction one has the angular separation
\begin{equation}\label{eq:B-angle}
  \angle(T,T')\gtrsim \lambda^{-1/2},
\end{equation}
and for fixed $T$ the number of admissible pairs $(T,T')$ is bounded by an absolute constant.

\begin{lemma}[$\ell^2$ tiled reduction]\label{lem:B-l2tile}
For any $\varepsilon>0$, window $I_\lambda(t_0)$, and functions $f,g\in L^2(\mathbb{R}^3)$
\begin{equation}\label{eq:B-l2tile-est}
  \big\|E_\lambda(f,g)\big\|_{L^6_{t,x}(I_\lambda(t_0)\times\mathbb{R}^3)}
  \ \lesssim_\varepsilon\ \lambda^{-1/2+\varepsilon}\,
  \Big(\sum_T \|f_T\|_{L^2_x}^2\Big)^{\!1/2}\!
  \Big(\sum_{T'} \|g_{T'}\|_{L^2_x}^2\Big)^{\!1/2}.
\end{equation}
\end{lemma}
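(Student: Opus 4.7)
My strategy for Lemma~\ref{lem:B-l2tile} is the standard bilinear extension scheme: decompose into admissible tile pairs via~\eqref{eq:B-tiles}, establish a uniform per-pair $L^6$ bound with substantial headroom over the target exponent, and then assemble with a Littlewood--Paley-type square-function / almost-orthogonality argument exploiting the separation~\eqref{eq:B-angle} and the output-corridor constraint~\eqref{eq:pair-admissible}. The plan explicitly absorbs the $\lambda^\varepsilon$ loss already present in Lemma~\ref{lem:A-L6}.

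\textbf{Step 1 (per-pair bound).} For an admissible pair $(T,T')$ I would write $E_\lambda(f_T,g_{T'})=(T_{\lambda,\theta}f_T)\cdot(T_{\lambda,\theta'}g_{T'})$ and apply Hölder as $L^\infty_{t,x}\times L^6_{t,x}\to L^6_{t,x}$ on the window $I_\lambda(t_0)$. Since $\widehat{f_T}$ is supported in a parallelepiped of volume $|T|\simeq\lambda^{-2}$, Cauchy--Schwarz in the Fourier integral (Bernstein on a tile) gives
\[
  \|T_{\lambda,\theta}f_T\|_{L^\infty_{t,x}}\ \lesssim\ |T|^{1/2}\,\|f_T\|_{L^2}\ \lesssim\ \lambda^{-1}\,\|f_T\|_{L^2},
\]
uniformly in $(t,x)$. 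For the second factor I invoke the local anisotropic Strichartz estimate established in Lemma~\ref{lem:A-L6}, $\|T_{\lambda,\theta'}g_{T'}\|_{L^6_{t,x}(I_\lambda(t_0)\times\mathbb{R}^3)}\lesssim_\varepsilon \lambda^{-1/2+\varepsilon}\|g_{T'}\|_{L^2}$. Multiplying yields the per-pair bound $\|E_\lambda(f_T,g_{T'})\|_{L^6_{t,x}(I_\lambda)}\lesssim_\varepsilon \lambda^{-3/2+\varepsilon}\|f_T\|_{L^2}\|g_{T'}\|_{L^2}$, which is \emph{stronger} than needed; this reserve will be spent during reassembly.

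\textbf{Step 2 (square-function assembly).} The bilinear piece $E_\lambda(f_T,g_{T'})$ has spatial Fourier support in the Minkowski sum $T+T'$. By~\eqref{eq:B-angle}, the anisotropic cap geometry, and the admissibility constraint~\eqref{eq:pair-admissible}, the family $\{T+T'\}_{(T,T')\,\text{adm}}$ has bounded overlap inside the output corridor $\{|\zeta|\lesssim\lambda\}$, and each $T$ pairs with $O(1)$ tiles $T'$ (and conversely). A Rubio-de-Francia / Littlewood--Paley square-function estimate on this finite-overlap family at the exponent $p=6$ (which is in the admissible range $2\le p<\infty$) gives
\[
  \Big\|\sum_{(T,T')}E_\lambda(f_T,g_{T'})\Big\|_{L^6_{t,x}(I_\lambda\times\mathbb{R}^3)}
  \ \lesssim_\varepsilon\
  \Big(\sum_{(T,T')}\|E_\lambda(f_T,g_{T'})\|_{L^6}^2\Big)^{1/2}.
\]
Substituting the per-pair bound from Step~1 and applying Cauchy--Schwarz over the admissible pairs (using finite multiplicity in each of $T$ and $T'$) reduces the right-hand side to $\lambda^{-3/2+\varepsilon}(\sum_T\|f_T\|_{L^2}^2)^{1/2}(\sum_{T'}\|g_{T'}\|_{L^2}^2)^{1/2}$, which is bounded by the target~\eqref{eq:B-l2tile-est} with comfortable margin.

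\textbf{Main obstacle.} The main technical step is the rigorous justification of the Rubio-de-Francia-type square-function inequality on the anisotropic tile family $\{T+T'\}$ at the endpoint $p=6$; unlike classical Littlewood--Paley decompositions, the tile sums are not lacunary, only finitely overlapping, so the standard Rubio de Francia theorem applies only after a careful verification of the bounded-overlap / covering hypotheses. An alternative route, should the direct square-function argument resist, is to work tile-by-tile via a full $TT^\ast$ on the bilinear operator with an anisotropic kernel analogous to~\eqref{eq:A-kernel}, and then perform the almost-orthogonality sum directly on the packet coefficients from~\eqref{eq:wp-parseval}. In either case the $\varepsilon$-loss in~\eqref{eq:B-l2tile-est} is broad enough to absorb both the logarithmic factors from the summation and the $\varepsilon$-deficit inherited from Lemma~\ref{lem:A-L6}.
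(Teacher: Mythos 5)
Your Step 1 is fine, and in fact cleaner and stronger than what the paper does per pair: the tile Bernstein bound $\|T_{\lambda,\theta}f_T\|_{L^\infty_{t,x}}\lesssim |T|^{1/2}\|f_T\|_{L^2}\simeq\lambda^{-1}\|f_T\|_{L^2}$ together with Lemma~\ref{lem:A-L6} gives $\lambda^{-3/2+\varepsilon}\|f_T\|_{L^2}\|g_{T'}\|_{L^2}$ per admissible pair, whereas the paper only records $\lambda^{-1/2+\varepsilon}$ per pair (via an $L^6\times L^6\to L^3$ step and an unspecified ``bilinear $TT^\ast$'' return to $L^6$). The genuine gap is your Step 2, and it is not merely the verification issue you flag.

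First, the premise that the family $\{T+T'\}$ of output supports has bounded overlap is false in exactly the regime this lemma serves. If $(T,T')$ is admissible and $v$ is a lattice shift keeping $T+v$ and $T'-v$ inside the caps, then $(T+v,\,T'-v)$ is again admissible (the constraint \eqref{eq:pair-admissible} depends only on the sum) and produces essentially the same output box; in the high--high$\to$low geometry a number of pairs growing like a power of $\lambda$ can pile up on one low-frequency output box --- this pile-up \emph{is} the resonant mechanism, and it is not excluded by the $O(1)$-partners-per-tile statement in \S\ref{subapp:B-tiles} (a perfect matching can still have all sums equal). Second, even granting bounded overlap, Rubio de Francia does not give the inequality you invoke: RdF is the forward square-function bound $\|(\sum_k|\Delta_kF|^2)^{1/2}\|_{L^p}\lesssim\|F\|_{L^p}$ for $p\ge2$, while what you need is the \emph{reverse} estimate $\|\sum_kF_k\|_{L^6}\lesssim(\sum_k\|F_k\|_{L^6}^2)^{1/2}$, which is false for general finitely-overlapping families at $p=6$ (take $F_k(x)=e^{ik\cdot x}\phi(x)$ over $N$ separated frequencies: the left side is $\sim N^{5/6}$, the right side $\sim N^{1/2}$); such $\ell^2$-aggregation above $L^2$ is precisely a decoupling-type statement, not a covering lemma. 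What bounded overlap does give by interpolation is only $\ell^{6/5}$-aggregation, and with the actual unbounded output overlap the loss is a power of $\lambda$ (the number of tiles per cap is polynomial in $\lambda$) that swamps the $\lambda^{-1}$ of headroom from Step 1. Your fallback suggestion --- a bilinear $TT^\ast$ at tile level combined with almost-orthogonality on the \emph{input} packet coefficients \eqref{eq:wp-parseval} --- is the right direction (and is essentially the route the paper gestures at, which organizes the orthogonality on the inputs rather than on the output supports), but as written it is an intention, not an argument, so the assembly step remains unproved.
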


\begin{proof}[Idea of the proof]
Fix a pair $(T,T')$. By \eqref{eq:B-tiles}–\eqref{eq:B-angle} each $TT^\ast$ kernel for $T_{\lambda,\theta}$ and $T_{\lambda,\theta'}$ has anisotropic decay as in \S\ref{subapp:A-proof}, yielding local estimates
\[
  \|T_{\lambda,\theta}f_T\|_{L^6_{t,x}(I_\lambda)}\lesssim_\varepsilon \lambda^{-1/2+\varepsilon}\|f_T\|_{L^2_x},\qquad
  \|T_{\lambda,\theta'}g_{T'}\|_{L^6_{t,x}(I_\lambda)}\lesssim_\varepsilon \lambda^{-1/2+\varepsilon}\|g_{T'}\|_{L^2_x}.
\]
By Hölder $L^6\cdot L^6\hookrightarrow L^3$ and a subsequent passage to $L^6$ for the product (via a standard bilinear $TT^\ast$ step at the tile level) we get
\[
  \|E_\lambda(f_T,g_{T'})\|_{L^6_{t,x}(I_\lambda)}\lesssim_\varepsilon \lambda^{-1/2+\varepsilon}\,\|f_T\|_{L^2_x}\,\|g_{T'}\|_{L^2_x}.
\]
Summing over $(T,T')$ by Cauchy–Schwarz and almost-orthogonality \eqref{eq:B-l2-orth}, taking into account that the number of relevant $T'$ for fixed $T$ is bounded (see \eqref{eq:B-angle}), gives \eqref{eq:B-l2tile-est}.
\end{proof}

\begin{remark}[On the passage to decoupling]\label{rem:B-to-dec}
Estimate \eqref{eq:B-l2tile-est} captures the “tiled’’ contribution $\lambda^{-1/2+\varepsilon}$. In the next subsection (\S\ref{subapp:B-surface}) restricting a pair of tiles to the phase level surface
\[
  \Sigma=\bigl\{(\xi,\eta)\in\mathbb{R}^3\times\mathbb{R}^3:\ \omega(\xi,\eta)=\text{const}\bigr\},\qquad \omega(\xi,\eta)=|\xi|+|\eta|-|\xi+\eta|,
\]
will give an additional factor $\lambda^{-\delta(6)}$ with $\delta(6)\in\{\tfrac14,\tfrac16+o(1)\}$, which completes the bilinear decoupling step without $\varepsilon$ losses; cf. \S\ref{sec:decoupling}.
\end{remark}

\subsection{Restriction to the level surface and $\varepsilon$-free decoupling}\label{subapp:B-surface}

Let $\lambda\gg1$ be fixed and $\theta,\theta'\in\Lambda_\lambda$ satisfy the angular separation $\angle(\theta,\theta')\gtrsim \lambda^{-1/2}$. After the tiling of \S\ref{subapp:B-tiles}, additionally restrict each pair $(T,T')$ to the \emph{rank~3 phase surface}
\begin{equation}\label{eq:B-surface}
  \Sigma_c:=\bigl\{(\xi,\eta)\in\mathbb{R}^3\times\mathbb{R}^3:\ |\xi|\sim|\eta|\sim\lambda,\ \omega(\xi,\eta)=c\bigr\},
  \qquad \omega(\xi,\eta)=|\xi|+|\eta|-|\xi+\eta|.
\end{equation}
The reduction to \eqref{eq:B-surface} is done via the coarea formula; the weight $|\nabla\omega|^{-1}$ is uniformly controlled in the wide region and absorbed into the amplitude (see the dictionary, item E.13). We obtain the model bilinear operator
\[
  \mathcal{E}_\lambda(f,g)(t,x)
  :=\iint_{\Sigma_c} e^{i(x\cdot(\xi+\eta)+t\,\omega(\xi,\eta))}\, f(\xi)\,g(\eta)\,d\sigma_{\Sigma_c}(\xi,\eta),
\]
where $d\sigma_{\Sigma_c}$ is the induced measure on $\Sigma_c$.

\begin{lemma}[Decoupling on $\Sigma_c$]\label{lem:B-dec-surface}
There exists $\delta(6)\in\left\{\tfrac14,\ \tfrac16+o(1)\right\}$ such that for any $f,g\in L^2(\mathbb{R}^3)$ localized to caps of radius $\lambda^{-1/2}$ and satisfying $\angle(\theta,\theta')\gtrsim \lambda^{-1/2}$,
\begin{equation}\label{eq:B-decL6}
  \bigl\|\mathcal{E}_\lambda(f,g)\bigr\|_{L^6_{t,x}(\mathbb{R}\times\mathbb{R}^3)}
  \ \lesssim\ \lambda^{-\delta(6)}\,\|f\|_{L^2_\xi}\,\|g\|_{L^2_\eta}.
\end{equation}
In the case of uniform curvature one has $\delta(6)=\tfrac14$ (benchmark), while under minimal principal curvature $\kappa_{\min}\sim \lambda^{-1}$ one allows $\delta(6)=\tfrac16+o(1)$ (the $\varepsilon$-free decoupling result for rank~3 surfaces).\footnote{See also the discussion in App.~D.3; in both settings the wide region decays faster than $\lambda^{-1}$ and does not control the dyadic outcome.}
\end{lemma}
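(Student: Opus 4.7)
The plan is to reduce the bilinear estimate on the surface $\Sigma_c$ to a canonical model via anisotropic rescaling, verify that the rescaled phase has a non-degenerate rank-$3$ Hessian, and then invoke a bilinear decoupling theorem for rank-$3$ surfaces with the appropriate exponent. First I would exploit the adapted frame $(\rho_1,\rho_2,\sigma)$ from Section~\ref{subsec:phase-directions} and the anisotropic renormalization $S_\lambda=\mathrm{diag}(\lambda^{1/2},\lambda^{1/2},1)$ from~\eqref{eq:anisorenorm}. By Lemma~\ref{lem:detA}, this renormalization makes the effective phase Hessian $A_e$ uniformly nondegenerate ($\det A_e\gtrsim 1$), so after the corresponding change of variables each cap $\Theta_{\lambda,\theta}$ becomes a unit ball, the tiles~\eqref{eq:B-tiles} become unit cubes, and the phase $\omega$ turns into a smooth function whose Hessian on the model surface has uniformly controlled eigenvalues.

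Second, I would recast $\mathcal{E}_\lambda(f,g)$ as an extension operator for a parametrized surface in $\mathbb{R}^{6}$, whose image under the output map $(\xi,\eta)\mapsto(\xi+\eta,\omega(\xi,\eta))\in\mathbb{R}^{3+1}$ is a rank-$3$ subvariety of codimension $2$. In the uniform-curvature benchmark, the classical $\ell^2$ bilinear decoupling at exponent $p=6$ for rank-$3$ surfaces yields
\[
  \bigl\|\mathcal{E}_\lambda(f,g)\bigr\|_{L^6_{t,x}} \ \lesssim_\varepsilon\ \lambda^{-1/4+\varepsilon}\,\|f\|_{L^2}\|g\|_{L^2},
\]
and the $\varepsilon$ is then removed via the $\varepsilon$-free refinement (Guth--Iliopoulou--Yang) applicable once the rank-$3$ condition is quantitative. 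In the regime where the minimal principal curvature degenerates to $\kappa_{\min}\sim\lambda^{-1}$, the same machinery applies with the weaker admissible exponent $\delta(6)=\tfrac16+o(1)$, and since both exponents exceed $0$, either yields~\eqref{eq:B-decL6}.

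Third, I would handle the foliation over the level parameter $c$ by Plancherel in the temporal variable. Writing $\mathcal{E}_\lambda(f,g)(t,x)=\int_\mathbb{R} e^{itc}\,\mathcal{G}_{\lambda,c}(f,g)(x)\,dc$, the $L^2_t$ decomposition at fixed $x$ allows $c$-level estimates to be aggregated without additional loss (as in the coarea representation~\eqref{eq:coarea}), because the level-set weight $|\nabla\omega|^{-1}$ is uniformly bounded in the wide region and the angular separation~\eqref{eq:B-angle} ensures bilinear transversality on each $\Sigma_c$ uniformly in $c$. Finally, combining with the tiled $\ell^2$ reduction of Lemma~\ref{lem:B-l2tile} and almost-orthogonality across caps yields~\eqref{eq:B-decL6}.

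The main obstacle is securing the $\varepsilon$-free form: standard Bourgain--Demeter decoupling produces an $\varepsilon$-loss, and eliminating it requires either exploiting the quantitative rank-$3$ bound $\det A_e\gtrsim 1$ of Lemma~\ref{lem:detA} in the uniform regime or invoking the more delicate Guth--Iliopoulou--Yang framework when $\kappa_{\min}$ degenerates. A secondary technical point is the careful tracking of the second fundamental form of $\Sigma_c$ through the anisotropic rescaling, to confirm that the transversality from~\eqref{eq:B-angle} survives as genuine bilinear separation on the rescaled surface; but since both candidate exponents yield decay beating $\lambda^{-1}$, any technical softening in that step does not affect the role of~\eqref{eq:B-decL6} in the overall balance (cf.~Remark~\ref{rem:delta6}).
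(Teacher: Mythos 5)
Your proposal follows essentially the same route as the paper's own proof sketch: anisotropically renormalize so that the effective Hessian of $\omega$ is uniformly nondegenerate (Lemma~\ref{lem:detA}), view $\Sigma_c$ as a rank-3 surface away from the diagonal, invoke the $\varepsilon$-free bilinear decoupling of Guth--Iliopoulou--Yang with the $\kappa_{\min}$-dependence absorbed into $\delta(6)\in\{\tfrac14,\tfrac16+o(1)\}$, and undo the rescaling. Your additional remarks on the $c$-foliation and the tiled $\ell^2$ reduction belong to the surrounding steps (the coarea reduction and \eqref{eq:B-wide-final}) rather than to this lemma itself, but they do not change the argument.
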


\begin{proof}[Proof sketch]
By a linear anisotropic renormalization of the tiles $S_\lambda=\mathrm{diag}(\lambda^{-1/2},\lambda^{-1/2},1)$ we bring the caps to unit scale and align the normals of the surface $\Sigma_c$; in these coordinates $\Sigma_c$ has rank~3 away from the diagonal. One applies $\varepsilon$-free bilinear decoupling for rank~3 (see \cite{GuthIliopoulouYang2024}), with the factor depending on $\kappa_{\min}$ accounted for in the exponent $\delta(6)$. Returning to the original scale gives \eqref{eq:B-decL6}.
\end{proof}

Combining Lemma~\ref{lem:B-dec-surface} with the tiled $\ell^2$ reduction \eqref{eq:B-l2tile-est} (see \S\ref{subapp:B-tiles}) and applying Cauchy–Schwarz over tile pairs yields the summary form for the wide region:
\begin{equation}\label{eq:B-wide-final}
  \bigl\|E_\lambda(f,g)\bigr\|_{L^6_{t,x}(I_\lambda(t_0)\times\mathbb{R}^3)}
  \ \lesssim_\varepsilon\ \lambda^{-1/2}\,\lambda^{-\delta(6)}\,
  \|f\|_{L^2_x}\,\|g\|_{L^2_x},
\end{equation}
i.e. $\lambda^{-3/4}$ for the benchmark $\delta(6)=\tfrac14$ and $\lambda^{-2/3+o(1)}$ for $\delta(6)=\tfrac16+o(1)$. In both cases the subsequent phase integration and Strichartz block yield a total decay exceeding $\lambda^{-1}$, so the dyadic outcome is determined by the narrow region (\S\ref{sec:narrow}).

\newpage

\section{Details of the narrow zone}
\label{app:narrow}

\subsection{Definition of the narrow zone}\label{subapp:C-def}

Fix a dyad $\lambda\gg1$ and a parameter
\[
  \tfrac12<\delta<\tfrac34.
\]
Set $\tau:=\xi+\eta$ and introduce the narrow interaction region
\begin{equation}\label{eq:narrow-set}
  \mathcal{N}_{\lambda,\delta}
  :=\Bigl\{(\xi,\eta)\in\mathbb{R}^3\times\mathbb{R}^3:\ |\xi|\sim|\eta|\sim\lambda,\ \ |\tau|\le \lambda^{-\delta}\Bigr\}.
\end{equation}
Geometrically, the condition $|\tau|\ll\lambda$ means that $\xi$ and $-\eta$ are almost collinear:
\begin{equation}\label{eq:collinearity}
  \pi-\angle(\xi,-\eta)\ \lesssim\ \frac{|\tau|}{\lambda}\ \le\ \lambda^{-1-\delta}.
\end{equation}
Under a dyadic covering of the sphere by caps of angular size $\lambda^{-1/2}$ the narrow cone \eqref{eq:narrow-set}
still intersects $\sim\lambda$ such caps (the angular grid step is $\lambda^{-1/2}$),
so the angular decomposition used in \S\ref{sec:decoupling} retains sparsity and
almost-orthogonality.

To isolate the low-frequency output we use the projector $P_{\le \lambda^{-\delta}}$.
On its range one has the \emph{one-sided lower bound} of norms
\begin{equation}\label{eq:low-to-Hm1}
  \|f\|_{\dot H^{-1}}
  \ \gtrsim\ \lambda^{\delta}\,\|f\|_{L^2}
  \qquad\text{if }\ \operatorname{supp}\widehat f\subset\{\,|\zeta|\le \lambda^{-\delta}\,\}.
\end{equation}
Note that the reverse bound is generally false (the mass of $\widehat f$ may concentrate arbitrarily close to zero), and \eqref{eq:low-to-Hm1} will not be used as an upper bound for the passage $L^2\!\to\!\dot H^{-1}$.

Below (\S\ref{subapp:C-null}–\S\ref{subapp:C-toHminus}) the passage to the $\dot H^{-1}$ norm 
in the narrow region is carried out differently: the operator $|\nabla|^{-1}\nabla\!\cdot$ acts as a multiplier
of order zero in $\dot H^{-1}$ (see \ref{subapp:E-pressure}), and the null–form factor provides an extra
multiplier $|\tau|/\lambda$; when $|\tau|\le \lambda^{-\delta}$ the weight $|\tau|^{-1}$ from the $\dot H^{-1}$ norm is absorbed
by this $|\tau|$, which yields exactly $\lambda^{-1}$ without appealing to a two-sided equivalence.
The specific choice $\delta=\tfrac23$ is convenient for bookkeeping scales and will be justified
in \S\ref{subapp:C-delta-choice}; meanwhile all conclusions remain valid for any
$\delta\in(\tfrac12,\tfrac34)$.

\subsection{Geometric scales}\label{subapp:C-geom}

In the notation of \eqref{eq:narrow-set} and \eqref{eq:collinearity} fix a frequency $\lambda\gg1$ and a parameter $\delta\in(\tfrac12,\tfrac34)$. Below we collect estimates of the counting and volumetric characteristics of the narrow region that will be used later in \S\ref{subapp:C-null}–\S\ref{subapp:C-toHminus}.

\paragraph{Number of relevant caps.}
A covering of the sphere by caps of angular size $\lambda^{-1/2}$ contains $\simeq \lambda$ directions. Since the narrow cone \eqref{eq:collinearity} has angular width $\lambda^{-1-\delta}\ll \lambda^{-1/2}$, it still intersects $\simeq \lambda$ such caps. In particular, the almost-orthogonality in $\theta$ from \S\ref{sec:decoupling} persists in the narrow region.

\paragraph{Thickness of the shell in the sum frequency.}
The condition $|\tau|=|\xi+\eta|\le \lambda^{-\delta}$ specifies a shell of thickness $\lambda^{-\delta}$ around the hyperplane $\{\tau=0\}$ in the space of frequency pairs $(\xi,\eta)$.

\paragraph{Global volume of the narrow region.}
Fixing $\xi$ with $|\xi|\sim\lambda$, the variable $\eta$ ranges over a ball of radius $\lambda^{-\delta}$; thus
\begin{equation}\label{eq:vol-global}
 \operatorname{Vol}\bigl(\mathcal{N}_{\lambda,\delta}\bigr)\ \simeq\ \lambda^{2}\cdot \lambda^{-3\delta}\ =\ \lambda^{2-3\delta}.
\end{equation}
For $\delta=\tfrac23$ this gives $\operatorname{Vol}\!\sim\!\lambda^{0}$, which is consistent with the guideline in \S\ref{sec:narrow}.

\paragraph{Per-cap volume.}
After angular localization to a single cap $\Theta_{\lambda,\theta}$ (area $\sim \lambda$ on the sphere of radius $\lambda$) the volume of the slice of the narrow region decreases by a factor of $\lambda$:
\begin{equation}\label{eq:vol-percap}
 \operatorname{Vol}_{\mathrm{cap}}\bigl(\mathcal{N}_{\lambda,\delta}\cap(\Theta_{\lambda,\theta}\times\mathbb{R}^3)\bigr)\ \simeq\ \lambda^{1-3\delta}.
\end{equation}
In particular, for $\delta=\tfrac23$ we get $\operatorname{Vol}_{\mathrm{cap}}\!\sim\!\lambda^{-1}$, which agrees with the $TT^\ast$ estimates and tile-pair counting used below.

\begin{remark}[Consistency with the Strichartz window scale]\label{rem:C-time-window}
The choice of time windows of length $\lambda^{-1/2}$ (\S\ref{subapp:A-setup}) is compatible with the thickness of the layer $|\tau|\le \lambda^{-\delta}$ for all $\delta\in(\tfrac12,\tfrac34)$ and does not worsen the frequency balance upon globalization in time (\S\ref{app:astrichartz}, \S\ref{sec:glue}).
\end{remark}

The summary \eqref{eq:vol-global}–\eqref{eq:vol-percap} will be used in the $L^2$ estimate of the narrow contribution and in the passage to the $\dot H^{-1}$ norm in \S\ref{subapp:C-null}–\S\ref{subapp:C-toHminus}.

\subsection{Null-form suppression}\label{subapp:C-null}

Fix $\lambda \gg 1$ and $\delta \in (\tfrac12,\tfrac34)$, and set $u_\lambda := P_\lambda u$, $\div u_\lambda = 0$.
The low-frequency output of the resonant block in the region \eqref{eq:narrow-set} is written as
\begin{equation}\label{eq:Rnar-def}
  R^{\mathrm{nar}}_\lambda
  \ :=\ P_{\le \lambda^{-\delta}}\ \nabla\!\cdot\!\big(u_\lambda \otimes u_\lambda\big).
\end{equation}
At the Fourier level (after symmetrization and using $\div u_\lambda = 0$) there appears
the null–form factor
\begin{equation}\label{eq:null-symbol}
  \widetilde B_{ij}(\xi,\eta)
  \ :=\ \frac{\xi_i\xi_j}{|\xi|^2}\ -\ \frac{\eta_i\eta_j}{|\eta|^2},\qquad \tau := \xi+\eta,
\end{equation}
so that
\[
  \widehat{R^{\mathrm{nar}}_\lambda}(\tau)
  \ =\ \int \widetilde B(\xi,\eta):\widehat{u_\lambda}(\xi)\otimes\widehat{u_\lambda}(\eta)\;
  \mathbf{1}_{\{|\tau|\le \lambda^{-\delta}\}}\, d\xi .
\]

\begin{lemma}[Null–form suppression in the narrow region]\label{lem:C-null-est}
For any $\delta \in (\tfrac12,\tfrac34)$ one has the estimate
\begin{equation}\label{eq:C-null-L2}
  \bigl\|R^{\mathrm{nar}}_\lambda\bigr\|_{L^2_x}
  \ \lesssim\ \lambda^{-1-\delta}\ \|u_\lambda\|_{\dot H^{1/2}}\ \|u_\lambda\|_{\dot H^{1/2}}.
\end{equation}
\end{lemma}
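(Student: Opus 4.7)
The plan is to carry out the Fourier-side null-form analysis outlined in \eqref{eq:null-symbol1}--\eqref{eq:null-suppress} and \eqref{eq:narrow-L2}, and then convert the resulting $L^2 \times L^2$ product into the stated $\dot H^{1/2}\times\dot H^{1/2}$ bound by Bernstein on the annulus $|\xi|\sim\lambda$. Starting from \eqref{eq:Rnar-def}, I would write $\widehat{R^{\mathrm{nar}}_\lambda}(\tau)$ as a bilinear convolution restricted to $|\tau|\le\lambda^{-\delta}$ and symmetrize the integrand in $(\xi,\eta)$. Using $\operatorname{div} u_\lambda=0$ (so that $\xi\cdot\widehat{u_\lambda}(\xi)=\eta\cdot\widehat{u_\lambda}(\eta)=0$), the symmetrized tensor symbol reduces to the null-form $\widetilde B_{ij}(\xi,\eta)=\xi_i\xi_j/|\xi|^2-\eta_i\eta_j/|\eta|^2$ of \eqref{eq:null-symbol} acting on the convolution of $\widehat{u_\lambda}$ with itself; the outer gradient from $\nabla\!\cdot$ is absorbed by transferring one derivative onto a single factor, producing $\widehat{\nabla u_\lambda}$ in one slot.

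\medskip
\noindent Next, I would invoke the operator-norm identity $\|\widetilde B(\xi,\eta)\|_{\mathrm{op}}=\|P_{\hat\xi}-P_{\hat\eta}\|_{\mathrm{op}}=\sin\angle(\hat\xi,\hat\eta)$ together with the trigonometric bound \eqref{eq:null-suppress}. On the support $\{|\xi|\sim|\eta|\sim\lambda,\ |\tau|\le\lambda^{-\delta}\}$ this yields $\|\widetilde B\|\lesssim|\tau|/\lambda\le\lambda^{-1-\delta}$ uniformly, and I would pull this scalar factor out of the $\xi$-integral pointwise, independently of the residual convolution structure.

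\medskip
\noindent After extracting the factor $\lambda^{-1-\delta}$, Plancherel in $\tau$ reduces the problem to controlling the $L^2_\tau$ norm of the convolution $|\widehat{u_\lambda}|*|\widehat{\nabla u_\lambda}|$; a standard bilinear Cauchy--Schwarz (or Young's inequality, noting that each factor is supported in the annulus of radius $\sim\lambda$ of bounded thickness) gives the bound $\|u_\lambda\|_{L^2}\,\|\nabla u_\lambda\|_{L^2}$. Bernstein on $|\xi|\sim\lambda$ then converts this into $\|u_\lambda\|_{\dot H^{1/2}}\cdot\|u_\lambda\|_{\dot H^{1/2}}$ via the identifications $\|u_\lambda\|_{L^2}\simeq\lambda^{-1/2}\|u_\lambda\|_{\dot H^{1/2}}$ and $\|\nabla u_\lambda\|_{L^2}\simeq\lambda^{1/2}\|u_\lambda\|_{\dot H^{1/2}}$, so the two rescalings cancel and one arrives exactly at \eqref{eq:C-null-L2}.

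\medskip
\noindent The main obstacle is the second step: the null-form inequality $\|\widetilde B\|\lesssim|\tau|/\lambda$ must hold \emph{pointwise} on the whole integration support, not merely in an averaged sense, for the scalar factor to be legitimately pulled outside. As stressed in the remark after \eqref{eq:narrow-H-1}, one must compare $\hat\xi$ with $-\hat\eta$ rather than with $\hat\tau$, since $\angle(\hat\xi,\hat\tau)$ can remain of order one even when $|\tau|\ll\lambda$; the cheap substitute $P_{\hat\xi}-P_{\hat\tau}$ would not deliver the required smallness. A secondary concern is the symmetrization step: one should verify that the full unsymmetrized expression $\nabla\!\cdot(u_\lambda\otimes u_\lambda)$ coincides with its symmetric counterpart modulo terms annihilated by $\operatorname{div} u_\lambda=0$, so that the null-form bound controls the entire bilinear form with no residual loss.
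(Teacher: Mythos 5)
Your proposal is correct and follows essentially the same route as the paper's proof in \S\ref{subapp:C-null}: pointwise null--form smallness $\|\widetilde B\|\lesssim|\tau|/\lambda\lesssim\lambda^{-1-\delta}$ obtained by comparing $\hat\xi$ with $-\hat\eta$, transfer of one gradient, Plancherel plus bilinear Cauchy--Schwarz giving $\|u_\lambda\|_{L^2}\|\nabla u_\lambda\|_{L^2}$, and the Bernstein identifications on the annulus $|\xi|\sim\lambda$ to land on $\|u_\lambda\|_{\dot H^{1/2}}^2$. The cautions you raise (comparing with $-\hat\eta$ rather than $\hat\tau$, and the role of $\operatorname{div}u_\lambda=0$ in the symmetrization) are precisely the points the paper itself flags, so no gap remains.
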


\begin{proof}
In the narrow region \eqref{eq:narrow-set} we have $|\tau|\le \lambda^{-\delta}$ and $|\xi|\sim|\eta|\sim\lambda$. Then the difference
of projectors \eqref{eq:null-symbol} satisfies
\begin{equation}\label{eq:C-null-geom}
  \|\widetilde B(\xi,\eta)\|
  \ \lesssim\ \angle\!\Big(\tfrac{\xi}{|\xi|},-\tfrac{\eta}{|\eta|}\Big)
  \ \lesssim\ \frac{|\tau|}{\lambda}\ \lesssim\ \lambda^{-1-\delta},
\end{equation}
where we used the almost-collinearity \eqref{eq:collinearity}. By Plancherel and Cauchy–Schwarz
\[
  \|R^{\mathrm{nar}}_\lambda\|_{L^2_x}
  \ \lesssim\ \lambda^{-1-\delta}\ \|u_\lambda\|_{L^2_x}\ \|\nabla u_\lambda\|_{L^2_x}.
\]
For $P_\lambda$ (cap of radius $\lambda^{-1/2}$) Bernstein inequalities yield
\[
  \|u_\lambda\|_{L^2_x}\ \lesssim\ \lambda^{-1/2}\|u_\lambda\|_{\dot H^{1/2}},\qquad
  \|\nabla u_\lambda\|_{L^2_x}\ \lesssim\ \lambda^{+1/2}\|u_\lambda\|_{\dot H^{1/2}},
\]
which implies \eqref{eq:C-null-L2}.
\end{proof}

\begin{remark}[Why not via $P_{\widehat\tau}$]
Writing with the projector onto $\widehat\tau$ does not provide universal smallness when $|\tau|\ll\lambda$,
since the angle $\angle(\widehat\xi,\widehat\tau)$ can be of order $1$ even for small $|\tau|$.
It is precisely the comparison of directions $\widehat\xi$ and $-\widehat\eta$ that gives the geometry \eqref{eq:C-null-geom}.
\end{remark}

\subsection{Passage to the $\dot H^{-1}$ norm}\label{subapp:C-toHminus}

In the narrow region $|\tau|=|\xi+\eta|\le \lambda^{-\delta}$ we pass to $\dot H^{-1}$ directly,
transferring the divergence onto the high mode and using null–form suppression (see
\eqref{eq:Rnar-def} and Lemma~\ref{lem:C-null-est}). At the Fourier level
\[
\widehat{R^{\mathrm{nar}}_\lambda}(\tau)
=\mathbf{1}_{\{|\tau|\le \lambda^{-\delta}\}}\,(i\tau)\!\cdot\!\!\int B(\xi,\eta)\,
\widehat u_\lambda(\xi)\,\widehat u_\lambda(\eta)\,\delta(\tau-\xi-\eta)\,d\xi d\eta,
\]
where $B(\xi,\eta)$ is the null–form symbol from Lemma~\ref{lem:C-null-est}. Then
\[
\|R^{\mathrm{nar}}_\lambda\|_{\dot H^{-1}}
=\bigl\|\,|\tau|^{-1}\widehat{R^{\mathrm{nar}}_\lambda}(\tau)\,\bigr\|_{L^2_\tau}
\lesssim \bigl\|\mathcal{T}(u_\lambda,u_\lambda)\bigr\|_{L^2_x},
\]
where the bilinear operator $\mathcal{T}$ has symbol $m(\xi,\eta,\tau):=|\tau|^{-1}(i\xi)\cdot B(\xi,\eta)$.
Since in the narrow region $\|B(\xi,\eta)\|\lesssim |\tau|/\lambda$ when $|\xi|\sim|\eta|\sim\lambda$
(Lemma~\ref{lem:C-null-est}), we obtain $|m(\xi,\eta,\tau)|\lesssim |\tau|^{-1}\,|\xi|\,
\frac{|\tau|}{\lambda}\lesssim 1$. Hence
\begin{equation}\label{eq:C-Hminus}
  \bigl\|R^{\mathrm{nar}}_\lambda\bigr\|_{\dot H^{-1}}
  \ \lesssim\ \|u_\lambda\|_{L^2}\,\|u_\lambda\|_{L^2}
  \ \simeq\ \lambda^{-1}\,\|u_\lambda\|_{\dot H^{1/2}}^{2}.
\end{equation}

Thus, at a fixed dyad the exponent is exactly $\lambda^{-1}$ and does not depend on the choice
of $\delta \in (\tfrac12,\tfrac34)$. At frequency $\lambda$ one also has $\|u_\lambda\|_{\dot H^{1}}\simeq \lambda^{1/2}\|u_\lambda\|_{\dot H^{1/2}}$,
and therefore \eqref{eq:C-Hminus} is equivalent to the form
\[
  \bigl\|R^{\mathrm{nar}}_\lambda\bigr\|_{\dot H^{-1}}
  \ \lesssim\ \lambda^{-3/2}\,\|u_\lambda\|_{\dot H^{1/2}}\,\|u_\lambda\|_{\dot H^{1}},
\]
convenient when merging norms in §\ref{sec:main}.

\subsection{Summation over frequencies}\label{subapp:C-sum-lambda}

Recall the narrow-zone estimate from \eqref{eq:C-Hminus}:
\[
  \bigl\|R^{\mathrm{nar}}_\lambda\bigr\|_{\dot H^{-1}}
  \ \lesssim\ \lambda^{-1}\,\|u_\lambda\|_{\dot H^{1/2}}^{2},\qquad \lambda\gg1.
\]
Using the equivalence on a dyad $\|u_\lambda\|_{\dot H^{1}}\simeq \lambda^{1/2}\|u_\lambda\|_{\dot H^{1/2}}$, we obtain the convenient “mixed’’ form
\begin{equation}\label{eq:C-Hminus-mixed}
  \bigl\|R^{\mathrm{nar}}_\lambda\bigr\|_{\dot H^{-1}}
  \ \lesssim\ \lambda^{-3/2}\,\|u_\lambda\|_{\dot H^{1/2}}\,\|u_\lambda\|_{\dot H^{1}}.
\end{equation}
Now sum over dyads $\lambda=2^k$ using the supremum norms
\[
  X_\sigma:=\sup_{t\in[0,T]}\|u(t)\|_{\dot H^\sigma},\qquad \sigma\in\bigl\{\tfrac12,1\bigr\}.
\]
From \eqref{eq:C-Hminus-mixed} and the trivial estimates $\|u_\lambda(t)\|_{\dot H^\sigma}\le X_\sigma$ it follows that
\begin{equation}\label{eq:C-sum}
  \sum_{\lambda\gg1}\bigl\|R^{\mathrm{nar}}_\lambda(t)\bigr\|_{\dot H^{-1}}
  \ \lesssim\ \Big(\sum_{\lambda\gg1}\lambda^{-3/2}\Big)\,X_{1/2}\,X_{1}
  \ \lesssim\ X_{1/2}\,X_{1},\qquad t\in[0,T].
\end{equation}
Note also that the original form \eqref{eq:C-Hminus} yields an alternative (slightly stronger in appearance) summed bound
\[
  \sum_{\lambda\gg1}\bigl\|R^{\mathrm{nar}}_\lambda(t)\bigr\|_{\dot H^{-1}}
  \ \lesssim\ \Big(\sum_{\lambda\gg1}\lambda^{-1}\Big)\,X_{1/2}^{\,2}
  \ \lesssim\ X_{1/2}^{\,2},
\]
but for uniformity with the wide region and the statement of the main theorem we will use the product $X_{1/2}X_{1}$ from \eqref{eq:C-sum}. This completes the handling of the narrow region before merging estimates in §\ref{sec:glue}.

\subsection{Why $\delta=\tfrac23$ is a convenient choice}\label{subapp:C-delta-choice}

The parameter $\delta$ in the definition of the narrow region \eqref{eq:narrow-set} is allowed in the interval $\tfrac12<\delta<\tfrac34$ and does \emph{not} affect the final dyadic exponent in the passage to $\dot H^{-1}$: from \eqref{eq:low-to-Hm1} and Lemma \ref{lem:C-null-est} we obtain the balance
\[
\lambda^{\delta}\cdot\lambda^{-1-\delta}=\lambda^{-1},
\]
which is what is used in \eqref{eq:C-Hminus}. Nevertheless, the choice $\delta=\tfrac23$ is convenient for several technical reasons.

\begin{enumerate}[label=(\roman*)]
\item \textbf{Clean volume exponents.} From formulas \eqref{eq:vol-global}–\eqref{eq:vol-percap} at $\delta=\tfrac23$
\[
\operatorname{Vol}\bigl(\mathcal N_{\lambda,\delta}\bigr)\simeq \lambda^{2-3\delta}=\lambda^{0},\qquad
\operatorname{Vol}_{\mathrm{cap}}\bigl(\mathcal N_{\lambda,\delta}\cap(\Theta_{\lambda,\theta}\times\mathbb{R}^3)\bigr)\simeq \lambda^{1-3\delta}=\lambda^{-1}.
\]
Such “integer’’ bookkeeping simplifies $TT^\ast$ estimates at the level of one cap and summation over $\theta$ without additional $\lambda^{\pm\varepsilon}$ remainders.
\item \textbf{Compatibility with the time scale.} For any $\delta\ge\tfrac12$ the low-frequency output $|\tau|\le\lambda^{-\delta}$ is consistent with the windows $|I_\lambda|=\lambda^{-1/2}$ from \S\ref{subapp:A-setup}: there is no worsening of the frequency exponent when patching in time (see also \S\ref{app:astrichartz} and \S\ref{sec:glue}).
\item \textbf{Preservation of angular structure.} For $\delta>\tfrac12$ the narrow cone still intersects $\simeq\lambda$ caps $\Theta_{\lambda,\theta}$ (grid step $\lambda^{-1/2}$), so the sparsity and almost-orthogonality in angles used in \S\ref{sec:decoupling} remain fully valid.
\end{enumerate}

\paragraph{Conclusion.} Any $\delta\in(\tfrac12,\tfrac34)$ leads to the same dyadic exponent $\lambda^{-1}$ in \eqref{eq:C-Hminus} and \eqref{eq:C-sum}, but the value $\delta=\tfrac23$ makes the volume counting and angular summation maximally transparent and does not change any of the subsequent steps of the proof.

\subsection{Conclusion}\label{subapp:C-conclusion}

The outcome of the narrow zone is as follows:
\begin{enumerate}
\item The geometry of the region $|\tau|=|\xi+\eta|\le\lambda^{-\delta}$, $\tfrac12<\delta<\tfrac34$, defines the almost-collinear regime (see~\eqref{eq:narrow-set}, \eqref{eq:collinearity}), compatible with the angular grid of caps of radius $\lambda^{-1/2}$ and almost-orthogonality in directions.
\item At the symbol level, null–form suppression of size $\lesssim \lambda^{-1-\delta}$ arises; this yields the $L^2$ estimate of the narrow output (Lemma~\ref{lem:C-null-est}).
\item Passage to $\dot H^{-1}$ on the low output frequencies $|\zeta|\le \lambda^{-\delta}$ adds a factor $\lambda^\delta$ (see~\eqref{eq:low-to-Hm1}), and on each dyad we obtain exactly
\[
\bigl\|R^{\mathrm{nar}}_\lambda\bigr\|_{\dot H^{-1}}\ \lesssim\ \lambda^{-1}\,\|u_\lambda\|_{\dot H^{1/2}}^{2},
\]
that is, formula \eqref{eq:C-Hminus}.
\item Summation over $\lambda$ yields a log-free bound via the product of the supremum norms $X_{1/2}$ and $X_{1}$ (see~\eqref{eq:C-sum}); the choice $\delta=\tfrac23$ is convenient but inessential for the exponent.
\end{enumerate}
Since the wide region decays faster than $\lambda^{-1}$, the outcome at one frequency is determined precisely by the narrow region, and the global patching over frequencies and time is carried out in \S\ref{sec:glue}. This completes the narrow-zone analysis; we next turn to the ``heat$\to$wave'' technique and related phase estimates in App.~\ref{app:heat}.

\newpage

\section{Passage to the heat version}\label{app:heat}

\subsection{Heat\,$\to$\,wave bridge on short windows}\label{subapp:D-bridge}
Let $P_\lambda$ be the dyadic projector onto the layer $\{|\xi|\sim\lambda\}$, $\lambda\gg1$, and
\[
I_\lambda(t_0):=[t_0,\ t_0+\lambda^{-1/2}] .
\]
Introduce the wave flow $S(t)f:=\mathcal{F}^{-1}\!\big(e^{it|\xi|}\widehat f(\xi)\big)$.

\begin{lemma}[heat\,$\to$\,wave bridge]\label{lem:D-bridge}
There exists an absolute constant $C>0$ such that for any $F\in L^2\!\big(I_\lambda(t_0);L^2_x\big)$
\begin{equation}\label{eq:D-bridge}
\Bigl\|\,|\nabla|^{-1}P_\lambda\!\int_{t_0}^t\!\bigl(e^{(t-\tau)\Delta}-S(t-\tau)\bigr)P_\lambda F(\tau)\,d\tau\Bigr\|_{L^2_t\!\big(I_\lambda(t_0);\,L^2_x\big)}
\ \le\ C\,\lambda^{-3/2}\,\|F\|_{L^2_t\!\big(I_\lambda(t_0);\,L^2_x\big)}.
\end{equation}
In other words, on windows of length $\lambda^{-1/2}$ the replacement of the heat multiplier by the wave phase yields in the $L^2_t\dot H^{-1}_x$ norm a remainder of order $O(\lambda^{-3/2})$, i.e. strictly below the target exponent $\lambda^{-1}$.
\end{lemma}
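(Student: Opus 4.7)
The plan is to handle the heat and wave Duhamel contributions separately by the triangle inequality and show that each of the resulting multiplier operators maps $L^2_t L^2_x$ to itself with operator norm $\lesssim \lambda^{-3/2}$ on the frequency shell $|\xi|\sim\lambda$; the wave piece saturates the bound, while the heat piece is of lower order. In particular, no cancellation between the multipliers $e^{-s|\xi|^2}$ and $e^{is|\xi|}$ is required, and the argument reduces to elementary one-dimensional multiplier bookkeeping.

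Concretely, I would first apply Plancherel in $x$ to reduce, for each fixed $\xi$ with $|\xi|\sim\lambda$, the estimate to the scalar Duhamel integrals
\[
H^{\sharp}(t,\xi)\;:=\;|\xi|^{-1}\int_{t_0}^{t}m^{\sharp}(t-\tau,\xi)\,\widehat F(\tau,\xi)\,d\tau,\qquad\sharp\in\{h,w\},
\]
with $m^{h}(s,\xi)=e^{-s|\xi|^2}$ and $m^{w}(s,\xi)=e^{is|\xi|}$. For fixed $\xi$ this is a convolution on $I_\lambda(t_0)$ with kernel $k^{\sharp}(s):=m^{\sharp}(s,\xi)\,\mathbf 1_{[0,\lambda^{-1/2}]}(s)$, and Young's inequality $L^1_s\ast L^2_s\to L^2_s$ yields
\[
\|H^{\sharp}(\cdot,\xi)\|_{L^2_t(I_\lambda(t_0))}\;\le\;|\xi|^{-1}\,\|k^{\sharp}\|_{L^1_s}\,\|\widehat F(\cdot,\xi)\|_{L^2_t(I_\lambda(t_0))}.
\]

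Evaluating the two $L^1$-norms is immediate: a change of variables $u=s|\xi|^2$ gives $\|k^{h}\|_{L^1}\le |\xi|^{-2}\lesssim \lambda^{-2}$ from the exponential decay of the heat factor, while $\|k^{w}\|_{L^1}=\lambda^{-1/2}$ since $|m^{w}|\equiv 1$ and the kernel is the indicator of the window. Combining these with the $|\xi|^{-1}\sim \lambda^{-1}$ prefactor and integrating in $\xi$ over $|\xi|\sim\lambda$ (using Fubini and the orthogonality of the $L^2_\xi$ norm) gives
\[
\|H^{h}\|_{L^2_t L^2_x}\;\lesssim\;\lambda^{-3}\,\|F\|_{L^2_t L^2_x},\qquad
\|H^{w}\|_{L^2_t L^2_x}\;\lesssim\;\lambda^{-3/2}\,\|F\|_{L^2_t L^2_x}.
\]
The triangle inequality applied to $e^{(t-\tau)\Delta}-S(t-\tau)$ then produces the asserted $\lambda^{-3/2}$ bound, with the wave term dictating the scale.

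The main conceptual obstacle, and the reason a naive pointwise comparison of multipliers would fail, is that on the regime $|\xi|\sim\lambda$, $s\in[0,\lambda^{-1/2}]$, the heat factor is vanishingly small (of size $e^{-\lambda^{3/2}}$ at the endpoint) while the wave factor has modulus one, so $|m^{h}-m^{w}|$ is essentially of size $1$ and offers no pointwise cancellation. The correct viewpoint is that on such short windows the Duhamel operators built from either multiplier are individually small in the norm $L^2_t\dot H^{-1}_x$, and the bridge simply swaps one for the other up to a controlled residual. Beyond this conceptual point, the only care required is to restrict the kernels to the window $[0,\lambda^{-1/2}]$ before applying Plancherel and Young's inequality, and to verify that the constants are independent of $t_0$ (which follows from translation invariance of the multipliers in $s$); both are routine.
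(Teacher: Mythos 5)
Your argument is correct, and it rests on the same two gains as the paper's proof — the $L^1_s$ size $\lambda^{-1/2}$ of the time kernel on the short window and the factor $\lambda^{-1}$ from $|\nabla|^{-1}P_\lambda$, with no cancellation between the two multipliers — but the implementation differs in a useful way. The paper first freezes the wave phase, replacing $e^{i(t-\tau)|\xi|}$ by the frequency-independent $e^{i(t-\tau)\lambda}$, bounds the difference kernel $|e^{-s\lambda^2}-e^{is\lambda}|\le 2$ and applies Schur's test to the purely temporal operator, and only afterwards restores the true phase by a fundamental-theorem-of-calculus step in the radial variable $\rho=|\xi|$, moving the derivative onto the Littlewood--Paley cutoff; that last restoration step is the least transparent part of the paper's argument. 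You instead pass to the Fourier side via Plancherel, work at each fixed $\xi$ with the genuine multipliers $e^{-s|\xi|^2}$ and $e^{is|\xi|}$, split by the triangle inequality, and apply Young's inequality $L^1_s * L^2_s\to L^2_s$ separately to the heat piece (which even yields the stronger $\lambda^{-3}$) and the wave piece (which gives the deciding $\lambda^{-3/2}$). This avoids the frequency-freezing and phase-restoration step entirely, makes the $t_0$-independence manifest by translation invariance, and makes explicit the conceptual point — which the paper's bound $k_\lambda(s)\le 2$ also implicitly uses — that no pointwise comparison of the two multipliers is needed, only the shortness of the window; so your route is an equally valid, and arguably cleaner, proof of \eqref{eq:D-bridge}.
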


\begin{proof}
Consider the time operator
\[
(\mathcal{K}_\lambda F)(t):=\int_{t_0}^t\!\bigl(e^{(t-\tau)\Delta}-e^{i(t-\tau)\lambda}\bigr)P_\lambda F(\tau)\,d\tau .
\]
Set $k_\lambda(s):=\bigl|e^{-s\lambda^2}-e^{is\lambda}\bigr|$ for $s\ge0$. Then for $t\in I_\lambda(t_0)$
\[
\int_{t_0}^t k_\lambda(t-\tau)\,d\tau=\int_{0}^{t-t_0}k_\lambda(s)\,ds
\ \le\ \int_{0}^{\lambda^{-1/2}}\!2\,ds \ \lesssim\ \lambda^{-1/2}.
\]
A similar estimate holds for the supremum over $\tau$. By Schur’s test we conclude
\[
\|\mathcal{K}_\lambda\|_{L^2_t\to L^2_t\!(I_\lambda(t_0))}\ \lesssim\ \lambda^{-1/2}.
\]
Furthermore,
\[
\bigl\||\nabla|^{-1}P_\lambda\bigr\|_{L^2_x\to L^2_x}\ \simeq\ \lambda^{-1}.
\]
Combining the two bounds, we obtain
\[
\bigl\||\nabla|^{-1}P_\lambda\,\mathcal{K}_\lambda F\bigr\|_{L^2_tL^2_x}\ \lesssim\ \lambda^{-1}\lambda^{-1/2}\,\|F\|_{L^2_tL^2_x}
=\lambda^{-3/2}\|F\|_{L^2_tL^2_x}.
\]

It remains to restore the true phase $e^{i(t-\tau)|\xi|}$ in place of $e^{i(t-\tau)\lambda}$. To this end, apply the fundamental theorem of calculus in the radial variable $\rho=|\xi|$ and move the derivative onto the cutoff $P_\lambda$; the regularity of the mask and the scale $\partial_\rho P_\lambda=O(\lambda^{-1})$ yield the same Schur estimate in time, so the additional contribution is $\lesssim\lambda^{-3/2}$ in $L^2_t\dot H^{-1}_x$. This proves \eqref{eq:D-bridge}.
\end{proof}

\paragraph{Comment.}
Lemma \ref{lem:D-bridge} allows one to use the “wave’’ representation in the phase–geometric steps on each window $I_\lambda(t_0)$; the remainder $O(\lambda^{-3/2})$ in $L^2_t\dot H^{-1}_x$ does not affect the final balance at exponent $\lambda^{-1}$. See also the “hard’’ form of the argument via Schur’s test in App.~\ref{app:heat}, item \ref{subapp:E-rescale}.

\subsection{Integration by parts with the wave phase}\label{subapp:D-ibp}

Consider the bilinear phase integral in the wide angular region $\angle(\xi,\eta)\gtrsim \lambda^{-1/2}$:
\begin{equation}\label{eq:D-ibp-I}
  I(t,x)
  :=\iint e^{\,i\phi(\xi,\eta;x,t)}\,a_\lambda(\xi,\eta)\,\widehat f(\xi)\,\widehat g(\eta)\,d\xi\,d\eta,
  \qquad 
  \phi(\xi,\eta;x,t):=x\!\cdot\!(\xi+\eta)+t\,\omega(\xi,\eta),
\end{equation}
where $\omega(\xi,\eta)=|\xi|+|\eta|-|\xi+\eta|$, and $a_\lambda\in C_c^\infty$ is a smooth amplitude
localizing $(\xi,\eta)$ in the layer $|\xi|\sim|\eta|\sim\lambda$ and in caps of radius $\lambda^{-1/2}$ (as in \S\ref{sec:decoupling}).

\begin{lemma}[Two-angle IBP in the wide region]\label{lem:D-ibp2}
Let $|\xi|\sim|\eta|\sim\lambda\gg1$, $\angle(\xi,\eta)\gtrsim \lambda^{-1/2}$ and $\|a_\lambda\|_{C^2}\le C_a$.
Let $\tau:=\xi+\eta$ and choose an orthonormal frame $\{\rho_1,\rho_2,\widehat\tau\}$
in the plane $\mathrm{span}\{\xi,\eta\}$, as in \S\ref{sec:phase}. Then
\begin{equation}\label{eq:D-ibp2}
  |I(t,x)|
  \ \lesssim\ \lambda^{-1}\,C_a\,
  \|\widehat f\|_{L^2_\xi}\,\|\widehat g\|_{L^2_\eta},
\end{equation}
uniformly for $(t,x)\in\mathbb{R}\times\mathbb{R}^3$.
\end{lemma}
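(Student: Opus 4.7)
The plan is to carry out two successive angular integrations by parts in the directions $\rho_1, \rho_2$ of the frame adapted to the interaction geometry in Section~\ref{sec:phase}, reusing the scheme of Corollary~\ref{cor:twoIBP} and the Hessian non-degeneracy established in Lemma~\ref{lem:detA}. The statement is essentially the pointwise counterpart of Corollary~\ref{cor:twoIBP}, so the proof has the same backbone: transfer oscillation from $e^{i\phi}$ onto the amplitude through two angular IBPs, then close with Cauchy--Schwarz in the frequency variables.

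First, in the adapted frame $(\rho_1,\rho_2,\widehat\tau)$ I define the angular IBP operator $\mathcal{L}_j h := \partial_{\rho_j}\!\bigl((i\,\partial_{\rho_j}\phi)^{-1} h\bigr)$ acting on the $\xi$-integrand, so that $\mathcal{L}_j^{\ast} e^{i\phi} = e^{i\phi}$. Applying $\mathcal{L}_1$ and then $\mathcal{L}_2$ under the $\xi$-integral produces an effective amplitude $a_\lambda^{(2)}$ whose pointwise size is controlled by $\|a_\lambda\|_{C^2}\le C_a$ together with the phase weights $(\partial_{\rho_j}\phi)^{-1}$ and $(\partial_{\rho_j}\phi)^{-2}\,\partial^2_{\rho_j\rho_j}\phi$. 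Since $\rho_1,\rho_2$ are tangent to the cap in frequency space, neither $\widehat f$ nor $\widehat g$ is differentiated by this procedure, so the $L^2$ norms on the right-hand side are preserved.

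The core of the argument is the control of the phase denominators. The first angular derivative $\partial_{\rho_j}\phi = x\cdot\rho_j + t\,\partial_{\rho_j}\omega$ is not uniformly large in $(x,t)$, so I split the $\xi$-integration domain by a smooth cutoff at the threshold $|\partial_{\rho_j}\phi|\asymp \lambda^{1/2}$. On the non-stationary piece the two IBPs gain $\lambda^{-1}$ directly from the product of the two weights. On the stationary piece the bound $|\partial^2_{\rho_j\rho_j}\phi|\gtrsim |t|\,\lambda^{-1}$, coming from the block estimate $B\simeq \lambda^{-1}$ of Lemma~\ref{lem:block}, controls the measure of the set where $|\partial_{\rho_j}\phi|$ is small, yielding an equivalent $\lambda^{-1}$ gain through a Van der Corput argument. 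To close, I pair $a_\lambda^{(2)} e^{i\phi}$ against $\widehat f \otimes \widehat g$ in $L^2_{\xi,\eta}$ by Cauchy--Schwarz, exploiting that the wide support has volume $\lesssim \lambda^2$.

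The main obstacle is handling the stationary region cleanly. The rank-3 non-degeneracy of $A_{\mathrm{eff}}$ (Lemma~\ref{lem:detA}), through its Schur structure, guarantees that the two angular directions $\rho_1,\rho_2$ yield independent stationary phase gains; however, keeping the boundary contributions from the non-stationary/stationary decomposition under control \emph{without a logarithmic loss} requires a careful choice of the cutoff profile and, if necessary, an iteration of the IBP in the intermediate regime. An additional technical point is the dependence of the critical threshold on $|t|$, which has to be absorbed uniformly so that the final constant does not degrade when $t\to 0$; this is handled by noting that the $C^2$-control on $a_\lambda$ gives a lossless pointwise estimate in the very-small-$t$ regime, where the oscillation provided by $x\cdot\tau$ alone already suffices.
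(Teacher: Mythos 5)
Your backbone is the same as the paper's: two angular integrations by parts along $\rho_1,\rho_2$ in the adapted frame, justified by the nondegeneracy of the effective Hessian (Lemma~\ref{lem:block}, Lemma~\ref{lem:detA}), closed by Cauchy--Schwarz. You depart from it by inserting a stationary/non-stationary splitting with a Van der Corput argument, because you (rightly) do not accept a uniform-in-$(t,x)$ lower bound on $\rho_j\!\cdot\!\nabla_{(\xi,\eta)}\phi$, whereas the paper's sketch simply asserts such a bound and never splits. The concern is legitimate, but your repair leaves genuine gaps. The first is structural: the claim that the IBPs differentiate neither $\widehat f$ nor $\widehat g$ is not tenable. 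The directions $\rho_1,\rho_2$ are frequency directions, and in \eqref{eq:D-ibp-I} any genuine integration by parts in $(\xi,\eta)$ --- along $(\rho_j,0)$, $(0,\rho_j)$, or any combination --- lands derivatives on the product $a_\lambda\,\widehat f\,\widehat g$; since $\widehat f,\widehat g$ are merely $L^2$, they cannot absorb derivatives. To make the IBP legitimate you must first separate the data (so that the IBP acts on a smooth $TT^\ast$-type kernel) or discretize into wave packets/tiles where the per-tile data enter only as coefficients; neither mechanism appears in your write-up, so the assertion that ``the $L^2$ norms on the right-hand side are preserved'' is unjustified.

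Second, the stationary piece does not close as described. A Van der Corput or sublevel-set bound needs $L^\infty$/BV control of the full integrand (again including the data), and the gain it could deliver is governed by the measure of the set $\{|\partial_{\rho_j}\phi|\lesssim\lambda^{1/2}\}$: since $\partial^2_{\rho_j\rho_j}\phi=t\,\partial^2_{\rho_j\rho_j}\omega$ with $|\partial^2_{\rho_j\rho_j}\omega|\sim\lambda^{-1}$, that set has $\rho_j$-extent $\gtrsim\min\{\lambda^{1/2},\,\lambda^{3/2}/|t|\}$, so for $|t|\lesssim\lambda$ (in particular for all $t$ in a fixed window $[0,T]$) it can be the entire cap and the splitting produces no gain precisely in the regime that matters; moreover your small-$|t|$ fallback (``the oscillation provided by $x\cdot\tau$ alone suffices'') fails at $x=0$, since at $(t,x)=(0,0)$ the integral carries no oscillation whatsoever and no choice of cutoff or iterated IBP can extract $\lambda^{-1}$ from $L^2$ data there. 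Finally, the closing bookkeeping is off: the pair support has volume $\sim\lambda^{4}$ even for cap-localized data (and $\sim\lambda^{6}$ for full annuli), not $\lambda^{2}$, so the Cauchy--Schwarz step as you describe it yields an extra factor of order $\lambda^{2}$ rather than the bound \eqref{eq:D-ibp2}. If you keep your finer decomposition, the argument must be rerun at the kernel or wave-packet level, with the uniform-in-$(t,x)$ constant re-derived there; as written, these steps fail.
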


\begin{proof}[Sketch of proof]
Work in coordinates $(\rho_1,\rho_2,\sigma)$, where $\sigma$ is the longitudinal–difference direction
in which $\tau=\xi+\eta$ is accounted for (see \S\ref{sec:phase}, \S\ref{sec:wp-anis}).
In the wide region, the effective minor of the phase Hessian is nondegenerate
(after the anisotropic renormalization $S_\lambda=\mathrm{diag}(\lambda^{-1/2},\lambda^{-1/2},1)$):
\[
  \det A_{\mathrm{eff}}(\xi,\eta)\ \gtrsim\ \lambda^{-2},
\]
which is equivalent to \eqref{eq:wide-zone}–\eqref{eq:detAeff} in \S\ref{sec:phase}/\S\ref{sec:wp-anis}.
Hence $(\rho_j\!\cdot\!\nabla_{(\xi,\eta)}\phi)\sim\lambda$ for $j=1,2$.
Apply twice the angular lowering operator
\[
  L_j:=\frac{1}{i\,(\rho_j\!\cdot\!\nabla\phi)}\,\rho_j\!\cdot\!\nabla_{(\xi,\eta)},\qquad
  L_j e^{\,i\phi}=e^{\,i\phi},\quad j=1,2,
\]
and integrate by parts along $\rho_1,\rho_2$.
Each step contributes a factor $\lambda^{-1}$ and derivatives of $a_\lambda$ of order at most two,
which gives \eqref{eq:D-ibp2} after applying Cauchy–Schwarz to $\widehat f,\widehat g$.
\end{proof}

\begin{corollary}[Phase block $\lambda^{-3/2}$ on windows $I_\lambda$]\label{cor:D-ibp32}
On each window $I_\lambda(t_0)=[t_0,t_0+\lambda^{-1/2}]$ (see \S\ref{subapp:D-bridge}) estimate
\eqref{eq:D-ibp2} combines with the local anisotropic Strichartz estimate for the flow
(the $TT^\ast$ argument, App.~\ref{app:astrichartz}), which gives an additional factor $\lambda^{-1/2}$
in the $L^2_t$ norm. As a result the corresponding operator satisfies
\begin{equation}\label{eq:D-ibp-phase}
  \bigl\|I(\cdot,\cdot)\bigr\|_{L^2_t(I_\lambda;L^2_x)}
  \ \lesssim\ \lambda^{-3/2+\varepsilon}\,
  \|\widehat f\|_{L^2_\xi}\,\|\widehat g\|_{L^2_\eta}.
\end{equation}
\end{corollary}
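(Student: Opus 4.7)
I would assemble \eqref{eq:D-ibp-phase} from two independent gains already established: the factor $\lambda^{-1}$ supplied by the two angular IBPs of Lemma~\ref{lem:D-ibp2}, and the factor $\lambda^{-1/2+\varepsilon}$ supplied by the local anisotropic Strichartz bound of Lemma~\ref{lem:A-L6} on the window $I_\lambda$ of length $\lambda^{-1/2}$. As a preliminary I would invoke Lemma~\ref{lem:D-bridge} to legitimize working entirely with the wave phase $e^{it|\xi|}$; the price is a remainder of order $O(\lambda^{-3/2})$ in $L^2_t\dot H^{-1}_x$, which is strictly below the target exponent and can be discarded.

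The first step is to execute the two angular IBPs of Lemma~\ref{lem:D-ibp2} along $\rho_1,\rho_2$ adapted to the pair $(\xi,\eta)$. By the nondegeneracy of the effective Hessian (Lemma~\ref{lem:detA}), each IBP supplies $\lambda^{-1/2}$, reducing the integral to the form $I=\lambda^{-1}\tilde I$, where $\tilde I$ has a renormalized amplitude $\tilde a_\lambda$ obeying the anisotropic derivative bounds $|\partial^\alpha_{\rho,\sigma}\tilde a_\lambda|\lesssim \lambda^{|\alpha|/2}$ with the same spectral support as $a_\lambda$. Next I would exploit the wave-phase factorization: since
\[
\phi(\xi,\eta;x,t)=x\!\cdot\!(\xi+\eta)+t(|\xi|+|\eta|)-t|\xi+\eta|,
\]
the operator $\tilde I(t,\cdot)$ equals the image under the unitary modulation $e^{-it|D|}$ of a smooth $\tilde a_\lambda$-filtered bilinear product of $S(t)f$ and $S(t)g$. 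In particular, $\|\tilde I(t,\cdot)\|_{L^2_x}$ is controlled by the $L^2_x$ norm of a bilinear wave flow with smooth amplitude, to which Strichartz-type tools are directly applicable.

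The second step is to extract the temporal $L^2_t$ factor via Lemma~\ref{lem:A-L6}. Decomposing each flow into caps $\{\Theta_{\lambda,\theta}\}$, for each admissible pair $(\theta,\theta')$ the local estimate \eqref{eq:A-L6-wave} gives $\|S(\cdot)P_{\lambda,\theta}f\|_{L^6_{t,x}(I_\lambda)}\lesssim \lambda^{-1/2+\varepsilon}\|P_{\lambda,\theta}f\|_{L^2_x}$, and similarly for $g$. Combining one such Strichartz factor with H\"older in $(t,x)$ on the bounded window $I_\lambda$ (for example $L^6_{t,x}\!\cdot\! L^6_{t,x}\hookrightarrow L^3_{t,x}$ followed by the embedding $L^3_t(I_\lambda)\hookrightarrow L^2_t(I_\lambda)$ with cost $|I_\lambda|^{1/6}=\lambda^{-1/12}$, or equivalently a direct bilinear $TT^*$ on the time integral as in \S\ref{subapp:A-proof}), and then summing over admissible cap pairs using finite overlap and the angular $\ell^2$ almost-orthogonality, yields the desired $\lambda^{-1/2+\varepsilon}$ in $L^2_t(I_\lambda;L^2_x)$. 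Multiplied by the $\lambda^{-1}$ from step one this produces $\lambda^{-3/2+\varepsilon}$, matching \eqref{eq:D-ibp-phase}.

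\textbf{Main obstacle.} The delicate point is to ensure that the two gains remain \emph{independent} and do not cannibalize each other through a lossy Cauchy--Schwarz over the entire frequency support (whose volume is $\sim\lambda^6$ in $(\xi,\eta)$). The safeguard is to preserve the cap localization of radius $\lambda^{-1/2}$ at every stage: after the angular IBPs, each bilinear piece still respects the tile/packet geometry of \S\ref{subsec:wp-construction}, and the Strichartz factor is applied \emph{per cap}, so the frequency support is controlled at the anisotropic scale and the two exponents add. This is precisely the ``IBP$\times$3'' bookkeeping (two angular IBPs plus one independent temporal $TT^*$) emphasized in the introduction; the small loss $\varepsilon$ is inherited entirely from \eqref{eq:A-L6-wave} and is harmless in the final balance.
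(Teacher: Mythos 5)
Your skeleton is the same as the paper's: Corollary~\ref{cor:D-ibp32} is justified there exactly by multiplying the $\lambda^{-1}$ of Lemma~\ref{lem:D-ibp2} with the $\lambda^{-1/2+\varepsilon}$ of the local anisotropic Strichartz estimate \eqref{eq:A-L6-wave} on a window of length $\lambda^{-1/2}$ (the heat$\to$wave bridge you invoke is superfluous here, since $I$ in \eqref{eq:D-ibp-I} is already written with the wave phase). So the issue is not the approach but the one concrete mechanism you offer for actually producing the $L^2_t(I_\lambda;L^2_x)$ norm.

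That mechanism does not work as written. Two cap-wise applications of \eqref{eq:A-L6-wave} (equivalently Lemma~\ref{lem:bilinear-L3}) control the bilinear flow in $L^3_{t,x}(I_\lambda\times\mathbb{R}^3)$; your step ``$L^3_t(I_\lambda)\hookrightarrow L^2_t(I_\lambda)$ at cost $|I_\lambda|^{1/6}$'' handles only the time variable and silently leaves the spatial norm at $L^3_x$, and on $\mathbb{R}^3$ there is no embedding $L^3_x\hookrightarrow L^2_x$ (Bernstein cannot repair this, since lowering the Lebesgue exponent needs spatial, not frequency, localization). Note also that the cheap fallback --- using \eqref{eq:two-ibp} uniformly in $t$ and paying $|I_\lambda|^{1/2}$ --- yields only $\lambda^{-5/4}$, short of \eqref{eq:D-ibp-phase} by $\lambda^{-1/4}$, so the temporal factor $\lambda^{-1/2}$ genuinely requires a bilinear $L^2_{t,x}$-type argument carried out on the packet/tube localization (finite overlap of the cylinders \eqref{eq:wp-cylinder}, Cauchy--Schwarz over packets, and an expansion of the amplitude $a_\lambda(\xi,\eta)$, which is not a tensor product, before you can speak of ``$S(t)f$ times $S(t)g$''). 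You gesture at this (``direct bilinear $TT^\ast$'', preserving cap localization) but do not execute it, and it is precisely where the risk of double-counting the $\|\widehat f\|_{L^2}\|\widehat g\|_{L^2}$ content --- once in the IBP bound, once in the Strichartz bound --- must be resolved. To be fair, the paper states the corollary at the same level of assertion; but as a proof, your written route fails at the $L^3_x\to L^2_x$ step and leaves the decisive bilinear step unproved.
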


\begin{remark}[On “three integrations by parts’’]\label{rem:D-ibp-meaning}
In the organization of the proof used here, “IBP$\times3$’’ is to be understood as
two angular IBPs in $(\rho_1,\rho_2)$ (giving $\lambda^{-1}$) plus an independent temporal
gain $\lambda^{-1/2}$ via $TT^\ast$ on a window $|I_\lambda|=\lambda^{-1/2}$
(see the dictionary \S\ref{app:dictionary}, items E.2–E.4). That is, the third “IBP’’ is not a literal IBP
in time under the frequency integral, but temporal localization and a $TT^\ast$ estimate.
\end{remark}

\subsection{Bilinear decoupling on the phase surface}\label{subapp:D-dec}

We work in the wide angular region $\angle(\xi,\eta)\gtrsim \lambda^{-1/2}$ with $|\xi|\sim|\eta|\sim\lambda\gg1$. Restrict the frequency space to the \emph{level surface of the phase}
\begin{equation}\label{eq:D-surface}
  \Sigma_c:=\bigl\{(\xi,\eta)\in\mathbb{R}^3\times\mathbb{R}^3:\ |\xi|\sim|\eta|\sim\lambda,\ \omega(\xi,\eta)=c\bigr\},\qquad
  \omega(\xi,\eta):=|\xi|+|\eta|-|\xi+\eta|.
\end{equation}
The passage to integration over $\Sigma_c$ is performed by the coarea formula; the weight $|\nabla\omega|^{-1}$ is uniformly controlled in the wide region (see the dictionary, item E.13) and absorbed into the amplitude.

Define the bilinear operator on $\Sigma_c$:
\begin{equation}\label{eq:D-E-op}
  \mathcal{E}_\lambda(f,g)(t,x)
  :=\iint_{\Sigma_c} e^{\,i\{x\cdot(\xi+\eta)+t\,\omega(\xi,\eta)\}}\,f(\xi)\,g(\eta)\,d\sigma_{\Sigma_c}(\xi,\eta),
\end{equation}
where $d\sigma_{\Sigma_c}$ is the induced measure.

\begin{lemma}[$\varepsilon$-free decoupling on $\Sigma_c$]\label{lem:D-dec-surface}
There exists an exponent $\delta(6)\in\{\tfrac14,\ \tfrac16+o(1)\}$ with the following property. If $\widehat f,\widehat g$ are localized in caps of radius $\lambda^{-1/2}$ and $\angle(\xi,\eta)\gtrsim \lambda^{-1/2}$, then
\begin{equation}\label{eq:D-dec-L6}
  \|\mathcal{E}_\lambda(f,g)\|_{L^6_{t,x}(\mathbb{R}\times\mathbb{R}^3)}
  \ \lesssim\ \lambda^{-\delta(6)}\,\|f\|_{L^2_\xi}\,\|g\|_{L^2_\eta}.
\end{equation}
Under uniform curvature one has $\delta(6)=\tfrac14$; when the minimal principal curvature satisfies $\kappa_{\min}\sim\lambda^{-1}$ one allows $\delta(6)=\tfrac16+o(1)$ (the $\varepsilon$-free decoupling result for rank~3 surfaces). See also \cite{GuthIliopoulouYang2024}.
\end{lemma}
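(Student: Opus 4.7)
\medskip
\noindent\textbf{Proof proposal.}
The plan is to reduce \eqref{eq:D-dec-L6} to a normalized bilinear restriction/decoupling statement at unit frequency scale and then invoke the known $\varepsilon$-free decoupling for rank-$3$ surfaces. First I rescale: writing $\xi=\lambda\xi'$, $\eta=\lambda\eta'$, $t=\lambda^{-1}t'$, $x=\lambda^{-1}x'$, the surface $\Sigma_c$ becomes $\{|\xi'|\sim|\eta'|\sim 1,\ \omega(\xi',\eta')=c/\lambda\}$, the input caps of radius $\lambda^{-1/2}$ become caps of radius $R^{-1}$ with $R:=\lambda^{1/2}$, and the coarea weight $|\nabla\omega|^{-1}$ is uniformly bounded on the wide region (item E.13) and hence absorbed into the amplitude. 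All $\lambda$-powers are tracked through the Jacobians of the change of variables and the measure $d\sigma_{\Sigma_c}$, so the final exponent in \eqref{eq:D-dec-L6} reads off directly from the exponent in the rescaled estimate.

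\medskip
\noindent\textbf{Verifying rank $3$ and transversality.}
In the normalized coordinates, the hypothesis that $\Sigma_c$ is of rank $3$ is exactly the content of Lemma~\ref{lem:detA}: after the anisotropic conjugation $S_\lambda=\mathrm{diag}(\lambda^{1/2},\lambda^{1/2},1)$ the effective $3\times 3$ minor of the phase Hessian in the frame $(\rho_1,\rho_2,\sigma)$ satisfies $\det A_e\gtrsim 1$, so the Gauss map of $\Sigma_c$ has maximal rank $3$ on the wide region. The separation $\angle(\xi,\eta)\gtrsim\lambda^{-1/2}$ translates into an $\gtrsim R^{-1}$ angular separation of the two input caps on $\Sigma'$, giving the transversality of normal cones at scale $R^{-1}$ required by the bilinear version of decoupling.

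\medskip
\noindent\textbf{Two regimes.}
In the uniformly nondegenerate regime, a direct adaptation of Bourgain--Demeter $\ell^2$-decoupling at $L^6$ to rank-$3$ surfaces in $\mathbb{R}^6$, combined with the bilinear transversality above, yields $\delta(6)=\tfrac14+\varepsilon$; the $\varepsilon$-free benchmark $\delta(6)=\tfrac14$ follows from a standard $\varepsilon$-removal (Bourgain--Guth-type) argument since the wide-region estimate is used only after summing in $\ell^2$ over caps of radius $\lambda^{-1/2}$, where no $\lambda^\varepsilon$ slack is available. In the degenerate curvature regime $\kappa_{\min}\sim\lambda^{-1}$ the uniform hypothesis fails, and I invoke instead the $\varepsilon$-free decoupling for rank-$3$ surfaces with controlled minimal curvature of \cite{GuthIliopoulouYang2024}, which delivers $\delta(6)=\tfrac16+o(1)$ uniformly in the curvature parameter. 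Undoing the rescaling gives \eqref{eq:D-dec-L6} in both cases.

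\medskip
\noindent\textbf{Main obstacle.}
The hardest step is ensuring the $\varepsilon$-free character of the bound in the degenerate-curvature regime: a naive application of decoupling introduces $R^{\varepsilon}=\lambda^{\varepsilon/2}$ losses that, when combined with the cap summation in \S\ref{subapp:B-tiles} and the dyadic summation in \S\ref{sec:glue}, could dress the wide-region estimate with a $\log\lambda$ factor — exactly the loss we are trying to avoid globally. The $\varepsilon$-free formulation of \cite{GuthIliopoulouYang2024} is what rules this out; a secondary technical point is the smooth truncation of the coarea weight $|\nabla\omega|^{-1}$ to the wide region without introducing commutator losses, which is routine given the amplitude class implicit in \eqref{eq:D-E-op}. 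In any event, the combined wide-region decay $\lambda^{-3/2}\cdot\lambda^{-1/2}\cdot\lambda^{-\delta(6)}\in\{\lambda^{-11/4},\lambda^{-8/3+o(1)}\}$ is strictly faster than $\lambda^{-1}$ for both admissible values of $\delta(6)$, so the precise exponent does not influence the dyadic outcome, which is ultimately controlled by the narrow region of App.~\ref{app:narrow}.
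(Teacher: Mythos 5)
Your argument coincides with the paper's own proof sketch: renormalize to unit scale, verify rank~3 nondegeneracy via the effective Hessian of Lemma~\ref{lem:detA}, get transversality from the $\gtrsim\lambda^{-1/2}$ angular separation, absorb the coarea weight $|\nabla\omega|^{-1}$ into the amplitude, invoke the $\varepsilon$-free rank-3 bilinear decoupling of \cite{GuthIliopoulouYang2024} (with the uniform-curvature case giving the benchmark $\delta(6)=\tfrac14$), and rescale back — the same route, at essentially the same level of detail. One cosmetic slip in your closing aside: the wide-region tally should be $\lambda^{-3/2}\cdot\lambda^{-1}\cdot\lambda^{-\delta(6)}$ (the two local Strichartz factors contribute $\lambda^{-1}$, not $\lambda^{-1/2}$), which is what yields the quoted values $\lambda^{-11/4}$ and $\lambda^{-8/3+o(1)}$; this does not affect the proof of the lemma itself.
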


\begin{proof}[Idea]
By the anisotropic renormalization $S_\lambda=\mathrm{diag}(\lambda^{-1/2},\lambda^{-1/2},1)$ we bring the caps to unit scale and align the normals of $\Sigma_c$; away from the diagonal the surface has rank~3. One applies $\varepsilon$-free bilinear decoupling for rank~3 surfaces, with the dependence on $\kappa_{\min}$ absorbed into the exponent $\delta(6)$. Returning to the original scale yields \eqref{eq:D-dec-L6}.
\end{proof}

Combining \eqref{eq:D-dec-L6} with the tiled $\ell^2$ reduction from \S\ref{subapp:B-tiles} we obtain for the localized operator $E_\lambda$ on the window $I_\lambda(t_0)=[t_0,t_0+\lambda^{-1/2}]$ an estimate of the form
\begin{equation}\label{eq:D-dec-final}
  \|E_\lambda(f,g)\|_{L^6_{t,x}(I_\lambda(t_0)\times\mathbb{R}^3)}
  \ \lesssim\ \lambda^{-1/2}\,\lambda^{-\delta(6)}\,\|f\|_{L^2_x}\,\|g\|_{L^2_x},
\end{equation}
where the factor $\lambda^{-1/2}$ is the tiled $\ell^2$ gain (see \S\ref{subapp:B-tiles}), and $\lambda^{-\delta(6)}$ is the geometric gain from the restriction to $\Sigma_c$ and decoupling (Lemma~\ref{lem:D-dec-surface}). In particular, at the benchmark $\delta(6)=\tfrac14$ we have $\lambda^{-3/4}$, and at $\delta(6)=\tfrac16+o(1)$ we have $\lambda^{-2/3+o(1)}$. In both cases the wide contribution decays faster than $\lambda^{-1}$ after accounting for the phase block \S\ref{subapp:D-ibp}, so the dyadic outcome is controlled by the narrow region (see \S\ref{app:narrow}).

\subsection{Compensation from the narrow region}\label{subapp:D-narrow}

Recall the definition of the narrow-zone output (see \eqref{eq:Rnar-def}) and the $\dot H^{-1}$ passage estimate from \S\ref{subapp:C-toHminus}:
\begin{equation}\label{eq:D-nar-point}
  \bigl\|R^{\mathrm{nar}}_\lambda(t)\bigr\|_{\dot H^{-1}}
  \ \lesssim\ \lambda^{-1}\,\|u_\lambda(t)\|_{\dot H^{1/2}}^{2}
  \ \simeq\ \lambda^{-3/2}\,\|u_\lambda(t)\|_{\dot H^{1/2}}\,\|u_\lambda(t)\|_{\dot H^{1}},
\end{equation}
where in the second equivalence we used the dyadic relation $\|u_\lambda\|_{\dot H^{1}}\simeq \lambda^{1/2}\|u_\lambda\|_{\dot H^{1/2}}$.

\begin{lemma}[Narrow region on a short window]\label{lem:D-nar-window}
For any $I_\lambda(t_0)=[t_0,t_0+\lambda^{-1/2}]$ one has
\begin{equation}\label{eq:D-nar-L2t}
  \bigl\|R^{\mathrm{nar}}_\lambda\bigr\|_{L^2_t(I_\lambda(t_0);\dot H^{-1}_x)}
  \ \lesssim\ \lambda^{-7/4}\,X_{1/2}\,X_{1},
\end{equation}
where $X_\sigma:=\sup_{t\in[0,T]}\|u(t)\|_{\dot H^\sigma}$.
\end{lemma}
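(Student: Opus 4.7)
The plan is to derive \eqref{eq:D-nar-L2t} as an immediate consequence of the pointwise estimate \eqref{eq:D-nar-point} established in Appendix~\ref{app:narrow}, combined with a trivial Hölder step in time adapted to the length of the Strichartz window $|I_\lambda(t_0)|=\lambda^{-1/2}$. Since \eqref{eq:D-nar-point} is already a uniform-in-$t$ bound and the right-hand side depends on $t$ only through $\|u_\lambda(t)\|_{\dot H^{1/2}}$ and $\|u_\lambda(t)\|_{\dot H^{1}}$, both of which are dominated by the global suprema $X_{1/2}$ and $X_{1}$ defined in \eqref{eq:Xsigma}, the only remaining task is to pay the square root of the window length.

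First, I would record the pointwise bound from \eqref{eq:D-nar-point} in its mixed form, namely
\[
  \bigl\|R^{\mathrm{nar}}_\lambda(t)\bigr\|_{\dot H^{-1}}
  \;\lesssim\;
  \lambda^{-3/2}\,\|u_\lambda(t)\|_{\dot H^{1/2}}\,\|u_\lambda(t)\|_{\dot H^{1}},
  \qquad t\in[0,T],
\]
and observe that the implied constant is independent of $t$ and of the window index $t_0$. Taking the pointwise supremum inside the window $I_\lambda(t_0)\subset[0,T]$ and bounding $\|u_\lambda(t)\|_{\dot H^\sigma}\le \|u(t)\|_{\dot H^\sigma}\le X_\sigma$ for $\sigma\in\{\tfrac12,1\}$ (using the boundedness of $P_\lambda$ on $\dot H^\sigma$), I obtain
\[
  \sup_{t\in I_\lambda(t_0)}\bigl\|R^{\mathrm{nar}}_\lambda(t)\bigr\|_{\dot H^{-1}}
  \;\lesssim\; \lambda^{-3/2}\,X_{1/2}\,X_{1}.
\]

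Then I would apply Hölder's inequality in the time variable on the interval $I_\lambda(t_0)$ of length $\lambda^{-1/2}$:
\[
  \bigl\|R^{\mathrm{nar}}_\lambda\bigr\|_{L^2_t(I_\lambda(t_0);\dot H^{-1}_x)}
  \;\le\; |I_\lambda(t_0)|^{1/2}\,\sup_{t\in I_\lambda(t_0)}\bigl\|R^{\mathrm{nar}}_\lambda(t)\bigr\|_{\dot H^{-1}}
  \;=\; \lambda^{-1/4}\,\sup_{t\in I_\lambda(t_0)}\bigl\|R^{\mathrm{nar}}_\lambda(t)\bigr\|_{\dot H^{-1}}.
\]
Combining the two previous displays yields the claimed $\lambda^{-1/4}\cdot\lambda^{-3/2}=\lambda^{-7/4}$ factor, which is exactly \eqref{eq:D-nar-L2t}.

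There is no substantive obstacle at this step: the heavy lifting (null-form suppression, divergence transfer, the direct passage to $\dot H^{-1}$ avoiding the spurious $\lambda^\delta$ factor) was already done in \S\ref{subapp:C-null}–\S\ref{subapp:C-toHminus} to produce \eqref{eq:D-nar-point}. The only thing worth double-checking is that the pointwise constant in \eqref{eq:D-nar-point} is genuinely uniform in $t$ and independent of $t_0$, so that the Hölder trick legitimately produces a window-independent bound; this uniformity is transparent from the derivation in Appendix~\ref{app:narrow}, which uses only Bernstein inequalities on the annulus $|\xi|\sim\lambda$ and the frequency-side null-form identity, neither of which involves $t$ or $t_0$. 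The resulting $\lambda^{-7/4}$ factor will be used in \S\ref{sec:glue} when the narrow contribution is inserted into the $L^2_t$-based patching on Strichartz windows, where it is strictly below the target exponent $\lambda^{-1}\cdot |I_\lambda|^{1/2}=\lambda^{-5/4}$ needed to match the wide-region block.
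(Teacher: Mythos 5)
Your argument is correct and is essentially identical to the paper's own proof: both take the pointwise-in-time bound \eqref{eq:D-nar-point} in its mixed form, dominate the dyadic norms by $X_{1/2}$ and $X_{1}$, and pay the factor $|I_\lambda(t_0)|^{1/2}=\lambda^{-1/4}$ via Hölder in time to obtain $\lambda^{-1/4}\cdot\lambda^{-3/2}=\lambda^{-7/4}$. No gaps.
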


\begin{proof}
From \eqref{eq:D-nar-point} and the inequality $\|f\|_{L^2_t(I)}\le |I|^{1/2}\|f\|_{L^\infty_t(I)}$ we get
\[
  \|R^{\mathrm{nar}}_\lambda\|_{L^2_t(I_\lambda;\dot H^{-1})}
  \ \lesssim\ |I_\lambda|^{1/2}\,\lambda^{-3/2}\,X_{1/2}\,X_{1}
  \ =\ \lambda^{-1/4}\,\lambda^{-3/2}\,X_{1/2}\,X_{1},
\]
which gives \eqref{eq:D-nar-L2t}.
\end{proof}

\begin{corollary}[Stability under the heat$\to$wave replacement]\label{cor:D-nar-bridge}
Let $F_\lambda(t):=P_{\le \lambda^{-\delta}}\nabla\!\cdot(u_\lambda\otimes u_\lambda)(t)$, $\tfrac12<\delta<\tfrac34$. Then on $I_\lambda(t_0)$
\begin{equation}\label{eq:D-nar-bridge}
  \Bigl\|\,|\nabla|^{-1}P_\lambda\!\int_{t_0}^t\!\bigl(e^{(t-\tau)\Delta}-S(t-\tau)\bigr)P_\lambda F_\lambda(\tau)\,d\tau\Bigr\|_{L^2_t(I_\lambda;L^2_x)}
  \ \lesssim\ \lambda^{-3/2}\,\|F_\lambda\|_{L^2_t(I_\lambda;L^2_x)}.
\end{equation}
In particular, the contribution of the difference \eqref{eq:D-nar-bridge} is of order strictly below the critical exponent $\lambda^{-1}$ and does not affect the narrow-region balance from \eqref{eq:D-nar-point}–\eqref{eq:D-nar-L2t}.
\end{corollary}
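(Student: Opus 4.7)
The plan is to obtain \eqref{eq:D-nar-bridge} as an immediate specialization of Lemma~\ref{lem:D-bridge}: the bridge estimate \eqref{eq:D-bridge} holds for every $F\in L^2_t(I_\lambda(t_0);L^2_x)$, and its right-hand side involves only the norm of $F$ without invoking its algebraic structure, so the substitution $F\leftarrow F_\lambda$ is legitimate as soon as the membership $F_\lambda\in L^2_t(I_\lambda(t_0);L^2_x)$ is confirmed. No new oscillatory or phase analysis is required; the work reduces to verifying the hypothesis and then reading off the structural constants that were already built into the bridge estimate.

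First I would check the input hypothesis. Applying Bernstein to the outer low-frequency projector $P_{\le\lambda^{-\delta}}$ and transferring the divergence gives $\|F_\lambda(t)\|_{L^2_x}\lesssim\lambda^{1-\delta}\,\|u_\lambda(t)\|_{L^\infty_x}\,\|u_\lambda(t)\|_{L^2_x}$, which is controlled on the window by the working norms $X_{1/2}$ and $X_1$ via a standard bilinear Bernstein bound on the annulus $|\xi|\sim\lambda$; integration in $t$ over $I_\lambda(t_0)$ of length $\lambda^{-1/2}$ yields the $L^2_t$ membership. Plugging $F_\lambda$ into \eqref{eq:D-bridge} then produces \eqref{eq:D-nar-bridge} with the same structural constant: the factor $\lambda^{-3/2}$ arises from composing the Schur bound $\lambda^{-1/2}$ for the temporal kernel $|e^{-s\lambda^2}-e^{is\lambda}|$ on $[0,\lambda^{-1/2}]$ with the operator norm $\lambda^{-1}$ of $|\nabla|^{-1}P_\lambda$, exactly as in the proof of Lemma~\ref{lem:D-bridge}.

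The interpretive content of the corollary is the comparison of $\lambda^{-3/2}$ with the narrow-region critical exponent $\lambda^{-1}$: Lemma~\ref{lem:D-nar-window} gives the level $\lambda^{-7/4}$ in $L^2_t(I_\lambda;\dot H^{-1})$, corresponding to the pointwise exponent $\lambda^{-3/2}$ of \eqref{eq:D-nar-point}, with dyadic output $\lambda^{-1}$ after summation. Since $3/2>1$, the heat$\to$wave swap remainder sits strictly below the narrow-region threshold and is absorbed when assembling the pieces in Section~\ref{sec:glue}. The one point requiring care is notational rather than analytic: the composition $|\nabla|^{-1}P_\lambda\circ P_\lambda$ in \eqref{eq:D-nar-bridge} should be read as the dyadic bookkeeping inherited from the high-high input pair in \eqref{eq:Rnar-def}, not as a genuinely vanishing composition with the low-frequency cutoff built into $F_\lambda$; once this reading is fixed, the remaining argument is a single invocation of the bridge lemma and no substantive obstacle is expected.
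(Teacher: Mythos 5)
Your proposal matches the paper's own proof: Corollary~\ref{cor:D-nar-bridge} is obtained there exactly as you do, by a direct application of the bridge estimate \eqref{eq:D-bridge} of Lemma~\ref{lem:D-bridge} with $F=F_\lambda$, the finiteness of $\|F_\lambda\|_{L^2_t(I_\lambda;L^2_x)}$ being supplied by the narrow-zone bounds of \S\ref{subapp:C-null}--\S\ref{subapp:C-toHminus}. Your closing caution about the literal composition $|\nabla|^{-1}P_\lambda\circ P_\lambda$ acting on the low-frequency-supported $F_\lambda$ is a fair observation that the paper does not spell out, but it does not change the argument, which is the same single invocation of the bridge lemma.
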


\begin{proof}
This is a direct application of the bridge \eqref{eq:D-bridge} to the function $F_\lambda$; the bound on the right-hand side follows from \S\ref{subapp:C-null}–\S\ref{subapp:C-toHminus}.
\end{proof}

\paragraph{Summary.}
The narrow-region contribution is controlled by the pointwise-in-time bound \eqref{eq:D-nar-point} and on windows $|I_\lambda|=\lambda^{-1/2}$ satisfies \eqref{eq:D-nar-L2t}. Replacing the heat phase by the wave phase gives a remainder $O(\lambda^{-3/2})$ by \eqref{eq:D-nar-bridge}, which is subcritical. Together with the wide-region steps \S\ref{subapp:D-ibp}–\S\ref{subapp:D-dec} this completes the control on one dyad; the global patching is carried out in \S\ref{sec:glue}.

\newpage

\section{Normalizations, dictionary, and how to read the main text}\label{app:dictionary}

\subsection{What exactly $\det A$ means (for \S\ref{sec:phase})}\label{subapp:E-detA}

In the phase–geometric part, $\det A$ always denotes the determinant of the \emph{effective} $3\times3$ minor of the Hessian of the phase
\[
  \omega(\xi,\eta)=|\xi|+|\eta|-|\xi+\eta|
\]
in the basis adapted to the plane $\mathrm{span}\{\xi,\eta\}$: the two angular axes $\rho_1,\rho_2$ and the longitudinal–difference axis $\sigma$. Denote
\[
  A_{\mathrm{eff}}(\xi,\eta):=\bigl[\partial^2_{v_i v_j}\omega(\xi,\eta)\bigr]_{v_i,v_j\in\{\rho_1,\rho_2,\sigma\}},\qquad
  S_\lambda:=\mathrm{diag}(\lambda^{1/2},\lambda^{1/2},1),\qquad
  A_e:=S_\lambda\,A_{\mathrm{eff}}\,S_\lambda.
\]
In the wide region ($|\xi|\sim|\eta|\sim\lambda\gg1$, $\angle(\xi,\eta)\gtrsim\lambda^{-1/2}$) the following formulations of nondegeneracy are equivalent:
\begin{equation}\label{eq:E-detA}
  \det A_e(\xi,\eta)\ \gtrsim\ 1
  \quad\Longleftrightarrow\quad
  \det A_{\mathrm{eff}}(\xi,\eta)\ \gtrsim\ \lambda^{-2}.
\end{equation}
It is precisely \eqref{eq:E-detA} that is used in the two angular integrations by parts (see \S\ref{sec:phase}, \S\ref{sec:wp-anis}) and yields the frequency multiplier $\lambda^{-1}$; in combination with temporal localization on windows of length $\lambda^{-1/2}$ (via $TT^\ast$) one obtains the total phase–time gain $\lambda^{-3/2}$.

\subsection{Phase–time gain $\lambda^{-3/2}$: how it is composed}\label{subapp:E-ibp32}

In the wide angular region ($|\xi|\sim|\eta|\sim\lambda\gg1$, $\angle(\xi,\eta)\gtrsim\lambda^{-1/2}$) the total gain $\lambda^{-3/2}$ is obtained as the product of two independent factors:

\begin{enumerate}[label=(\roman*)]
\item \textbf{Two angular IBPs.} Integration by parts along the adapted directions $\rho_1,\rho_2$ yields a factor $\lambda^{-1}$ thanks to the nondegeneracy of the effective $3\times3$ minor of the phase Hessian (see \S\ref{subapp:E-detA} and the phase–geometric part \S\ref{sec:phase}). Nondegeneracy is equivalently formulated via the renormalized minor $A_e$ and provides a determinant estimate sufficient for two frequency IBPs.
\item \textbf{Temporal localization via $TT^\ast$.} On windows $I_\lambda=[t_0,t_0+\lambda^{-1/2}]$ the local anisotropic Strichartz estimate for the flow (obtained by the $TT^\ast$ method) gives an additional factor $\lambda^{-1/2}$; see \S\ref{subapp:D-ibp} and the heat$\to$wave bridge \S\ref{subapp:D-bridge}, as well as App.~\ref{app:astrichartz}.
\end{enumerate}

In total,
\begin{equation}\label{eq:E-ibp-sum}
  \lambda^{-1}\ \cdot\ \lambda^{-1/2}\ =\ \lambda^{-3/2},
\end{equation}
which is the phase–time gain used in the wide region. Note that the third factor does \emph{not} come from a third frequency IBP: it is extracted precisely from temporal localization on windows of length $\lambda^{-1/2}$ via the $TT^\ast$ argument (see also the explanations in \S\ref{subapp:D-ibp} and \S\ref{subapp:D-bridge}).

\subsection{“Multiplying exponents’’ — a mnemonic, not a proof}\label{subapp:E-mnemonic}

Short notations such as
\[
  \lambda^{-3/2}\cdot\lambda^{-1}\cdot\lambda^{-\delta(6)}
\]
serve solely for scale bookkeeping and pertain to the \emph{wide} region (see~\S\ref{subapp:D-ibp}, \S\ref{subapp:D-dec}). In the actual proof the wide and narrow regions are estimated separately, after which the slowest-decaying contribution is chosen at a fixed dyad~$\lambda$:
\begin{equation}\label{eq:E-min-lambda}
  \bigl\|R_\lambda(u)\bigr\|_{\dot H^{-1}}
  \ \lesssim\ \min\!\bigl\{\lambda^{-11/4},\ \lambda^{-1}\bigr\}\,\|u\|_{\dot H^{1/2}}\|u\|_{\dot H^{1}},
\end{equation}
where $\lambda^{-11/4}$ (or $\lambda^{-8/3+o(1)}$ under weak curvature, see~\S\ref{subapp:D-dec}) is the wide region, and $\lambda^{-1}$ is the narrow region (see~\S\ref{subapp:C-toHminus}–\S\ref{subapp:C-sum-lambda}). The outcome on one dyad is determined by the right bracket, i.e. the narrow region.

The summation over frequencies is performed after taking the minimum:
\begin{equation}\label{eq:E-min-sum}
  \sum_{\lambda\gg1}\bigl\|R_\lambda(u)\bigr\|_{\dot H^{-1}}
  \ \lesssim\ \sum_{\lambda\gg1}\lambda^{-1}\,\|u\|_{\dot H^{1/2}}\|u\|_{\dot H^{1}}
  \ \lesssim\ \|u\|_{X_{1/2}}\|u\|_{X_{1}},
\end{equation}
as in \S\ref{sec:glue}. Therefore, “multiplication of exponents’’ should be read as a convenient mnemonic for the wide region, not as literal equalities applicable to the full sum.

\subsection{Where IBP is performed in frequency and where in time}\label{subapp:E-ibp-split}

In the wide angular region ($|\xi|\sim|\eta|\sim\lambda\gg1$, $\angle(\xi,\eta)\gtrsim\lambda^{-1/2}$) the proof uses \emph{two} integrations by parts in frequency and \emph{separately} temporal localization. The precise roles are as follows.

\begin{itemize}
\item \textbf{Frequency IBPs.} Integration by parts is performed along the two angular directions $\rho_1,\rho_2$ adapted to the plane $\mathrm{span}\{\xi,\eta\}$; see the phase–geometric part \S\ref{sec:phase}. Introducing the lowering operators
\[
L_j:=\frac{1}{i\,(\rho_j\!\cdot\!\nabla\phi)}\,\rho_j\!\cdot\!\nabla_{(\xi,\eta)},\qquad j=1,2,\quad L_j e^{i\phi}=e^{i\phi},
\]
and using the nondegeneracy of the effective $3\times3$ minor of the phase Hessian (equivalences \eqref{eq:E-detA}), we obtain the total frequency gain $\lambda^{-1}$ after two angular IBPs; cf. \S\ref{subapp:E-detA}.
\item \textbf{Temporal part.} The gain $\lambda^{-1/2}$ is extracted not by integrating in time under the frequency integral, but by the local anisotropic Strichartz estimate via $TT^\ast$ on windows $I_\lambda$ of length $\lambda^{-1/2}$; see \S\ref{subapp:D-ibp}, \S\ref{subapp:D-bridge} and App.~\ref{app:astrichartz}.
\end{itemize}

Thus the total phase–time multiplier in the wide region equals
\[
\lambda^{-1}\cdot\lambda^{-1/2}=\lambda^{-3/2},
\]
as recorded in \eqref{eq:E-ibp-sum}. Hence the notation “IBP$\times3$’’ in the main text should be read as “two angular IBPs in frequency plus an independent temporal $TT^\ast$ block on a window $|I_\lambda|=\lambda^{-1/2}$’’, rather than threefold frequency IBP or literal IBP in time inside the frequency integral.

\subsection{Wide region: crude and refined exponents, and how to read §6}\label{subapp:E-wide-degrees}

In the wide angular region the total gain is composed of the phase–geometric block $\lambda^{-3/2}$ (two angular IBPs plus temporal $TT^\ast$, see~\S\ref{subapp:D-ibp}), two local Strichartz factors $\lambda^{-1}$ (one per flow, see \S\ref{app:astrichartz}) and the decoupling exponent $\lambda^{-\delta(6)}$ on the phase surface (see~\S\ref{subapp:D-dec}). As a result,
\begin{equation}\label{eq:E-wide-cases}
  \lambda^{-3/2}\cdot \lambda^{-1}\cdot \lambda^{-\delta(6)}
  \ =\
  \begin{cases}
    \lambda^{-11/4}, & \delta(6)=\tfrac14\ \text{(benchmark under uniform curvature)},\\[2mm]
    \lambda^{-8/3+o(1)}, & \delta(6)=\tfrac16+o(1)\ \text{(weak minimal curvature)}.
  \end{cases}
\end{equation}
Both exponents decay faster than $\lambda^{-1}$, hence the outcome on a fixed dyad is determined by the narrow region (see~\S\ref{subapp:C-toHminus}–\S\ref{subapp:C-sum-lambda}).

\medskip
\noindent\textbf{How to read §6.}
In \S6, for guidance one may encounter the more \emph{crude} estimate for the wide region $\lambda^{-7/4}$ as an admissible upper guideline; where it matters for checking exponents, the \emph{refined} form \eqref{eq:E-wide-cases} is used (i.e. $\lambda^{-11/4}$ or $\lambda^{-8/3+o(1)}$). In any case, the wide region remains strictly smaller than $\lambda^{-1}$ and thus does not control the final minimum on a single dyad; it is the narrow region that yields exactly $\lambda^{-1}$ in the $\dot H^{-1}$ norm (see \S\ref{subapp:C-toHminus}, \S\ref{subapp:C-sum-lambda}).

\subsection{On the exponent $\delta(6)$: benchmark and “realistic’’ regime}\label{subapp:E-delta6}

By $\delta(6)$ we mean the decoupling gain exponent in the bilinear transition
$L^2_\xi\times L^2_\eta\to L^6_{t,x}$ on a rank~3 phase surface (see~\S\ref{subapp:D-dec}).
In the wide angular region ($|\xi|\sim|\eta|\sim\lambda\gg1$, $\angle(\xi,\eta)\gtrsim\lambda^{-1/2}$) two compatible scales are used:
\[
\|\mathcal{E}_\lambda(f,g)\|_{L^6_{t,x}}
\ \lesssim\ \lambda^{-\delta(6)}\,\|f\|_{L^2}\,\|g\|_{L^2},
\qquad
\delta(6)\in\Bigl\{\tfrac14,\ \tfrac16+o(1)\Bigr\}.
\]
\begin{itemize}
\item \emph{Uniform-curvature benchmark.} If the principal curvatures of the phase surface are uniformly separated from zero, then $\delta(6)=\tfrac14$; this yields a contribution $\lambda^{-1/4}$ in the wide region (see~\S\ref{subapp:D-dec}).
\item \emph{“Realistic’’ minimal-curvature regime.} When $\kappa_{\min}\sim \lambda^{-1}$ the $\varepsilon$-free decoupling results relax the exponent to $\delta(6)=\tfrac16+o(1)$ (see also \cite{GuthIliopoulouYang2024}).
\end{itemize}

In both cases the total wide-region contribution (phase $\lambda^{-3/2}$ $+$ two local Strichartz factors $\lambda^{-1}$ $+$ decoupling $\lambda^{-\delta(6)}$; see~\S\ref{subapp:E-wide-degrees}) decays \emph{strictly} faster than $\lambda^{-1}$, so the outcome on one dyad is determined by the narrow region (see~\S\ref{subapp:C-toHminus}, \S\ref{subapp:C-sum-lambda}). Therefore, the choice between $\delta(6)=\tfrac14$ and $\tfrac16{+}o(1)$ only affects the margin in the wide region and does not change the final balance of the proof.

\subsection{“Outcome on one dyad’’: why the narrow region wins}\label{subapp:E-one-dyad}

In the wide regime (angle $\gtrsim \lambda^{-1/2}$) the combined phase–time gain and bilinear decoupling give an exponent decaying \emph{strictly} faster than $\lambda^{-1}$ (see \S\ref{subapp:D-ibp}, \S\ref{subapp:D-dec} and \S\ref{subapp:E-wide-degrees}); at the benchmark this is $\lambda^{-11/4}$, and under minimal curvature it is of order $\lambda^{-8/3+o(1)}$. In the narrow regime $|\xi+\eta|\le \lambda^{-\delta}$, by contrast, one obtains \emph{exactly} $\lambda^{-1}$ in the $\dot H^{-1}$ norm thanks to null–form suppression and the low-frequency output (see \S\ref{subapp:C-toHminus}–\S\ref{subapp:C-sum-lambda}).

Thus at a fixed frequency $\lambda$ one takes the slowest-decaying contribution:
\begin{equation}\label{eq:E-one-dyad-min}
  \|R_\lambda(u)\|_{\dot H^{-1}}
  \ \lesssim\ \min\bigl\{\lambda^{-11/4},\,\lambda^{-1}\bigr\}\,
  \|u\|_{\dot H^{1/2}}\ \|u\|_{\dot H^{1}}
  \ \simeq\ \lambda^{-1}\,\|u\|_{\dot H^{1/2}}\ \|u\|_{\dot H^{1}}.
\end{equation}
Summation over dyads $\lambda=2^k$ then contains no logarithmic loss:
\begin{equation}\label{eq:E-one-dyad-sum}
  \sum_{\lambda\gg1}\|R_\lambda(u)\|_{\dot H^{-1}}
  \ \lesssim\ \sum_{\lambda\gg1}\lambda^{-1}\,\|u\|_{\dot H^{1/2}}\ \|u\|_{\dot H^{1}}
  \ \lesssim\ \|u\|_{X_{1/2}}\ \|u\|_{X_{1}},
\end{equation}
which is consistent with the statement of the main result in \S\ref{sec:main}. In this sense the “outcome on one dyad’’ is always determined by the narrow region; the wide region only improves the margin but does not change the leading exponent. We do not claim optimality of constants and adhere to the above organization for the sake of transparency in the balance of exponents.

\subsection{Pressure — order-zero operator on $\dot H^{-1}$}\label{subapp:E-pressure}

For $N\sim\lambda$ write the projection of the pressure gradient:
\[
\nabla p_N
= P_N \nabla \Delta^{-1}\partial_i\partial_j\big(u_i u_j\big)
= P_N R_{ij}\big(u_i u_j\big),
\]
where $R_{ij}$ are compositions of Riesz transforms (order-zero multipliers).
Since $R_{ij}$ are bounded on $\dot H^{-1}$, we have
\begin{equation}\label{eq:E-press-Hm1}
  \|\nabla p_N\|_{\dot H^{-1}}
  \ \lesssim\ \|P_N(u\otimes u)\|_{\dot H^{-1}}.
\end{equation}
In particular, from the resonant estimate for $P_N(u\otimes u)$ it follows that
\begin{equation}\label{eq:E-press-main}
  \|\nabla p_N\|_{\dot H^{-1}}
  \ \lesssim\ N^{-1}\,\|u\|_{\dot H^{1/2}}\,\|u\|_{\dot H^{1}}.
\end{equation}

\paragraph{Remark.}
The oft-encountered phrase “the gradient gives another $N^{-1}$’’ in the context of $\dot H^{-1}$ is mnemonic: the gradient is compensated by the operator $|\nabla|^{-1}$ in the $\dot H^{-1}$ norm and therefore does not improve the frequency exponent. The genuine “order zero’’ nature of the pressure is recorded by \eqref{eq:E-press-Hm1}; the exponent \eqref{eq:E-press-main} coincides with that for the tensor $u\otimes u$ and is consistent with the log-free balance.

\subsection{What is meant by $R_N(u)$ (for §\ref{sec:main})}\label{subapp:E-RN-def}

In the main text, the resonant block \emph{always} means the symmetrized high–high$\to$low component after removing both low–high paraproducts. The precise form is:
\begin{equation}\label{eq:def-RN}
  R_N(u)
  := P_N\!\left[(u\!\cdot\!\nabla)u
     - \sum_{M\le N/8}\Big((P_Mu\!\cdot\!\nabla)P_Nu + (P_Nu\!\cdot\!\nabla)P_Mu\Big)\right],
\end{equation}
where $P_N$ and $P_M$ are smooth dyadic projectors (see §\ref{sec:prelim}), and $N\in 2^{\mathbb Z}$.

\paragraph{Comments on \eqref{eq:def-RN}.}
\begin{itemize}
  \item \emph{Symmetrization.} Subtracting \emph{both} low–high terms ensures that $R_N(u)$ reflects precisely the interactions of the type $|\xi|\sim|\eta|\sim N$ with output $|\xi+\eta|\lesssim N$ (the high–high$\to$low regime), see \S\ref{sec:phase}–\ref{sec:narrow}.
  \item \emph{Divergence-free.} When $\div u=0$ it is convenient to write $(u\!\cdot\!\nabla)u=\nabla\!\cdot(u\otimes u)$; formula \eqref{eq:def-RN} remains equivalent after applying the projection $P_N$.
  \item \emph{Scale and summation.} The estimates for $R_N(u)$ sum over $N$ orthogonally in $\dot H^{-1}$ (see \S\ref{sec:glue}), and the normalizations are consistent with the scaling symmetry \S\ref{sec:scaling}.
\end{itemize}

This reading of $R_N(u)$ is used throughout §\ref{sec:main} and does not affect the balance of exponents: the removed low–high terms are estimated by the same right-hand sides and do not create a logarithmic loss.

\subsection{Sum frequency variable \texorpdfstring{$\tau := \xi + \eta$}{tau := xi + eta}}\label{subapp:E-tau}
In all frequency–phase formulas we introduce the notation
\[
  \tau := \xi+\eta,\qquad \widehat\tau:=\frac{\tau}{|\tau|},\qquad
  P^\perp_\tau := I - \widehat\tau\otimes \widehat\tau .
\]
This allows for a compact radial–transverse decomposition along the $\tau$ axis.
In particular, we use the identity (see \S\ref{sec:phase})
\begin{equation}\label{eq:E-tau-deriv}
  \partial_v\!\left(-\,\frac{\tau}{|\tau|}\right) \ =\ -\,\frac{1}{|\tau|}\,P^\perp_\tau v\qquad
  \text{for any vector }v\in\mathbb{R}^3 .
\end{equation}
Remark. Inside frequency integrals the symbol $t$ is sometimes used as a mnemonic for $\tau$; everywhere it should be understood precisely as $\tau$ (and not the physical time), see the references to \S\ref{sec:phase} and \S\ref{sec:wp-anis}. Formula \eqref{eq:E-tau-deriv} is further applied in the block analysis of the Hessian and in the estimate of the narrow region.

\subsection{Normalizations on caps and anisotropic scales}\label{subapp:E-caps-aniso}
Work in the wide angular region ($|\xi|\sim|\eta|\sim\lambda\gg1$, $\angle(\xi,\eta)\gtrsim\lambda^{-1/2}$) is carried out in a basis adapted to the plane $\mathrm{span}\{\xi,\eta\}$: two angular axes $\rho_1,\rho_2$ and the longitudinal–difference axis $\sigma$ along the sum of frequencies. The natural scales are
\[
  \rho_j\sim \lambda^{1/2}\quad (j=1,2),\qquad \sigma\sim 1 .
\]
The anisotropic renormalization
\begin{equation}\label{eq:E-Slambda}
  S_\lambda := \mathrm{diag}(\lambda^{1/2},\,\lambda^{1/2},\,1),\qquad
  A_e := S_\lambda\,A_{\mathrm{eff}}\,S_\lambda
\end{equation}
makes the effective $3\times3$ minor of the phase Hessian nondegenerate: $\det A_e\gtrsim1$ (equivalently $\det A_{\mathrm{eff}}\gtrsim \lambda^{-2}$). This normalization is consistent with the geometry of the caps $\Theta_{\lambda,\theta}$ of angle $\lambda^{-1/2}$, the tiling (\S\ref{subapp:B-tiles}), and phase IBP (\S\ref{sec:phase}, \S\ref{sec:wp-anis}).

\subsection{The weight \texorpdfstring{$|\nabla\omega|^{-1}$}{|grad omega|^{-1}} in the reduction to a level surface}\label{subapp:E-coarea}
The passage to integration over the level surface
\[
  \Sigma_c:=\{(\xi,\eta):\ |\xi|\sim|\eta|\sim\lambda,\ \omega(\xi,\eta)=c\},\qquad
  \omega(\xi,\eta)=|\xi|+|\eta|-|\xi+\eta|,
\]
introduces the coarea weight $|\nabla\omega|^{-1}$. In the wide region there is a uniform lower bound
\begin{equation}\label{eq:E-grad-lb}
  |\nabla\omega(\xi,\eta)|\ \simeq\ \angle(\xi,\eta)\ \in\ [\lambda^{-1/2},\,1],
\end{equation}
hence $|\nabla\omega|^{-1}$ is bounded above by $\lesssim \lambda^{1/2}$ and on each pair of tiles is absorbed into the amplitude without changing the frequency exponent (see \S\ref{subapp:D-dec}). This reduction is consistent with the application of $\varepsilon$-free decoupling on $\Sigma_c$ and does not affect the exponents of \S\ref{sec:decoupling}.

\subsection{Tiles and the renormalization $S_\lambda$}\label{subapp:E-tiles-rescale}

A cap of angle $\lambda^{-1/2}$ in the annulus $|\xi|\sim\lambda$ is cut into quasi-tiles of sizes
\[
\lambda^{-1/2}\times \lambda^{-1/2}\times \lambda^{-1},
\]
where the two transverse sides are oriented along the angular axes, and the longitudinal one along the flow direction. The linear renormalization in frequency coordinates
\begin{equation}\label{eq:E-Slambda-tiles}
  S_\lambda := \operatorname{diag}\bigl(\lambda^{-1/2},\,\lambda^{-1/2},\,1\bigr)
\end{equation}
brings each tile to unit scale and is compatible with the $\ell^2$–almost orthogonal packet decomposition and with local $T\!T^\ast$ estimates on a single tile (see also \S\ref{subapp:D-dec}). In this geometry the restriction to a rank~3 phase surface and bilinear decoupling are applied at the level of one tile, after which $\ell^2$ patching over tiles is performed without changing the frequency exponent.

\subsection{Normalization of wave packets}\label{subapp:E-packet-norm}

It is convenient to take the mask of the cap $\Theta_{\lambda,\theta}$ in the form
\[
  \sigma_{\lambda,\theta}(\xi)
  := \chi\!\left(\lambda^{-1/2}\Pi^\perp_\theta \xi\right)\,
     \chi\!\left(\lambda^{-1}(\theta\!\cdot\!\xi-\lambda)\right),
\]
where $\Pi^\perp_\theta$ is the orthoprojector onto $\theta^\perp$, and $\chi\in\mathcal S(\mathbb R^3)$ is fixed.
A packet centered at $a=a_\parallel \widehat\theta + a_\perp$ is defined by
\[
  \widehat\phi_{\lambda,\theta,a}(\xi)=e^{-i a\cdot \xi}\,\sigma_{\lambda,\theta}(\xi),\qquad
  \phi_{\lambda,\theta,a}(x)=\lambda^{3/2}e^{i\lambda \widehat\theta\cdot x}\,
  \Psi\!\big(\lambda^{1/2}(x_\perp-a_\perp),\,\lambda(x_\parallel-a_\parallel)\big),
\]
where $\Psi\in\mathcal S$ does not depend on $\lambda,\theta,a$. The normalization is chosen so that
\begin{equation}\label{eq:E-packet-L2}
  \|\phi_{\lambda,\theta,a}\|_{L^2_x}\simeq 1
\end{equation}
uniformly in $\lambda,\theta,a$. This convention is consistent with the almost-orthogonality over centers and caps, as well as with the local anisotropic Strichartz estimate on windows of length $\lambda^{-1/2}$ used in \S\ref{sec:wp-anis} and \S\ref{subapp:D-ibp}.

\subsection{What exactly does \texorpdfstring{$T\!T^\ast$}{TT*} do on windows \texorpdfstring{$|I|=\lambda^{-1/2}$}{|I|=lambda^{-1/2}}}\label{subapp:E-TTstar-window}

For the wave flow $S(t)$ and the angular projector $P_{\lambda,\theta}$ the local estimate on the interval $I=[t_0,t_0+\lambda^{-1/2}]$ has the form
\begin{equation}\label{eq:E-TT-local}
  \|S(\cdot)P_{\lambda,\theta}f\|_{L^6_{t,x}(I)}
  \ \lesssim\ \lambda^{-1/2+o(1)}\,\|P_{\lambda,\theta}f\|_{L^2_x},
\end{equation}
obtained via $T\!T^\ast$ with an anisotropic kernel decaying rapidly in the transverse direction and along characteristics; see \S\ref{sec:wp-anis}. Covering the time axis by intervals of length $\lambda^{-1/2}$ with overlap $O(1)$ and using almost-orthogonality over caps/packets, we obtain the global form
\begin{equation}\label{eq:E-TT-global}
  \|S(\cdot)P_{\lambda}f\|_{L^6_{t,x}}\ \lesssim\ \lambda^{-1/2+o(1)}\,\|P_{\lambda}f\|_{L^2_x},
\end{equation}
without worsening the frequency exponent. It is precisely the factor $\lambda^{-1/2}$ that enters the total phase–time gain (see \S\ref{subapp:E-ibp32}–\S\ref{subapp:E-ibp-split}); it is to be understood as the \emph{temporal} component of “IBP$\times3$’’, not as an additional frequency integration by parts.

\subsection{Heat $\to$ wave bridge on short windows (for App.~\ref{app:heat})}\label{subapp:E-bridge}

Consider the triangular window
\[
I_\Delta:=\{(t,\tau):\ 0\le \tau\le t,\ |t-\tau|\le \lambda^{-1/2}\},
\]
and the kernel
\[
K_\lambda(t,\tau):=\Big(e^{-(t-\tau)\lambda^2}-e^{\,i(t-\tau)\lambda}\Big)\,\mathbf{1}_{I_\Delta}(t,\tau).
\]
Schur’s estimate gives
\begin{equation}\label{eq:E-bridge-Schur}
\sup_{t}\int |K_\lambda(t,\tau)|\,d\tau\ +\ \sup_{\tau}\int |K_\lambda(t,\tau)|\,dt\ \lesssim\ \lambda^{-1/2}.
\end{equation}
Since $\||\nabla|^{-1}P_\lambda\|_{L^2\to L^2}\simeq \lambda^{-1}$, for any $f\in L^2_x$ we obtain
\begin{equation}\label{eq:E-bridge-main}
\Big\|\ |\nabla|^{-1}P_\lambda\!\int K_\lambda(\cdot,\tau)f(\tau)\,d\tau\ \Big\|_{L^2_tL^2_x}
\ \lesssim\ \lambda^{-3/2}\,\|f\|_{L^2_x}.
\end{equation}
Equivalently,
\[
\big\|\,(e^{-(t-\tau)\lambda^2}-e^{\,i(t-\tau)\lambda})P_\lambda f\,\big\|_{L^2_t\dot H^{-1}_x(I_\Delta)}
\ \lesssim\ \lambda^{-3/2}\,\|f\|_{L^2_x}.
\]
Hence, on windows of length $\lambda^{-1/2}$, replacing the heat multiplier by the wave phase leaves a remainder strictly below the target exponent $\lambda^{-1}$ in the $\dot H^{-1}$ norm; all phase–geometric techniques of §§\ref{sec:phase}, \ref{sec:wp-anis} are applicable in the “wave’’ representation (see also App.~\ref{app:astrichartz} and \S\ref{subapp:D-bridge}).

\subsection{Rescaling $u_\lambda$ and the scale of $R_N$ (for §\ref{sec:scaling})}\label{subapp:E-rescale}

For the parabolic rescaling
\[
u_\lambda(t,x):=\lambda\,u(\lambda^2 t,\lambda x)
\]
one has the exact scale identity for the resonant block:
\begin{equation}\label{eq:E-rescale-RN}
\big\|R_N(u_\lambda)(t)\big\|_{\dot H^{-1}}
\ =\ \lambda^{1/2}\,\big\|R_{N/\lambda}\big(u\big)(\lambda^2 t)\big\|_{\dot H^{-1}}.
\end{equation}
The proof is direct: $P_N$ maps to $P_{N/\lambda}$ under the change of variables $x\mapsto \lambda x$, the operator $|\nabla|^{-1}$ contributes a factor $\lambda^{-1}$, and the block $R_N(\cdot)$ is linear–quasilinear in the field and consistent with shifting the frequency index $N\mapsto N/\lambda$. Formula \eqref{eq:E-rescale-RN} matches the scale of the right-hand side of the main estimate and ensures the strict scale invariance of §\ref{sec:scaling}.

\subsection{Block formula for $\det A$ (for §\ref{sec:phase})}\label{subapp:E-block-detA}

In the basis adapted to the interaction plane $(\rho_1,\rho_2,\sigma)$, the second derivatives of the phase assemble into the block matrix
\[
A\ =\
\begin{pmatrix}
B & C\\[2pt]
C^{\!\top} & D
\end{pmatrix},
\qquad B\in\mathbb{R}^{2\times 2},\ C\in\mathbb{R}^{2\times 1},\ D\in\mathbb{R}.
\]
In the wide region ($|\xi|\sim|\eta|\sim\lambda$, $\angle(\xi,\eta)\gtrsim \lambda^{-1/2}$) one has the orders
\[
B\ \simeq\ \lambda^{-1},\qquad \|C\|\ \simeq\ \lambda^{-1/2},\qquad
D_{\mathrm{rad}}:=\partial^2_{\sigma\sigma}\omega\equiv 0.
\]
In particular, the “longitudinal mass’’ arises from the Schur complement,
\[
D_{\mathrm{eff}}:= D - C^{\!\top}B^{-1}C \ \simeq\ 1.
\]
By the Schur formula
\begin{equation}\label{eq:E-block-Schur}
\det A\ =\ \det B\ \cdot\ \big(D - C^{\!\top}B^{-1}C\big)\ =\ \det B\cdot D_{\mathrm{eff}}
\ \simeq\ \lambda^{-2}.
\end{equation}
After the anisotropic renormalization
\[
S=\operatorname{diag}(\lambda^{1/2},\,\lambda^{1/2},\,1),\qquad A_e:=S\,A\,S,
\]
we obtain $\det A_e\gtrsim 1$, which is equivalent to $\det A_{\mathrm{eff}}\gtrsim \lambda^{-2}$ for the effective $3\times3$ minor and is used in the two angular integrations by parts in §§\ref{sec:phase}, \ref{sec:wp-anis}.

\subsection{Why \texorpdfstring{$\partial_t$}{∂t} appears in the formal $D$ in §\ref{sec:phase}}\label{subapp:E-why-D}

The notation with the formal operator
\[
D \ :=\ \rho_1\!\cdot\nabla_{(\xi,\eta)}\ +\ \rho_2\!\cdot\nabla_{(\xi,\eta)}\ +\ \partial_t
\]
serves as a \emph{mnemonic} for the combined gain in the wide region: two frequency IBPs in the angles $(\rho_1,\rho_2)$ plus an \emph{independent} temporal factor extracted by the $T\!T^\ast$ method on windows $|I_\lambda|=\lambda^{-1/2}$. Since $t$ is not a variable of integration in the frequency integral, integration by parts in $t$ is \emph{not} performed. In the actual proof one performs:
\begin{itemize}
  \item two IBPs in $\rho_1,\rho_2$ under the nondegeneracy of the effective minor of the Hessian (see \S\ref{subapp:E-detA}), which yields $\lambda^{-1}$;
  \item a local anisotropic Strichartz estimate via $T\!T^\ast$ on windows of length $\lambda^{-1/2}$ (see \S\ref{subapp:E-TTstar-window}), which yields the additional factor $\lambda^{-1/2}$.
\end{itemize}
In total this gives the phase–time gain $\lambda^{-3/2}$ (see \S\ref{subapp:E-ibp32}, \S\ref{subapp:E-ibp-split}). The formal presence of $\partial_t$ in $D$ should be read precisely in this sense and not as literal IBP in time inside the frequency integral.

\subsection{How to read the phrase “IBP$\times3$’’}\label{subapp:E-IBP3}

The phrase “IBP$\times3$’’ means the sum of \emph{two} angular IBPs in $(\xi,\eta)$ and an \emph{independent} temporal $T\!T^\ast$ block:
\[
\underbrace{\lambda^{-1}}_{\text{two angular IBPs in }\rho_1,\rho_2}\ \cdot\
\underbrace{\lambda^{-1/2}}_{\text{local Strichartz on }|I_\lambda|=\lambda^{-1/2}}
\ =\ \lambda^{-3/2}.
\]
This is neither “triple’’ integration by parts in frequency nor IBP in time under the integral. It is precisely this interpretation that is used in §\ref{sec:phase} and §\ref{sec:wp-anis} and is consistent with the renormalization and block estimate of $\det A$ (see \S\ref{subapp:E-detA}, \S\ref{subapp:E-block-detA}).

\subsection{Sign in the derivative \texorpdfstring{$\partial_v\!\big(-\,|\tau|^{-1}\tau\big)$}{∂\_v(−|τ|^{-1}τ)} (for §\ref{sec:phase})}\label{subapp:E-sign}

Let $\tau:=\xi+\eta$ and $P_\tau^\perp:=I-\widehat\tau\otimes\widehat\tau$, $\widehat\tau=\tau/|\tau|$ (see \S\ref{subapp:E-tau}). Then for any $v\in\mathbb{R}^3$ upon differentiation in the direction $v$ we have
\begin{equation}\label{eq:E-sign}
  \partial_v\!\Big(-\,|\tau|^{-1}\tau\Big)
  \ =\ -\,|\tau|^{-1}\,P_\tau^\perp v.
\end{equation}
Proof of the identity: $\partial_v(\widehat\tau) = |\tau|^{-1}P_\tau^\perp v$, since $\partial_v(\tau)=v$ and $\partial_v(|\tau|)=\widehat\tau\!\cdot v$. Then $\partial_v(-\,\widehat\tau)=-\,|\tau|^{-1}P_\tau^\perp v$, which yields \eqref{eq:E-sign}. Note also that along the radial direction $v\parallel\tau$ the right-hand side of \eqref{eq:E-sign} vanishes, while the working mass in the second derivative in $\tau$ arises via the Schur complement in the block formula for $\det A$ (see \S\ref{subapp:E-block-detA}).

\subsection{Narrow region: working definition and passage to \texorpdfstring{$\dot H^{-1}$}{H^{-1}}}\label{subapp:E-narrow-Hminus}
The working region at frequency $\lambda\gg1$ is
\[
  \mathcal N_{\lambda,\delta}
  :=\bigl\{(\xi,\eta):\ |\xi|\sim|\eta|\sim\lambda,\ \tau:=\xi+\eta,\ |\tau|\le \lambda^{-\delta}\bigr\},
  \qquad \tfrac12<\delta<\tfrac34 .
\]
From $|\tau|\le \lambda^{-\delta}$ it follows that $\pi-\angle(\xi,-\eta)\lesssim \lambda^{-1-\delta}$ (almost collinearity), and the layer has thickness $\lambda^{-\delta}$ around the hyperplane $\{\tau=0\}$; see \S\ref{sec:narrow}.

\paragraph{Definition of the narrow contribution.}
For the frequency component $u_\lambda:=P_\lambda u$ it is convenient to fix
\begin{equation}\label{eq:E-narrow-def}
  R^{\mathrm{nar}}_\lambda(u)
  := P_{\le \lambda^{-\delta}}\ \nabla\!\cdot\!\bigl(u_\lambda\otimes u_\lambda\bigr),
\end{equation}
which is equivalent to the resonant block in the region $|\tau|\le \lambda^{-\delta}$ when $\div u=0$ (see \S\ref{sec:phase}).

\paragraph{Null–form suppression.}
After transferring the divergence and symmetrization, the Fourier symbol is
\[
  \widetilde B_{ij}(\xi,\eta)
  = \frac{\xi_i\xi_j}{|\xi|^2}-\frac{\eta_i\eta_j}{|\eta|^2},\qquad \tau=\xi+\eta,
\]
and in the narrow region it yields the geometric factor
\[
  \|\widetilde B(\xi,\eta)\|\ \lesssim\ \angle\!\Big(\tfrac{\xi}{|\xi|},-\tfrac{\eta}{|\eta|}\Big)
  \ \lesssim\ \frac{|\tau|}{\lambda}\ \lesssim\ \lambda^{-1-\delta}.
\]
Transferring the gradient to $\nabla u_\lambda$ and applying Bernstein on a cap of radius $\lambda$ and angle $\lambda^{-1/2}$, we obtain
\begin{equation}\label{eq:E-narrow-L2}
  \|R^{\mathrm{nar}}_\lambda(u)\|_{L^2_x}
  \ \lesssim\ \lambda^{-1-\delta}\,\|u_\lambda\|_{L^2}\,\|\nabla u_\lambda\|_{L^2}
  \ \simeq\ \lambda^{-1-\delta}\,\|u_\lambda\|_{\dot H^{1/2}}^{\,2}.
\end{equation}

\paragraph{Passage to $\dot H^{-1}$ without the equivalence $L^2\leftrightarrow\dot H^{-1}$.}
We use that $|\nabla|^{-1}\nabla\!\cdot$ acts as an order-zero multiplier in $\dot H^{-1}$, while the null–form provides a factor $|\tau|/\lambda$. At the Fourier level, from \eqref{eq:E-narrow-def} we have
\[
\widehat{R^{\mathrm{nar}}_\lambda}(\tau)
=\mathbf{1}_{\{|\tau|\lesssim \lambda^{-\delta}\}}\,(i\tau)\!\cdot\!\!\int\!
\widetilde B(\xi,\eta)\,\widehat u_\lambda(\xi)\,\widehat u_\lambda(\eta)\,\delta(\tau-\xi-\eta)\,d\xi d\eta.
\]
Then
\[
\|R^{\mathrm{nar}}_\lambda(u)\|_{\dot H^{-1}}
=\big\|\,|\tau|^{-1}\widehat{R^{\mathrm{nar}}_\lambda}(\tau)\,\big\|_{L^2_\tau}.
\]
Transferring the divergence to the high-frequency factor (replacing $i\tau$ by $i\xi$ or $i\eta$ inside the convolution), we obtain for the bilinear symbol
\[
m(\xi,\eta,\tau):=|\tau|^{-1}\,(i\xi)\!\cdot\!\widetilde B(\xi,\eta)
\]
an order-zero bound:
\[
|m(\xi,\eta,\tau)|\ \lesssim\ |\tau|^{-1}\,|\xi|\,\|\widetilde B(\xi,\eta)\|
\ \lesssim\ \frac{\lambda}{|\tau|}\cdot\frac{|\tau|}{\lambda}\ \lesssim\ 1,
\]
since $|\xi|\sim|\eta|\sim\lambda$ and $\|\widetilde B\|\lesssim|\tau|/\lambda$ in the narrow region. Hence (by Cauchy–Schwarz for convolutions)
\[
\|R^{\mathrm{nar}}_\lambda(u)\|_{\dot H^{-1}}
\ \lesssim\ \|u_\lambda\|_{L^2}\,\|u_\lambda\|_{L^2}
\ \simeq\ \lambda^{-1}\,\|u_\lambda\|_{\dot H^{1/2}}^{\,2},
\]
and also the equivalent mixed form
\begin{equation}\label{eq:E-narrow-Hminus}
  \|R^{\mathrm{nar}}_\lambda(u)\|_{\dot H^{-1}}
  \ \lesssim\ \lambda^{-3/2}\,\|u_\lambda\|_{\dot H^{1/2}}\,\|u_\lambda\|_{\dot H^{1}},
\end{equation}
which fixes the stated exponent $\lambda^{-1}$ on one dyad independently of the choice $\delta\in(\tfrac12,\tfrac34)$.

\subsection{Minor arithmetic remarks (for \texorpdfstring{\S\ref{sec:scope}--\S\ref{sec:glue}}{§§})}\label{subapp:E-arith}
Formulas with “products of exponents’’ serve as a \emph{mnemonic} and are not to be read as literal equalities for the same object; the wide and narrow regions are estimated separately, and the outcome on one dyad is taken as the minimum of the two contributions.
\begin{itemize}
  \item Example of reading the table of exponents (\S\ref{sec:glue}): the record
  \[
    \lambda^{-3/2}\cdot\lambda^{-1}\cdot\lambda^{-\delta(6)}=\lambda^{-11/4}\quad(\text{when }\delta(6)=\tfrac14)
  \]
  describes \emph{only} the wide region. The final choice is $\min\{\lambda^{-11/4},\lambda^{-1}\}=\lambda^{-1}$.
  \item Temporal patching on windows $|I_\lambda|=\lambda^{-1/2}$ (\S\ref{subsec:glue-temporal}) yields a factor $(\lambda^{1/2}T)^{1/6}$ in $L^6_{t,x}$, which does not worsen the frequency exponent: $\lambda^{-1/2}\cdot(\lambda^{1/2}T)^{1/6}=\lambda^{-5/12}T^{1/6}$.
\end{itemize}
All such remarks are used only for navigating the exponents and do not change the construction of the proof.

\subsection{On functional classes on \texorpdfstring{$\mathbb{R}^3$}{R\textasciicircum 3}}\label{subapp:E-func-classes}
All steps are performed in a local–frequency form (dyads/caps) and rely on product and commutator rules of Kato--Ponce/Coifman--Meyer type. Global embeddings $\,\dot H^s\hookrightarrow L^\infty\,$ at critical points are not used.
\begin{itemize}
  \item \textbf{Homogeneous/inhomogeneous scales.} Replacing $\dot H^s$ by $H^s$ requires only adding the low-frequency mode $N=1$; the paraproduct technique, as well as the Strichartz/decoupling blocks, remain unchanged (\S\ref{sec:prelim}).
  \item \textbf{Periodic case.} On $\mathbb{T}^3=(2\pi\mathbb{Z})^3$ the quasi-isometry of high frequencies is preserved; spectral discreteness does not affect the tile–cap geometry (\S\ref{sec:scope}).
  \item \textbf{Algebra property.} For $s>3/2$ the space $\dot H^s$ is an algebra. In the paper the threshold $s>5/2$ is fixed to separate analysis from existence questions, aligning \S\ref{subsec:min-suff} with the standard local theory.
\end{itemize}
This setup covers all places in the main text where functional classes are mentioned, without introducing additional global assumptions.

\subsection{Derivatives of amplitudes and “safe’’ losses}\label{subapp:E-amp-deriv}
In all phase–geometric integrals the amplitude is assembled from smooth cutoffs over dyads/caps/tiles. If $\chi$ is a fixed mask of the cap $\Theta_{\lambda,\theta}$, and
\[
  \sigma_{\lambda,\theta}(\xi)=\chi\!\big(\lambda^{-1/2}\Pi_\theta^\perp\xi\big)\,\chi\!\big(\lambda^{-1}(\theta\!\cdot\!\xi-\lambda)\big),
\]
then for derivatives of any fixed order $m$ one has the estimate
\begin{equation}\label{eq:E-amp-deriv}
  \bigl|\partial_\xi^\alpha \sigma_{\lambda,\theta}(\xi)\bigr|
  \ \lesssim_\alpha\ \lambda^{\frac{|\alpha_\perp|}{2}+|\alpha_\parallel|}\,\mathbf{1}_{\{\xi\in 2\Theta_{\lambda,\theta}\}},
  \qquad |\alpha|=m,
\end{equation}
where $\alpha=(\alpha_\perp,\alpha_\parallel)$ is the decomposition into transverse/longitudinal components in the adapted basis $(\rho_1,\rho_2,\sigma)$. The same holds for tile masks of volume $\lambda^{-2}$ (see \S\ref{subapp:E-tiles-rescale}). These estimates show that under two angular IBPs (\S\ref{sec:phase}) the growth from differentiating the amplitude is controlled “safely’’ and does not compensate the phase gain $\lambda^{-1}$, while in the temporal $T\!T^\ast$ block (\S\ref{subapp:E-TTstar-window}) the amplitude is not differentiated at all. Henceforth by a “safe loss’’ we mean factors that are absorbed by a fixed number of differentiations and do not change the stated frequency exponent. \emph{We never use infinite-order IBPs.} See also \S\ref{subapp:E-block-detA} and \S\ref{subapp:E-caps-aniso}.

\subsection{Commutators with dyadic and angular projectors}\label{subapp:E-comm}
Let $P_N$ be a smooth dyadic projector, $P_{N,\theta}$ the angular localization on $\Theta_{N,\theta}$. Then for $f,g\in\mathcal S'(\mathbb R^3)$ the standard commutator estimates of Kato–Ponce / Coifman–Meyer type hold (see \cite{KatoPonce1988,Grafakos2009,bahouri2011fourier}):
\begin{align}
  \|[P_N,f]\,g\|_{L^2} &\ \lesssim\ \|\nabla f\|_{L^\infty}\,\|g\|_{L^2},\label{eq:E-comm-dyad}\\
  \|[P_{N,\theta},f]\,g\|_{L^2} &\ \lesssim\ \bigl(\|\nabla_\perp f\|_{L^\infty}+\|\partial_\parallel f\|_{L^\infty}\bigr)\,\|g\|_{L^2},\label{eq:E-comm-ang}
\end{align}
uniformly in $N$ and $\theta$. In bilinear form this gives
\begin{equation}\label{eq:E-comm-bilin}
  \bigl|\langle (P_{N,\theta}f)\,g - P_{N,\theta}(fg),\,h\bigr|\ \lesssim\ 
  \bigl(\|\nabla f\|_{L^\infty}+\|\nabla g\|_{L^\infty}\bigr)\,\|g\|_{L^2}\,\|h\|_{L^2}.
\end{equation}
Formulas \eqref{eq:E-comm-dyad}–\eqref{eq:E-comm-bilin} are used when transferring the gradient in the narrow region (see \S\ref{subapp:E-narrow-Hminus}) and in the symmetrization of the resonant block \eqref{eq:def-RN}. In all applications the commutator losses do \emph{not} worsen the target exponent, as they are controlled by the working norms $\|u\|_{\dot H^{1/2}}$ and $\|u\|_{\dot H^{1}}$.

\subsection{On constants, $o(1)$, and the notation \texorpdfstring{$\lesssim_\varepsilon$}{\textbackslash lesssim\_\{\textbackslash varepsilon\}}}\label{subapp:E-notation}
Everywhere the constants in the notation $A\lesssim B$ do not depend on the frequency $\lambda$ and on time, but may depend on a fixed number of derivatives of amplitudes/masks (see \S\ref{subapp:E-amp-deriv}). The notation $A\lesssim_\varepsilon B$ means that the constant may depend on $\varepsilon>0$, and the standard $\varepsilon$–absorbing shorthand $X^{\pm}=X^{\pm\varepsilon}$ is allowed. The parameter $o(1)$ is understood as a quantity tending to zero as $\lambda\to\infty$ \emph{uniformly} over all other fixed parameters of the discretization (angle, tile index, time window). Such $o(1)$ arise only in the decoupling component under minimal curvature (see \S\ref{subapp:E-delta6} and App.~\ref{app:decoupling}); they do not affect the dyadic outcome and the convergence of the sums over $\lambda$ (\S\ref{sec:glue}).

\newpage
\cleardoublepage
\phantomsection
\addcontentsline{toc}{section}{References}
\bibliographystyle{unsrt}
\bibliography{commutator_refs}
\end{document}